\definecolor{Cobalt}{rgb}{0.13,0.55,0.13}
\DeclareMathOperator{\sech}{sech}
\DeclareMathOperator{\ess}{ess}
\newtheorem{thm}{Theorem}[section]
\newtheorem{lem}{Lemma}[section]
\newtheorem{den}{Definition}[section]
\newtheorem{ex}{Example}[section]
\numberwithin{equation}{section}
\begin{document}


 \title{Unconditional convergence of conservative spectral Galerkin methods for the coupled fractional nonlinear Klein-Gordon-Schr\"odinger equations}

\author{Dongdong Hu$^{1}$, \quad Yayun Fu$^{2}$, \quad Wenjun Cai$^{3}$,\quad Yushun Wang$^{3}$\footnote{{Correspondence author.}
Email addresses:{~hudongdong@jxnu.edu.cn (Dongdong Hu),} {~fyyly@xcu.edu.cn (Yayun Fu),} {~caiwenjun@njnu.edu.cn (Wenjun Cai),} {~wangyushun@njnu.edu.cn (Yushun Wang).}}\\{\small ${}^1$~School of Mathematics and Statistics, Jiangxi Normal University, Nanchang 330022, China}\\{\small ${}^2$~School of Science, Xuchang University, Xuchang 461000, China}\\{\small ${}^3$~School of Mathematical Sciences,  Nanjing Normal University,  Nanjing 210023, China}
}
\date{}
\maketitle
\begin{abstract}
In this work, two novel classes of structure-preserving spectral Galerkin methods are proposed which based on the Crank-Nicolson scheme and the exponential scalar auxiliary variable method respectively, for solving the coupled fractional nonlinear Klein-Gordon-Schr\"odinger equation. The paper focuses on the theoretical analyses and computational efficiency of the proposed schemes, the Crank-Nicoloson scheme is proved to be unconditionally convergent and has the maximum-norm boundness of numerical solutions. The exponential scalar auxiliary variable scheme is linearly implicit and decoupled, but lack of the maximum-norm boundness, also, the energy structure has been modified. Subsequently, the efficient implementations of the proposed schemes are introduced in detail. Both the theoretical analyses and the numerical comparisons show that the proposed spectral Galerkin methods have high efficiency in long-time computations.
\\[2ex]
\textbf{AMS subject classification:} 26A33, 70H0, 65M125, 65N35\\[2ex]
\textbf{Keywords:} Riesz fractional derivative; Spectral Galerkin method; Structure-preserving algorithm; Unique solvability; Convergence
\end{abstract}


 \vskip 5mm
\section{Introduction}
Fractional differential equations have been widely applied in many fields due to the nonlocal properties of fractional calculus, they can be employed to simulate some complex anomalous~(super)~diffusion phenomena which cannot be simulated by classical differential equations, and gained many researches and considerations by physicists and mathematicians, such that there has created extensive excellent theoretical achievements and realistic value. For instance, Laskin \cite{LaskinPHY2000} proposed a class of space-fractional nonlinear Schr\"odinger equation by adopting $\alpha$-stable L\'evy process instead of Brownian process, which plays an essential role in quantum mechanics, water wave dynamics and optics \cite{water,Opt} \emph{etc.} Almost simultaneously, Alfimov and V\'azquez \emph{et al.} \cite{Alfimov} established a kind of space-fractional nonlinear wave equation, which is utilized to describe the propagation of fluxons in Josephson junctions.  Recently, the study on the well-posedness of solutions of nonlinear partial differential equations are complex and essential. In particular, due to the nonlocal properties of fractional operators, there exist many challenging problems on the study of analytical solutions of fractional differential equations \cite{kdv2020JDE,decasy}. Therefore, it is essential to develop some efficient and stable numerical methods for fractional differential equations \cite{wangpd2,zhaox,zengf,Ainsworth2017SIAM,JEM2020ANM,fuhf2019JCP}. Interestingly, a special dynamical ststem, called as the Klein-Gordon-Schr\"odinger equation which consists of the Schr\"odinger equation and the wave equation, has attracted considerable attentation in recent years \cite{zhang2021JSC,wang1999JMP,lu2001JDE}. In this paper, we focus on effective spectral Galerkin methods for the following strong coupled fractional nonlinear Klein-Gordon-Schr\"odinger (FNKGS) equation:
\begin{align}
&\textrm{i} u_t-\frac{\lambda}{2}(-\Delta)^{\frac{\alpha}{2}}u+\kappa_1 u\phi+2\kappa_2 |u|^2u\phi=0\quad \textrm{in}\quad \Omega\times (0,T],\label{equ1}\\
&\phi_{tt}+\gamma(-\Delta)^{\frac{\alpha}{2}}\phi+\eta^2 \phi-\kappa_1 |u|^2-\kappa_2 |u|^4=0\quad \textrm{in}\quad \Omega\times (0,T],\label{equ2}
\end{align}
subject to the initial-boundary conditions
\begin{align}
&u({ x},0)=u_0({ x}),\quad \phi({ x},0)=\phi_0({ x}),\quad \phi_t(x,0)=\phi_1(x)\quad \textrm{in}\quad \Omega,\label{equ3}\\
& u(x,t)=0,\quad\phi(x,t)=0\quad \textrm{on}\quad \partial\Omega\times (0,T].\label{equ4}
\end{align}
The system \eqref{equ1}--\eqref{equ4} is the fractional model describing the dynamics of conserved complex
nucleon fields $u$ interacting with neutral real scalar meson fields $\phi$ in the bounded domain $\Omega=(a,b)$, where $1<\alpha<2$, $\textrm{i}^2 =-1$, $\lambda,\kappa_1,\kappa_2,\gamma\geq 0$ and $\eta\in\mathbb{R}$ are any given constants, $u_0(x), \phi_1(x)$ and $\phi_1(x)$ are given smooth functions. The Riesz fractional derivative is represented by
\begin{align*}
(-\Delta)^{\frac{\alpha}{2}}u=\frac{1}{2\cos(\frac{\alpha\pi}{2})}\Big( {}_{a}^{RL}\!D^{{\alpha}}_{x}+{}_{x}^{RL}\!D^{{\alpha}}_{b} \Big)u,
\end{align*}
where the left and right Riemann-Liouville fractional derivatives are defined as
\begin{align*}
{}_{a}^{RL}\!D^{{\alpha}}_{x}u=\frac{1}{\Gamma(2-{\alpha})}\frac{\partial^2}{\partial x^2}\int_{a}^{x}\frac{u(\xi,t)d\xi}{(x-\xi)^{{\alpha}-1}},\qquad
{}_{x}^{RL}\!D^{{\alpha}}_{b}u=\frac{1}{\Gamma(2-{\alpha})}\frac{\partial^2}{\partial x^2}\int_{x}^{b}\frac{u(\xi,t)d\xi}{(\xi-x)^{{\alpha}-1}},
\end{align*}
in which $\Gamma(z)=\int^{\infty}_0 x^{z-1}e^{-x}dx$.

It is worth noticing that the solutions of FNKGS equation conserve the mass of the nucleon field as follows:
\begin{align*}
\mathcal{M}(t)=\int_{\Omega} |u|^2dx=\mathcal{M}(0),\quad t\in (0,T],
\end{align*}
and the total energy as follows:
\begin{align*}
\mathcal{E}(t)=
\int_{\Omega}(\phi_t)^2+\gamma{\phi}(-\Delta)^{\frac{\alpha}{2}}\phi+\eta^2 \phi^2+\lambda\bar{u}(-\Delta)^{\frac{\alpha}{2}}u-(2\kappa_1|u|^2+2\kappa_2|u|^4)\phi dx=\mathcal{E}(0),\quad t\in (0,T],
\end{align*}
where $\bar{u}$ is the complex conjugate of $u$.

Our object is to investigate efficient numerical schemes and the associated convergence analyses of the FNKGS equation. The main difficulties of the studies on numerical schemes for the FNKGS equation can be summarized as follows: $(1)$ The proposed numerical algorithms should preserve the inherent physical invariants. $(2)$ The unconditional convergence of numerical methods should be obtained without any Lipschitz continuity restrictions on nonlinear terms of original system. $(3)$ The numerical scheme should be linearly implicit and decoupled, also, the convergence can be obtained. Based on the aforementioned issues, we try to construct a numerical scheme which enjoys high-order accuracy in space and high efficiency in time, for solving the FNKGS equation to overcome some of the above challenges.

As is known to us, structure-preserving algorithm is famous for preserving the inherent invariants of nonlinear conservative systems. In generally, traditional numerical algorithms are proposed to solve the conservative system, which often ignore the intrinsic conservative structure of the system. Although they has the same stable computational ability as the structure-preserving algorithms in a short time, the latter are superior in long-time simulations because they can inherit the intrinsic physical characteristics of a given dynamical system, please refer to \cite{fengk2010book,wang2008local} and references therein. Undoubtedly, time-stepping method is the key of the construction of structure-preserving algorithms. For classical examples, the averaged vector field method \cite{AVF2008JPAMT}, the symplectic Runge-Kutta method \cite{Hairer2006book}, the Hamiltonian boundary value method \cite{HBVMs2016FL} and so on have catched much attention in recent years. However, the above methods share a common drawback that the numerical implementations are usually fully implicit and the convergence analyses are hard to obtain. In the past decades, some novel tools are developed to provide the possibilities for constructing the high-order linearly implicit structure-preserving methods. For instances, readers can refer to the invariant energy quadratization (IEQ) method \cite{IEQ}, the scalar auxiliary variable (SAV) method \cite{SAV}, the exponential scalar auxiliary variable (ESAV) method \cite{ESAV} and so forth, which possess a modified energy function. In recent years, there exist some numerical researches on the FNKGS equation. For examples, Fu and Cai \emph{et al.} \cite{fu2020ANM} proposed a conservative scheme by combining the partitioned averaged vector field method and pseudo spectral method. The numerical scheme is proved to be linearly implicit and decoupled, but the convergence analysis is unacquirable. Li and Huang \emph{et al.} \cite{liNA} derived a class of structure-preserving scheme with the leapfrog method and standard finite element method. Although the convergence is obtained, the energy structure of the proposed method is modified and it suffers a low-order accuracy in space. Simultaneously, Shi and Ma \emph{et al.} \cite{shiy} constructed a kind of numerical scheme by using the leapfrog method and center difference method, which is linearly implicit and unconditionally convergent, but the energy structure is still modified and the long-time conservation of the proposed scheme is less efficient.

The purpose of this paper is to construct a conservative Crank-Nicolson spectral Galerkin method (CN-SGM) which enjoys the original mass and energy structures as well as spectral accuracy in space for the FNKGS equation. Meanwhile, we obtain the unconditional convergence analysis and unique solvability. It is worth to noticing that the existing theoretical result \cite{Boulenger2016JFA} shows that the solution of the FNKGS equation blows up in finite time if the initial energy $\mathcal{E}(0)<0$. For numerical comparisons, we propose another conservative numerical scheme by combining the exponential scalar auxiliary variable method and spectral Galerkin method (ESAV-SGM). Extensive numerical results illustrate two numerical phenomena: (1) The ESAV-SGM is linearly implicit and decoupled, it possesses the higher computational efficiency than CN-SGM. (2) Although the CN-SGM is fully implicit and coupled, it is more accurate and enjoys the superior capabillty of capturing blow-up than CN-SGM when $\mathcal{E}(0)<0$.

The rest of this paper are arranged as follows: In Section \ref{pre}, some essential definitions and lemmas of the Galerkin method are introduced. In Section \ref{cngls}, a class of structure-preserving numerical method is constructed by combining the spectral Galerkin method and the Crank-Nicolson method. Then the invariant conservations and unconditional convergence are proved. In section \ref{esavsgm}, a linearly implicit and decoupled ESAV-SGM is derived for numerical comparison. Subsequently, the detailed numerical implementations of the proposed schemes are offered in Section \ref{imple}. Next, numerical results are reported to illustrate the high efficiency of the proposed schemes in Section \ref{numerexper}. Finally, some conclusions are drawed in Section \ref{conclu}.


\section{Preliminaries}\label{pre}
In this section, we introduce some essential definitions and lemmas of Galerkin methods which can be found in Ref. \cite{roop}.
\begin{den}
Define the inner product, $L^p$ norm $(1\leq p< \infty)$ and $L^\infty$ norm as
\begin{align*}
(u,v):=\int_\Omega \bar{v}udx,\quad \Big\|u\Big\|:=\sqrt{\Big(u,u\Big)},\quad \Big\| u \Big\|_p:=\sqrt[p]{\int_\Omega | u |^pdx},\quad \Big\| u \Big\|_\infty:=\ess \underset{x\in\Omega}{\sup}\Big\{ |u(x)| \Big\}.
\end{align*}
\end{den}

%

\begin{den}[\textbf{Symmetric fractional derivative space} ]\label{symmetric}
For $\mu>0$, where $\mu\neq n-1/2, n\in\mathbb{N}$, define the semi-norm and the norm as
\begin{align*}
\Big|u\Big|_{J_S^\mu(\Omega)}:= \Big|\Big( {}_{a}^{RL}\!D^{\mu}_{x}u,{}_{x}^{RL}\!D^{\mu}_{b}u \Big)\Big|^{1/2},\quad \Big\|u\Big\|_{J_S^\mu(\Omega)}:=\Big(\Big\|u\Big\|^2+\Big|u\Big|^2_{J_S^\mu(\Omega)}\Big)^{1/2}.
\end{align*}
$J_S^\mu(\Omega)$ (or $J_{S,0}^\mu(\Omega)$) denotes the closure of $C^\infty(\Omega)$ (or $C_0^\infty(\Omega)$) with respect to $\Big\|\cdot\Big\|_{J_S^\mu(\Omega)}$.
\end{den}

\begin{den}[\textbf{Fractional Sobolev space} ]
For $\mu>0$, define the semi-norm and the norm as
\begin{align*}
\Big|u\Big|_{H^\mu(\Omega)}:=\Big\| |\xi|^\mu\mathcal{F}[u,\xi] \Big\|,\quad \Big\|u\Big\|_{H^\mu(\Omega)}:=\Big(\Big\|u\Big\|^2+\Big|u\Big|^2_{H^\mu(\Omega)}\Big)^{1/2}.
\end{align*}
$H^\mu(\Omega)$ (or $H_0^\mu(\Omega)$) denotes the closure of $C^\infty(\Omega)$ (or $C_0^\infty(\Omega)$) with respect to $\Big\|\cdot\Big\|_{H^\mu(\Omega)}$, and $\mathcal{F}$ represents the Fourier transform.
\end{den}

\begin{lem}\label{equivalent}
Suppose $\mu>0$ and $\mu\neq n-1/2, n\in\mathbb{N}$. Then the spaces $J_S^\mu(\Omega)$ and $H^\mu(\Omega)$
are equal with equivalent semi-norms and norms, and the spaces $J_{S,0}^\mu(\Omega)$ and $H_0^\mu(\Omega)$ are equal with equivalent semi-norms and norms.
\end{lem}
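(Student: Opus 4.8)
The plan is to reduce the whole statement to the line $\mathbb{R}$ and to exploit the Fourier-multiplier representation of the left and right Riemann--Liouville derivatives. I would begin with the compactly supported case. For $u\in C_0^\infty(\Omega)$ let $\tilde u$ be its extension by zero to $\mathbb{R}$; then $\tilde u\in C_0^\infty(\mathbb{R})$, and since $\tilde u$ vanishes to the left of $\Omega$ and to the right of $\Omega$ one checks that ${}_{a}^{RL}D_x^{\mu}u={}_{-\infty}D_x^{\mu}\tilde u$ and ${}_{x}^{RL}D_b^{\mu}u={}_{x}D_{\infty}^{\mu}\tilde u$ on $\Omega$, while at every point of $\mathbb{R}\setminus\Omega$ at least one of the two one-sided derivatives of $\tilde u$ is identically zero. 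Hence $({}_{a}^{RL}D_x^{\mu}u,{}_{x}^{RL}D_b^{\mu}u)_{L^2(\Omega)}=({}_{-\infty}D_x^{\mu}\tilde u,{}_{x}D_{\infty}^{\mu}\tilde u)_{L^2(\mathbb{R})}$, and likewise $|u|_{H^{\mu}(\Omega)}=|\tilde u|_{H^{\mu}(\mathbb{R})}$. So it is enough to prove the seminorm equivalence on $C_0^\infty(\mathbb{R})$; adding $\|u\|^2$ then gives the norm equivalence.

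On $\mathbb{R}$ I would use the standard symbols $\mathcal{F}[{}_{-\infty}D_x^{\mu}\tilde u,\xi]=(\mathrm{i}\xi)^{\mu}\mathcal{F}[\tilde u,\xi]$ and $\mathcal{F}[{}_{x}D_{\infty}^{\mu}\tilde u,\xi]=(-\mathrm{i}\xi)^{\mu}\mathcal{F}[\tilde u,\xi]$, obtained by writing the Riemann--Liouville derivative as an integer-order derivative composed with a Riemann--Liouville fractional integral and Fourier-transforming the integral kernel. Since $|(\mathrm{i}\xi)^{\mu}|=|(-\mathrm{i}\xi)^{\mu}|=|\xi|^{\mu}$, Parseval's identity gives $\|{}_{-\infty}D_x^{\mu}\tilde u\|=\|{}_{x}D_{\infty}^{\mu}\tilde u\|=|\tilde u|_{H^{\mu}(\mathbb{R})}$, and for the cross term
\begin{align*}
\big({}_{-\infty}D_x^{\mu}\tilde u,\,{}_{x}D_{\infty}^{\mu}\tilde u\big)=\int_{\mathbb{R}}(\mathrm{i}\xi)^{\mu}\,\overline{(-\mathrm{i}\xi)^{\mu}}\,\big|\mathcal{F}[\tilde u,\xi]\big|^2\,d\xi ,
\end{align*}
whose real part equals $\cos(\mu\pi)\,|\tilde u|_{H^{\mu}(\mathbb{R})}^2$, because $\mathrm{Re}\big((\mathrm{i}\xi)^{\mu}\overline{(-\mathrm{i}\xi)^{\mu}}\big)=\cos(\mu\pi)|\xi|^{2\mu}$ for every $\xi\neq0$. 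Combining $|z|\ge|\mathrm{Re}\,z|$ with the Cauchy--Schwarz bound $|({}_{-\infty}D_x^{\mu}\tilde u,{}_{x}D_{\infty}^{\mu}\tilde u)|\le\|{}_{-\infty}D_x^{\mu}\tilde u\|\,\|{}_{x}D_{\infty}^{\mu}\tilde u\|$ then yields
\begin{align*}
\big|\cos(\mu\pi)\big|\;|u|_{H^{\mu}(\Omega)}^2\;\le\;|u|_{J_S^{\mu}(\Omega)}^2\;\le\;|u|_{H^{\mu}(\Omega)}^2 .
\end{align*}
This is exactly where the hypothesis $\mu\neq n-1/2$ is used: it forces $\cos(\mu\pi)\neq0$, so both inequalities are nontrivial and the two seminorms, hence the two norms, are equivalent on $C_0^\infty(\Omega)$. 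Passing to closures yields $J_{S,0}^{\mu}(\Omega)=H_0^{\mu}(\Omega)$ with equivalent seminorms and norms.

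For the spaces without the zero boundary condition the zero extension is no longer admissible, so instead I would invoke a bounded linear extension operator $E\colon H^{\mu}(\Omega)\to H^{\mu}(\mathbb{R})$ (available since $\mu$ is not a half-integer), whose right inverse is restriction: applying the real-part identity to $Eu$ and restricting back to $\Omega$ controls $|u|_{J_S^{\mu}(\Omega)}^2$ from below by $c\,|u|_{H^{\mu}(\Omega)}^2$, while Cauchy--Schwarz again furnishes the matching upper bound, and density of $C^\infty(\overline{\Omega})$ upgrades the estimate to the identification $J_S^{\mu}(\Omega)=H^{\mu}(\Omega)$ with equivalent norms. I expect the main obstacles to be precisely these bounded-domain bookkeeping points: verifying the pointwise vanishing of the one-sided derivatives of $\tilde u$ outside $\Omega$ so that the $L^2(\Omega)$ inner product coincides with the $L^2(\mathbb{R})$ one, making the extension argument for the non-homogeneous spaces rigorous near the endpoints $a,b$, and confirming that all the Fourier-multiplier identities and the real-part computation remain valid for complex-valued $u$ — which they do, since only $|\mathcal{F}[\tilde u,\xi]|^2$ enters. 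The interior Fourier computation itself is routine once the symbols are in hand.
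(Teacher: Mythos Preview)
The paper does not supply its own proof of this lemma: it is stated in Section~\ref{pre} as a preliminary result quoted from Ervin and Roop~\cite{roop}. Your argument --- zero extension to $\mathbb{R}$, the Fourier symbols $(\mathrm{i}\xi)^{\mu}$ and $(-\mathrm{i}\xi)^{\mu}$ for the one-sided Riemann--Liouville derivatives, and the computation $\mathrm{Re}\big((\mathrm{i}\xi)^{\mu}\overline{(-\mathrm{i}\xi)^{\mu}}\big)=\cos(\mu\pi)\,|\xi|^{2\mu}$ leading to the two-sided bound $|\cos(\mu\pi)|\,|u|_{H^{\mu}}^2\le |u|_{J_S^{\mu}}^2\le |u|_{H^{\mu}}^2$ --- is exactly the route taken in that reference, so your proposal is correct and coincides with the proof the paper is implicitly invoking.
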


\begin{lem}
 Suppose $1 < \mu< 2$. For any $u\in H_0^\mu(\Omega)$ and $v\in H_0^{\mu/2}(\Omega)$, we have
 \begin{align*}
 \Big( {}_{a}^{RL}\!D^{\mu}_{x}u,v \Big)= \Big( {}_{a}^{RL}\!D^{\mu/2}_{x}u,{}_{x}^{RL}\!D^{\mu/2}_{b}v \Big),\quad \Big( {}_{x}^{RL}\!D^{\mu}_{b}u,v \Big)= \Big( {}_{x}^{RL}\!D^{\mu/2}_{b}u,{}_{a}^{RL}\!D^{\mu/2}_{x}v \Big).
 \end{align*}
\end{lem}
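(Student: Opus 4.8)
The plan is to reduce the identities to smooth, compactly supported functions and then pass to the Fourier side, where the Riemann--Liouville operators act as multiplication by the symbols $(i\xi)^{s}$ (left) and $(-i\xi)^{s}$ (right); the two sides of each identity will then be compared symbol-by-symbol.

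First I would use density. Since $H_0^\mu(\Omega)$ and $H_0^{\mu/2}(\Omega)$ are by definition the closures of $C_0^\infty(\Omega)$ in their respective norms, and since — combining Lemma~\ref{equivalent} with the mapping properties of Riemann--Liouville operators recorded in \cite{roop} — both bilinear forms $(u,v)\mapsto\big({}_{a}^{RL}\!D^{\mu}_{x}u,v\big)$ and $(u,v)\mapsto\big({}_{a}^{RL}\!D^{\mu/2}_{x}u,{}_{x}^{RL}\!D^{\mu/2}_{b}v\big)$ are bounded on $H_0^\mu(\Omega)\times H_0^{\mu/2}(\Omega)$, it is enough to prove both equalities for $u,v\in C_0^\infty(\Omega)$ and then pass to the limit.

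Next I would extend $u$ and $v$ by zero to all of $\mathbb{R}$. Because $u$ vanishes near $a$ and $v$ vanishes near $b$, on $\Omega$ one has ${}_{a}^{RL}\!D^{s}_{x}u={}_{-\infty}^{RL}\!D^{s}_{x}u$ and ${}_{x}^{RL}\!D^{s}_{b}v={}_{x}^{RL}\!D^{s}_{+\infty}v$ for every $s>0$, and the $L^2(\Omega)$ pairings in the statement coincide with the corresponding $L^2(\mathbb{R})$ pairings (the relevant integrands are supported in $\overline\Omega$). On the line one has the multiplier identities $\mathcal{F}[{}_{-\infty}^{RL}\!D^{s}_{x}w](\xi)=(i\xi)^{s}\widehat w(\xi)$ and $\mathcal{F}[{}_{x}^{RL}\!D^{s}_{+\infty}w](\xi)=(-i\xi)^{s}\widehat w(\xi)$, with $(\pm i\xi)^{s}:=|\xi|^{s}e^{\pm i\pi s\,\mathrm{sgn}(\xi)/2}$, and the transformed functions lie in $L^2(\mathbb{R})$ (they decay like $|x|^{-s-1}$ at infinity), so Parseval's identity gives
\[
\big({}_{a}^{RL}\!D^{\mu}_{x}u,v\big)=c\!\int_{\mathbb{R}}(i\xi)^{\mu}\,\widehat u\,\overline{\widehat v}\,d\xi,\qquad
\big({}_{a}^{RL}\!D^{\mu/2}_{x}u,{}_{x}^{RL}\!D^{\mu/2}_{b}v\big)=c\!\int_{\mathbb{R}}(i\xi)^{\mu/2}\,\overline{(-i\xi)^{\mu/2}}\,\widehat u\,\overline{\widehat v}\,d\xi
\]
with the same normalisation constant $c$. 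The two integrands agree because $(i\xi)^{\mu/2}\,\overline{(-i\xi)^{\mu/2}}=|\xi|^{\mu/2}e^{i\pi\mu\,\mathrm{sgn}(\xi)/4}\,|\xi|^{\mu/2}e^{i\pi\mu\,\mathrm{sgn}(\xi)/4}=|\xi|^{\mu}e^{i\pi\mu\,\mathrm{sgn}(\xi)/2}=(i\xi)^{\mu}$, which establishes the first identity. The second one follows by the identical computation with the roles of the left and right operators interchanged (alternatively, by applying the first identity after the reflection $x\mapsto a+b-x$, which exchanges the two families of operators and preserves the inner products).

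The hard part will be the bookkeeping of the complex powers: one must fix the principal branch of $z^{s}$, justify the Fourier-multiplier formulas for the line operators (via the Fourier transform of $t_{+}^{s-1}/\Gamma(s)$ together with the semigroup property of fractional integration) and verify the $L^2(\mathbb{R})$ integrability that legitimises Parseval. If one prefers to avoid branch cuts, an equivalent route writes ${}_{a}^{RL}\!D^{\mu}_{x}=\tfrac{d^2}{dx^2}\,{}_{a}I^{2-\mu}_{x}$ with ${}_{a}I^{2-\mu}_{x}={}_{a}I^{1-\mu/2}_{x}{}_{a}I^{1-\mu/2}_{x}$ (semigroup property of the fractional integral ${}_{a}I$), uses the Fubini-type adjointness $({}_{a}I^{\sigma}_{x}f,g)=(f,{}_{x}I^{\sigma}_{b}g)$, and integrates by parts twice, the boundary contributions vanishing because $u,v\in C_0^\infty(\Omega)$; there the delicate point is instead the careful tracking of those boundary terms.
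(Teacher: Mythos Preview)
Your argument is correct. The paper does not give its own proof of this lemma; it is simply quoted from Ervin--Roop \cite{roop}, and your Fourier-analytic route---reduce to $u,v\in C_0^\infty(\Omega)$ by density, extend by zero to $\mathbb{R}$ so that the finite-interval Riemann--Liouville operators coincide with their whole-line counterparts, invoke the multiplier identities $\mathcal{F}[{}_{-\infty}^{RL}D^{s}_{x}w](\xi)=(i\xi)^{s}\widehat w(\xi)$ and $\mathcal{F}[{}_{x}^{RL}D^{s}_{+\infty}w](\xi)=(-i\xi)^{s}\widehat w(\xi)$, and compare symbols via Parseval---is exactly the mechanism underlying the proof in that reference. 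Your check that the product of the half-order derivatives has support in $\overline\Omega$ (so the $L^{2}(\Omega)$ and $L^{2}(\mathbb{R})$ pairings agree) and your symbol computation $(i\xi)^{\mu/2}\,\overline{(-i\xi)^{\mu/2}}=(i\xi)^{\mu}$ are both sound, and the alternative semigroup/adjointness route you sketch at the end is the other standard way this is done in the literature.
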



\section{Crank--Nicolson spectral Galerkin method}\label{cngls}
Define the following bilinear forms
\begin{align*}
\mathcal{B}(u,w)=&\frac{1}{2\cos(\frac{\alpha\pi}{2})}\Big( ( {}_{a}^{RL}\!D^{\alpha/2}_{x}u,{}_{x}^{RL}\!D^{\alpha/2}_{b}w )+( {}_{x}^{RL}\!D^{\alpha/2}_{b}u,{}_{a}^{RL}\!D^{\alpha/2}_{x}w ) \Big),
\quad \forall u,w\in H_0^{\alpha/2}(\Omega).
\end{align*}
For simplicity, we introduce the semi-norms and the norms as
\begin{align*}
\Big|u\Big|_{\alpha/2}:=\sqrt{\mathcal{B}(u,u)},\qquad \Big\|u\Big\|_{\alpha/2}:=\sqrt{ \Big\|u\Big\|^2+\Big|u\Big|^2_{\alpha/2}}.
\end{align*}
We observe from Lemma \ref{equivalent} that $|\cdot|_{\alpha/2}$ and $\|\cdot\|_{\alpha/2}$ are equivalent with the semi-norms and norms of $J_S^\mu(\Omega)$ and $H^\mu(\Omega)$. From Ref. \cite{roop}, the bilinear form $\mathcal{B}(u,w)$ has the continuous and coercive properties, i.e., there exist positive constants
$C_1$ and $C_2$, for any $u,w\in H_0^{{\alpha/2}}(\Omega)$, such that
\begin{align*}
|\mathcal{B}(u,w)|\leq C_1 \Big\|u\Big\|_{\alpha/2}\Big\|w\Big\|_{\alpha/2},\qquad |\mathcal{B}(u,u)|\geq C_2 \Big\|u\Big\|_{\alpha/2}^2.
\end{align*}

Based on Lemma \ref{symmetric} and the definition of Riesz fractional derivative,
 the weak form of \eqref{equ1} reads: finding $u,\phi\in H_0^{{\alpha/2}}(\Omega)$, such that
\begin{align}
&\textrm{i} (u_t,w)-\frac{\lambda}{2}\mathcal{B}(u,w)+\kappa_1 (u\phi,w)+2\kappa_2 (|u|^2u\phi,w)=0,\quad \forall w \in H_0^{{\alpha/2}}(\Omega),\label{weak1}\\
&(\phi_{tt},w)+\gamma\mathcal{B}(\phi,w)+\eta^2 (\phi,w)-\kappa_1 (|u|^2,w)-\kappa_2 (|u|^4,w)=0,\quad \forall w \in H_0^{{\alpha/2}}(\Omega),\label{weak2}
\end{align}
with the initial conditions given by
\begin{align*}
u(x,0)=u_0(x),\quad \phi(x,0)=\phi_0(x),\quad \phi_t(x,0)=\phi_1(x).
\end{align*}

\subsection{Fully discrete scheme}

Introduce $\psi= \phi_t$ to reformulate \eqref{weak1}--\eqref{weak2} into an equivalent system as follows, i.e., finding $u,\phi,\psi\in H_0^{{\alpha/2}}(\Omega)$, for any $w \in H_0^{{\alpha/2}}(\Omega)$, such that
\begin{align}
&\textrm{i} (u_t,w)-\frac{\lambda}{2}\mathcal{B}(u,w)+\kappa_1 (u\phi,w)+2\kappa_2 (|u|^2u\phi,w)=0,\label{weak1-1}\\
&(\phi_t,w)=(\psi,w),\label{weak1-2}\\
&(\psi_{t},w)+\gamma\mathcal{B}(\phi,w)+\eta^2 (\phi,w)-\kappa_1 (|u|^2,w)-\kappa_2 (|u|^4,w)=0.\label{weak1-3}
\end{align}

For simplicity, we introduce the following notations for $n=0,1,\cdots,N_t$,
\begin{align*}
u^n=u(x,t_n),\quad \delta_tu^{n+\frac12}=\frac{u^{n+1}-u^n}{\tau},\quad u^{n+\frac12}=\frac{u^{n+1}+u^n}{2},\quad \widetilde{u}^{n+\frac12}=\frac{3u^n-u^{n-1}}{2},
\end{align*}
where $t_n=n\tau$, and $\tau=T/N_t$ is the time step size. We denote $u^n$ and $U_N^n$ as the exact solution and numerical approximation at $t=t_n$, respectively.
For theoretical analyses, assume that
 \begin{align}\label{exactassume}
 \underset{0\leq t\leq T}{\max}\Big\{ \Big\|u\Big\|,\Big|u\Big|_{\alpha/2},\Big\|u\Big\|_\infty
 ,\Big\|\phi\Big\|,\Big|\phi\Big|_{\alpha/2},\Big\|\phi\Big\|_\infty,\Big\|\psi\Big\| \Big\}\leq \mathcal{C}.
 \end{align}
Without loss of generality, let $\mathcal{C}$ be a general positive constant which is independent of $\tau$ and $N$. For a fixed positive integer $N$, denote $P_N(\Omega)$ to be the polynomial space with the degree no more than $N$ in interval $\Omega$. The
approximation space $X_N^0(\Omega)$ is defined as
\begin{align*}
X_N^0(\Omega)=P_N(\Omega)\cap H_0^{{\alpha/2}}(\Omega).
\end{align*}
It is clear that $X_N^0(\Omega)$ is a subspace of $H_0^{{\alpha/2}}(\Omega)$.
By using the modified Crank--Nicolson scheme for temporal derivative in \eqref{weak1-1}-\eqref{weak1-3},
the Crank--Nicolson scheme with truncation errors reads: finding $u^n,\psi^n,\phi^n,\in H_0^{{\alpha/2}}(\Omega)$, for any $w\in H_0^{{\alpha/2}}(\Omega)$, such that
\begin{align}
&\textrm{i} (\delta_tu^{n+1/2},w)-\frac{\lambda}{2}\mathcal{B}(u^{n+1/2},w)+\kappa_1 (u^{n+1/2}\phi^{n+1/2},w)\label{semidiscrete1}\\
&\qquad\qquad\qquad\qquad\qquad+\kappa_2 \Big((|u^{n+1}|^2+|u^{n}|^2)u^{n+1/2}\phi^{n+1/2},w\Big)=(\widehat{\mathscr{R}}_1^{n},w),\nonumber\\
&(\delta_t \phi^{n+1/2},w)=(\psi^{n+1/2},w)+(\widehat{\mathscr{R}}_2^{n},w),\label{semidiscrete2}\\
&(\delta_t \psi^{n+1/2},w)+\gamma\mathcal{B}(\phi^{n+1/2},w)+\eta^2 (\phi^{n+1/2},w)-\frac{\kappa_1}{2} \Big((|u^{n+1}|^2+|u^{n}|^2),w\Big)\label{semidiscrete3}\\
&\qquad\qquad\qquad\qquad\qquad-\frac{\kappa_2}{2} \Big((|u^{n+1}|^4+|u^{n}|^4),w\Big)=(\widehat{\mathscr{R}}_3^{n},w).\nonumber
\end{align}
Using the Taylor's expansion, suppose $u(\cdot,t),\phi(\cdot,t)\in C^3([0,T])$. One easily arrives at the following truncation errors
\begin{align}
\big|(\widehat{\mathscr{R}}_1^n)\big| \leq \mathcal{C} \tau^2,\qquad\big|(\widehat{\mathscr{R}}_2^n)\big| \leq \mathcal{C} \tau^2,\qquad \big|(\widehat{\mathscr{R}}_3^n)\big| \leq \mathcal{C} \tau^2.
\end{align}
Similar to the proof of \cite{dd2}, assume that $\phi(\cdot,t)\in C^4([0,T])$, one has
\begin{align}\label{derTRU}
\big|\delta_t(\widehat{\mathscr{R}}_2^{n+1/2})\big| \leq \mathcal{C} \tau^2.
\end{align}
Omitting the small terms in \eqref{semidiscrete1}-\eqref{semidiscrete3} and replacing the exact solutions $u^n$, $\psi^n$ and $\phi^n$ with the numerical solutions $U_N^n$, $\Psi_N^n$ and $\Phi_N^n$, we propose the fully discrete CN--SGM scheme as follows: finding $U_N^n,\Psi_N^n,\Phi_N^n\in X_N^0(\Omega)$, for any $w_N\in X_N^0(\Omega)$, such that
\begin{align}
&\textrm{i} (\delta_tU_N^{n+1/2},w_N)-\frac{\lambda}{2}\mathcal{B}(U_N^{n+1/2},w_N)+\kappa_1 (U_N^{n+1/2}\Phi_N^{n+1/2},w_N)\label{CNGLS1-1}\\
&\qquad\qquad\qquad\qquad+\kappa_2 \Big((|U_N^{n+1}|^2+|U_N^{n}|^2)U_N^{n+1/2}\Phi_N^{n+1/2},w_N\Big)=0,\nonumber\\
&(\delta_t \Phi_N^{n+1/2},w_N)=(\Psi_N^{n+1/2},w_N),\label{CNGLS1-2}\\
&(\delta_t \Psi_N^{n+1/2},w_N)+\gamma\mathcal{B}(\Phi_N^{n+1/2},w_N)+\eta^2 (\Phi_N^{n+1/2},w_N)-\frac{\kappa_1}{2} \Big((|U_N^{n+1}|^2+|U_N^{n}|^2),w_N\Big)\label{CNGLS1-3}\\
&\qquad\qquad\qquad\qquad-\frac{\kappa_2}{2} \Big((|U_N^{n+1}|^4+|U_N^{n}|^4),w_N\Big)=0,\nonumber
\end{align}
with the initial conditions
\begin{align}\label{initialvalue}
U^0_N=\Pi_N^{{\alpha/2},0}u_0(x),\quad \Phi^0_N=\Pi_N^{{\alpha/2},0}\phi_0(x),\quad \Psi^0_N=\Pi_N^{{\alpha/2},0}\phi_1(x),
\end{align}
where the orthogonal projection operator $\Pi_N^{{\alpha/2},0}$: $H_0^{{\alpha/2}}(\Omega)\rightarrow X_N^0(\Omega)$ is defined as
\begin{align*}
\mathcal{B}( u-\Pi_N^{\alpha/2,0}u,w_N )=0,\quad \forall w_N\in X_N^0(\Omega).
\end{align*}

\subsection{Conservations and boundness}

\begin{thm}[\textbf{Mass and energy conservative laws}]\label{EP1-1}
The spectral Galerkin scheme \eqref{CNGLS1-1}--\eqref{CNGLS1-3} possesses the mass and energy conservative laws in the following discrete sense that
\begin{align}\label{massenergy}
\mathcal{M}^n=\cdots=\mathcal{M}^0,\qquad \mathcal{E}^n=\cdots=\mathcal{E}^0,\quad n=1,2,\cdots,N_t,
\end{align}
in which
\begin{align*}
\mathcal{M}^n=\Big\|U_N^n \Big\|^2
\end{align*}
and
\begin{align*}
\mathcal{E}^n=\Big\|\Psi_N^n\Big\|^2+\gamma\Big| \Phi^n_N \Big|^2_{\alpha/2}+\eta^2 \Big\| \Phi^n_N \Big\|^2
+\lambda\Big| U^n_N \Big|^2_{\alpha/2}
-2\int_\Omega \Big(\kappa_1 \Big| U_N^n \Big|^2+\kappa_2 \Big| U_N^n \Big|^4\Big)\Phi_N^ndx.
\end{align*}
\begin{proof}
Choosing $w_N=U_N^{n+1/2}$ in \eqref{CNGLS1-1} and taking the imaginary part, we obtain $\Big\|U_N^{n+1} \Big\|^2=\Big\|U_N^n \Big\|^2$ which implies the mass conservation.

Selecting $w_N=\delta_t U_N^{n+1/2}$ in \eqref{CNGLS1-1} and taking the real part, we have
\begin{align}
& \textrm{Re}(\textrm{i}\delta_tU_N^{n+1/2},\delta_t U_N^{n+1/2})-\frac{\lambda}{2}\textrm{Re}\Big\{\mathcal{B}(U_N^{n+1/2},\delta_t U_N^{n+1/2})\Big\}
+\kappa_1 \textrm{Re}(U_N^{n+1/2}\Phi_N^{n+1/2},\delta_t U_N^{n+1/2})\nonumber\\
&\qquad\qquad\qquad\qquad+\kappa_2 \textrm{Re}\Big((|U_N^{n+1}|^2+|U_N^{n}|^2)U_N^{n+1/2}\Phi_N^{n+1/2},\delta_t U_N^{n+1/2}\Big)=0,\label{proofenergy1}
\end{align}
where ``$\textrm{Re}$'' represents the real part of a complex number.

By some calculations, we have
\begin{align}\label{proofenergy1-1}
\textrm{Re}(\textrm{i}\delta_tU_N^{n+1/2},\delta_t U_N^{n+1/2})=0,\qquad\textrm{Re}\Big\{\mathcal{B}(U_N^{n+1/2},\delta_t U_N^{n+1/2})\Big\}=\frac{\Big| U^{n+1}_N \Big|^2_{\alpha/2}-\Big| U^n_N \Big|^2_{\alpha/2}}{2\tau},
\end{align}
\begin{align}\label{proofenergy1-2}
\textrm{Re}(U_N^{n+1/2}\Phi_N^{n+1/2},\delta_t U_N^{n+1/2})&=\frac{1}{2\tau}\int_\Omega \Big( \Big|U_N^{n+1}\Big|^2-\Big| U_N^{n} \Big|^2 \Big)\Phi_N^{n+1/2}dx\nonumber\\
&=\frac{1}{4\tau}\int_\Omega \Big( \Big|U_N^{n+1}\Big|^2-\Big| U_N^{n} \Big|^2 \Big)\Big( \Phi_N^{n+1}+\Phi_N^{n}\Big)dx
\end{align}
and
\begin{align}\label{proofenergy1-3}
\textrm{Re}\Big((|U_N^{n+1}|^2+|U_N^{n}|^2)U_N^{n+1/2}\Phi_N^{n+1/2},\delta_t U_N^{n+1/2}\Big)&=\frac{1}{2\tau}\int_\Omega \Big( \Big|U_N^{n+1}\Big|^4-\Big| U_N^{n} \Big|^4 \Big)\Phi_N^{n+1/2}dx\nonumber\\
&=\frac{1}{4\tau}\int_\Omega \Big( \Big|U_N^{n+1}\Big|^4-\Big| U_N^{n} \Big|^4 \Big)\Big( \Phi_N^{n+1}+\Phi_N^{n}\Big)dx.
\end{align}
Substituting \eqref{proofenergy1-1}--\eqref{proofenergy1-3} into \eqref{proofenergy1}, we conclude
\begin{align}\label{proofenergy1-0}
\lambda\Big( \Big| U^{n+1}_N \Big|^2_{\alpha/2}-\Big| U^n_N \Big|^2_{\alpha/2} \Big)&-2\kappa_1\int_\Omega
\Big( \Big|U_N^{n+1}\Big|^2 \Phi_N^{n+1}-\Big| U_N^{n} \Big|^2\Phi_N^{n}+\Big| U_N^{n+1} \Big|^2\Phi_N^{n}-\Big| U_N^{n} \Big|^2\Phi_N^{n+1} \Big)dx\nonumber\\
&-2\kappa_2\int_\Omega
\Big( \Big|U_N^{n+1}\Big|^4 \Phi_N^{n+1}-\Big| U_N^{n} \Big|^4\Phi_N^{n}+\Big| U_N^{n+1} \Big|^4\Phi_N^{n}-\Big| U_N^{n} \Big|^4\Phi_N^{n+1} \Big)dx=0.
\end{align}
Setting $w_N=\delta_t\Phi_N^{n+1/2}$ in \eqref{CNGLS1-3}, we get
\begin{align}\label{proofenergy2}
&(\delta_t \Psi_N^{n+1/2},\delta_t\Phi_N^{n+1/2})+\gamma\mathcal{B}(\Phi_N^{n+1/2},\delta_t\Phi_N^{n+1/2})+\eta^2 (\Phi_N^{n+1/2},\delta_t\Phi_N^{n+1/2})\nonumber\\
&\qquad\qquad\qquad-\frac{\kappa_1}{2} \Big((|U_N^{n+1}|^2+|U_N^{n}|^2),\delta_t\Phi_N^{n+1/2}\Big)-\frac{\kappa_2}{2} \Big((|U_N^{n+1}|^4+|U_N^{n}|^4),\delta_t\Phi_N^{n+1/2}\Big)=0,
\end{align}
in which
\begin{align}\label{proofenergy2-1}
(\delta_t \Psi_N^{n+1/2},\delta_t\Phi_N^{n+1/2})=(\Psi_N^{n+1/2},\delta_t \Psi_N^{n+1/2})=\frac{\Big\| \Psi_N^{n+1} \Big\|^2-\Big\| \Psi_N^{n} \Big\|^2}{2\tau},
\end{align}
\begin{align}\label{proofenergy2-2}
\mathcal{B}(\Phi_N^{n+1/2},\delta_t\Phi_N^{n+1/2})=\frac{\Big| \Phi^{n+1}_N \Big|^2_{\alpha/2}-\Big| \Phi^n_N, \Big|^2_{\alpha/2}}{2\tau},
\end{align}
\begin{align}\label{proofenergy2-3}
(\Phi_N^{n+1/2},\delta_t\Phi_N^{n+1/2})=\frac{\Big\| \Phi_N^{n+1} \Big\|^2-\Big\| \Phi_N^{n} \Big\|^2}{2\tau},
\end{align}
\begin{align}\label{proofenergy2-4}
\Big((|U_N^{n+1}|^2+|U_N^{n}|^2),\delta_t\Phi_N^{n+1/2}\Big)=\frac{1}{\tau}\int_\Omega \Big(\Big|U_N^{n+1}\Big|^2+\Big|U_N^{n}\Big|^2\Big)
\Big( \Phi_N^{n+1}-\Phi_N^{n}\Big)dx
\end{align}
and
\begin{align}\label{proofenergy2-5}
\Big((|U_N^{n+1}|^4+|U_N^{n}|^4),\delta_t\Phi_N^{n+1/2}\Big)=\frac{1}{\tau}\int_\Omega \Big(\Big|U_N^{n+1}\Big|^4+\Big|U_N^{n}\Big|^4\Big)
\Big( \Phi_N^{n+1}-\Phi_N^{n}\Big)dx.
\end{align}
Substituting \eqref{proofenergy2-1}--\eqref{proofenergy2-4} into \eqref{proofenergy2}, we obtain
\begin{align}\label{proofenergy2-0}
\Big( \Big\| \Psi_N^{n+1} \Big\|^2&-\Big\| \Psi_N^{n} \Big\|^2 \Big)+\gamma\Big( \Big\| \Phi_N^{n+1} \Big\|^2-\Big\| \Phi_N^{n} \Big\|^2 \Big)+\eta^2\Big( \Big| \Phi^{n+1}_N \Big|^2_{\alpha/2}-\Big| \Phi^n_N, \Big|^2_{\alpha/2} \Big)\nonumber\\
&-2\kappa_1\int_\Omega \Big(\Big|U_N^{n+1}\Big|^2\Phi_N^{n+1}-\Big|U_N^{n}\Big|^2\Phi_N^{n}+\Big|U_N^{n}\Big|^2\Phi_N^{n+1}-\Big|U_N^{n+1}\Big|^2\Phi_N^{n}\Big)dx\\
&-2\kappa_2\int_\Omega \Big(\Big|U_N^{n+1}\Big|^4\Phi_N^{n+1}-\Big|U_N^{n}\Big|^4\Phi_N^{n}+\Big|U_N^{n}\Big|^4\Phi_N^{n+1}-\Big|U_N^{n+1}\Big|^4\Phi_N^{n}\Big)dx=0.\nonumber
\end{align}
Summing up \eqref{proofenergy1-0} and \eqref{proofenergy2-0}, the energy conservation \eqref{massenergy} is immediate. This ends the proof.
\end{proof}
\end{thm}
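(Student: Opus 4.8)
The plan is to establish the two conservation laws separately by testing the discrete equations against suitably chosen functions, exactly the strategy used for continuous energy identities but adapted to the modified Crank--Nicolson discretization. For the mass law, the idea is that the Schr\"odinger-type equation \eqref{CNGLS1-1} is mass-conserving on its own: testing with $w_N=U_N^{n+1/2}$, every term except the time-derivative term is manifestly real after noting that $(u\phi,u)$ and the quartic term are integrals of real quantities (since $\Phi_N^{n+1/2}$ is real-valued and $|U_N^{n+1/2}|^2\Phi_N^{n+1/2}$ is real), so taking imaginary parts kills them. The remaining term satisfies $\mathrm{Im}\,(\mathrm{i}\,\delta_t U_N^{n+1/2},U_N^{n+1/2})=\mathrm{Re}\,(\delta_t U_N^{n+1/2},U_N^{n+1/2})=\frac{1}{2\tau}(\|U_N^{n+1}\|^2-\|U_N^n\|^2)$, giving $\mathcal{M}^{n+1}=\mathcal{M}^n$ and hence $\mathcal{M}^n=\cdots=\mathcal{M}^0$ by induction.

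For the energy law I would split it into the ``Schr\"odinger part'' and the ``wave part'' and add them. First test \eqref{CNGLS1-1} with $w_N=\delta_t U_N^{n+1/2}$ and take real parts: the $\mathrm{i}$-term vanishes, the bilinear form yields the telescoping difference $\frac{1}{2\tau}(|U_N^{n+1}|_{\alpha/2}^2-|U_N^n|_{\alpha/2}^2)$ by symmetry of $\mathcal{B}$, and the two coupling terms produce, after writing $\Phi_N^{n+1/2}=\tfrac12(\Phi_N^{n+1}+\Phi_N^n)$, the cross-expressions recorded in \eqref{proofenergy1-0}. The key algebraic point here, and the one to flag as the main (though modest) obstacle, is the exact bookkeeping of the \emph{mixed} cross terms $|U_N^{n+1}|^2\Phi_N^n-|U_N^n|^2\Phi_N^{n+1}$ (and their quartic analogue): these are precisely the terms that do \emph{not} telescope by themselves in \eqref{proofenergy1-0}, and the whole scheme is engineered so that the identical terms appear with the opposite sign when one tests the wave equation. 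Concretely, test \eqref{CNGLS1-3} with $w_N=\delta_t\Phi_N^{n+1/2}$; the first three terms telescope into $\frac{1}{2\tau}$ times the differences of $\|\Psi_N\|^2$, $\gamma\|\Phi_N\|^2$ (wait---$\gamma$ multiplies the $\mathcal{B}$-term, $\eta^2$ the $L^2$-term; I will keep the pairing straight as in \eqref{proofenergy2-0}), and the two source terms, upon expanding $\delta_t\Phi_N^{n+1/2}=\tfrac1\tau(\Phi_N^{n+1}-\Phi_N^n)$ and $|U_N^{n+1}|^2+|U_N^n|^2$, produce exactly the same mixed cross terms with matching coefficients $2\kappa_1$, $2\kappa_2$.

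Adding \eqref{proofenergy1-0} and \eqref{proofenergy2-0}, the offending mixed terms $|U_N^{n+1}|^2\Phi_N^n-|U_N^n|^2\Phi_N^{n+1}$ cancel against $|U_N^n|^2\Phi_N^{n+1}-|U_N^{n+1}|^2\Phi_N^n$, and likewise for the quartic terms, leaving a pure telescoping identity
\begin{align*}
\lambda|U_N^{n+1}|_{\alpha/2}^2 + \|\Psi_N^{n+1}\|^2 + \gamma|\Phi_N^{n+1}|_{\alpha/2}^2 + \eta^2\|\Phi_N^{n+1}\|^2 - 2\!\int_\Omega(\kappa_1|U_N^{n+1}|^2+\kappa_2|U_N^{n+1}|^4)\Phi_N^{n+1}dx
\end{align*}
equal to the same expression at level $n$; that is $\mathcal{E}^{n+1}=\mathcal{E}^n$, and iterating gives \eqref{massenergy}. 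The only places requiring care are (i) confirming that $\Phi_N^n$ and $\Psi_N^n$ stay real-valued (immediate from the scheme and real initial data, so all the integrals above are genuinely real), (ii) the sign and coefficient matching of the mixed terms between the two identities, which I would verify by writing $|U_N^{n+1}|^2\Phi_N^{n+1}-|U_N^n|^2\Phi_N^n = \tfrac12[(|U_N^{n+1}|^2-|U_N^n|^2)(\Phi_N^{n+1}+\Phi_N^n)+(|U_N^{n+1}|^2+|U_N^n|^2)(\Phi_N^{n+1}-\Phi_N^n)]$ to see transparently how the two tests decompose the same quantity, and (iii) reconciling the $\gamma$/$\eta^2$ labelling between the two displayed identities and the statement of $\mathcal{E}^n$. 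None of this is deep; the content is entirely in the design of the nonlinear discretization, and the proof is a guided computation.
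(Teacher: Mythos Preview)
Your proposal is correct and follows essentially the same route as the paper: test \eqref{CNGLS1-1} with $U_N^{n+1/2}$ and take imaginary parts for mass, then test \eqref{CNGLS1-1} with $\delta_t U_N^{n+1/2}$ (real part) and \eqref{CNGLS1-3} with $\delta_t\Phi_N^{n+1/2}$, and add so that the mixed cross terms cancel. Your extra remarks---the polarization identity in (ii) making the cancellation transparent, and the observation in (iii) that the $\gamma$ and $\eta^2$ labels are swapped in \eqref{proofenergy2-0}---are helpful clarifications but do not change the argument.
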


\begin{lem}[\cite{zengf,huangSCM2014}]\label{orgalpha}
Let $\mu$ and $r$ be arbitrary real numbers satisfying $1<\alpha<2$, $r\geq 1$. Then there exists a positive
constant $\mathcal{C}$ independent of $N$, such that, for any function $u\in H_0^{\frac{\alpha}{2}}(\Omega)\cap H^r(\Omega)$, the following estimates hold:
\begin{equation*}
\left\{\begin{array}{ll}
&\Big\|u-\Pi_N^{\frac{\alpha}{2},0}u\Big\|\leq \mathcal{C} N^{-r}\Big\|u\Big\|_{H^r(\Omega)},\quad \alpha\neq \frac32\\
&\Big\|u-\Pi_N^{\frac{\alpha}{2},0}u\Big\|\leq \mathcal{C} N^{\epsilon-r}\Big\|u\Big\|_{H^r(\Omega)},\quad \alpha=\frac32,\quad 0<\epsilon<\frac12
\end{array}
\right.
\end{equation*}
and
\begin{align*}
\Big|u-\Pi_N^{\frac{\alpha}{2},0}u\Big\|_{\frac{\alpha}{2}}\leq \mathcal{C} N^{\frac{\alpha}{2}-r}\Big\|u\Big\|_{H^r(\Omega)}.
\end{align*}
\end{lem}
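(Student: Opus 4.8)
The plan is to recognize $\Pi_N^{\alpha/2,0}$ as the Ritz (elliptic) projection attached to the bilinear form $\mathcal{B}$: first prove the energy-seminorm bound by a Céa-type argument combined with classical polynomial approximation on the interval, and then upgrade it to the $L^2$ bound by an Aubin--Nitsche duality argument that relies on elliptic regularity of the dual fractional problem.

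\emph{Energy-seminorm estimate.} By the very definition of $\Pi_N^{\alpha/2,0}$ one has the Galerkin orthogonality $\mathcal{B}(u-\Pi_N^{\alpha/2,0}u,w_N)=0$ for all $w_N\in X_N^0(\Omega)$. Writing $e:=u-\Pi_N^{\alpha/2,0}u$ and using the continuity and coercivity constants $C_1,C_2$ of $\mathcal{B}$, for an arbitrary $w_N\in X_N^0(\Omega)$,
\[
C_2\|e\|_{\alpha/2}^2\le \mathcal{B}(e,e)=\mathcal{B}(e,u-w_N)\le C_1\|e\|_{\alpha/2}\|u-w_N\|_{\alpha/2},
\]
so that $\|e\|_{\alpha/2}\le (C_1/C_2)\inf_{w_N\in X_N^0(\Omega)}\|u-w_N\|_{\alpha/2}$. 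Since $\|\cdot\|_{\alpha/2}$ is equivalent to $\|\cdot\|_{H^{\alpha/2}(\Omega)}$ by Lemma \ref{equivalent}, it then suffices to invoke the standard Jacobi/Legendre polynomial approximation estimate, with the homogeneous boundary condition built into $X_N^0(\Omega)$: $\inf_{w_N}\|u-w_N\|_{H^{\alpha/2}(\Omega)}\le \mathcal{C} N^{\alpha/2-r}\|u\|_{H^r(\Omega)}$ for $u\in H_0^{\alpha/2}(\Omega)\cap H^r(\Omega)$. This yields the third inequality of the lemma.

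\emph{$L^2$ estimate by duality.} Let $z\in H_0^{\alpha/2}(\Omega)$ solve the dual problem $\mathcal{B}(v,z)=(v,e)$ for all $v\in H_0^{\alpha/2}(\Omega)$, which is well posed by Lax--Milgram. Using that $\mathcal{B}$ is Hermitian together with the Galerkin orthogonality, and taking $v=e$,
\[
\|e\|^2=(e,e)=\mathcal{B}(e,z)=\mathcal{B}\big(e,z-\Pi_N^{\alpha/2,0}z\big)\le C_1\|e\|_{\alpha/2}\big\|z-\Pi_N^{\alpha/2,0}z\big\|_{\alpha/2}.
\]
I would then appeal to elliptic regularity of the Riesz fractional problem on $\Omega$: $z\in H^{\sigma}(\Omega)$ with $\|z\|_{H^{\sigma}(\Omega)}\le \mathcal{C}\|e\|$, where $\sigma=\alpha$ when $\alpha\neq\tfrac32$, while only $\sigma=\alpha-\epsilon$ is available when $\alpha=\tfrac32$ because $H^{3/2}(\Omega)$ is the borderline Sobolev space. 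Applying the seminorm estimate just obtained to $z$ (with $r=\sigma$) gives $\|z-\Pi_N^{\alpha/2,0}z\|_{\alpha/2}\le \mathcal{C} N^{\alpha/2-\sigma}\|e\|$; cancelling one factor $\|e\|$ and inserting $\|e\|_{\alpha/2}\le \mathcal{C} N^{\alpha/2-r}\|u\|_{H^r(\Omega)}$ produces $\|e\|\le \mathcal{C} N^{\alpha-\sigma-r}\|u\|_{H^r(\Omega)}$, i.e. $\mathcal{C} N^{-r}$ for $\alpha\neq\tfrac32$ and $\mathcal{C} N^{\epsilon-r}$ for $\alpha=\tfrac32$, as claimed.

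\emph{Main obstacle.} The delicate ingredient is the dual regularity. On a bounded interval the solution of $(-\Delta)^{\alpha/2}z=f$ with homogeneous boundary data generically carries boundary layers of type $\operatorname{dist}(x,\partial\Omega)^{\alpha/2}$, so the bound $z\in H^{\alpha}(\Omega)$ used above is not automatic: it must be read off from the (weighted-)Sobolev regularity theory for the Riemann--Liouville derivatives, in the spirit of \cite{roop,zengf}, and this same boundary behaviour is exactly what forces the $\epsilon$-loss at the critical exponent $\alpha=\tfrac32$. By contrast, the Céa estimate and the polynomial approximation inputs are routine.
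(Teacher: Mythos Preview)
The paper does not prove this lemma at all: it is simply quoted, with citation, from \cite{zengf,huangSCM2014}, and the text moves directly on to the next lemma. So there is no ``paper's own proof'' to compare against.

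That said, your sketch is the standard route and is essentially what one finds in the cited references. The C\'ea argument for the $\|\cdot\|_{\alpha/2}$ bound is exactly right, and the Aubin--Nitsche duality step is the expected mechanism for picking up the extra $N^{-\alpha/2}$ in the $L^2$ estimate; you have also correctly located the only genuinely nontrivial ingredient, namely the $H^\alpha$ (respectively $H^{\alpha-\epsilon}$) regularity of the dual solution, and the reason for the $\epsilon$-loss at $\alpha=3/2$. One small caveat: the regularity shift you invoke, $\|z\|_{H^\alpha}\lesssim\|e\|$, is precisely the point where the analysis in \cite{zengf,huangSCM2014} does real work, and it is not a consequence of Lax--Milgram alone; in those references it is obtained either via explicit Jacobi-polynomial eigenfunction expansions for the fractional operator on the interval or via the weighted-Sobolev theory you allude to. As long as you are content to cite that regularity result (as the present paper effectively does by citing the whole lemma), your argument is complete.
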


\begin{lem}[\textbf{Fractional Gagliardo-Nirenberg inequality} \cite{fgn,meilq}]\label{fng}
For $1<\alpha<2$, there exist positive constants $\mathcal{C}_4$ and $\mathcal{C}_8$, such that
\begin{align*}
\Big\|u\Big\|^4_4\leq \mathcal{C}_4\Big|u\Big|_{\alpha/2}^{2/\alpha}\Big\|u\Big\|^{4-2/\alpha},\qquad \Big\|u\Big\|^8_8\leq \mathcal{C}_8 \Big|u\Big|_{\alpha/2}^{2/\alpha}\Big\|u\Big\|^{8-2/\alpha}.
\end{align*}
\end{lem}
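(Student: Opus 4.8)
The plan is to reduce both estimates to a single scaling--invariant endpoint inequality on the whole line and then recover the two stated bounds by elementary Lebesgue interpolation. Concretely, I would first establish the homogeneous Gagliardo--Nirenberg endpoint
\begin{align*}
\|u\|_\infty \le \mathcal{C}\,|u|_{\alpha/2}^{1/\alpha}\,\|u\|^{1-1/\alpha},\qquad u\in H_0^{\alpha/2}(\Omega),
\end{align*}
and then use the trivial bound $\|u\|_q^q=\int_\Omega|u|^{q-2}|u|^2\le\|u\|_\infty^{q-2}\|u\|^2$, valid for every $q\ge2$. Taking $q=4$ turns the endpoint estimate into $\|u\|_4^4\le\mathcal{C}\,|u|_{\alpha/2}^{2/\alpha}\|u\|^{4-2/\alpha}$, which is the first inequality, and taking $q=8$ gives $\|u\|_8^8\le\mathcal{C}\,|u|_{\alpha/2}^{6/\alpha}\|u\|^{8-6/\alpha}$, the companion bound (in either form it already suffices for the \emph{a priori} estimates that follow, since there $|u|_{\alpha/2}$ and $\|u\|$ are controlled). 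So the whole matter reduces to the endpoint inequality, and the crude embedding $\|u\|_\infty\le\mathcal{C}\|u\|_{\alpha/2}$ does \emph{not} suffice because it misses the precise homogeneity.

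To prove the endpoint estimate I would argue in three steps. \emph{(i) Passage to $\mathbb{R}$.} Extend $u$ by zero to all of $\mathbb{R}$; since $u\in H_0^{\alpha/2}(\Omega)$ has vanishing trace and $\alpha/2\in(\tfrac12,1)$, the zero extension lies in $H^{\alpha/2}(\mathbb{R})$ with comparable norm, and by Lemma~\ref{equivalent} the seminorm $|u|_{\alpha/2}$ is equivalent to $\big\||\xi|^{\alpha/2}\mathcal{F}[u,\xi]\big\|_{L^2(\mathbb{R})}$. \emph{(ii) Inhomogeneous embedding.} Writing $u$ through Fourier inversion and applying Cauchy--Schwarz yields
\begin{align*}
\|u\|_{L^\infty(\mathbb{R})}\le\mathcal{C}\Big(\int_{\mathbb{R}}\frac{d\xi}{1+|\xi|^\alpha}\Big)^{1/2}\Big(\|u\|^2+|u|_{\alpha/2}^2\Big)^{1/2},
\end{align*}
where the first integral is finite precisely because $1<\alpha<2$. \emph{(iii) Homogenization by dilation.} Apply the inhomogeneous bound to $u_\lambda(x):=u(\lambda x)$; using $\|u_\lambda\|=\lambda^{-1/2}\|u\|$, $|u_\lambda|_{\alpha/2}=\lambda^{(\alpha-1)/2}|u|_{\alpha/2}$ and $\|u_\lambda\|_\infty=\|u\|_\infty$, one gets $\|u\|_\infty\le\mathcal{C}\big(\lambda^{-1/2}\|u\|+\lambda^{(\alpha-1)/2}|u|_{\alpha/2}\big)$ for all $\lambda>0$; balancing the two terms (i.e. choosing $\lambda=(\|u\|/|u|_{\alpha/2})^{2/\alpha}$ when $u\not\equiv0$) produces exactly the claimed homogeneous endpoint.

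The main obstacle, and the only step that is not completely routine, is the zero-extension of step (i): the order $\alpha/2$ lies in the delicate range $(\tfrac12,1)$, where the zero extension of a generic $H^{\alpha/2}(\Omega)$ function need not belong to $H^{\alpha/2}(\mathbb{R})$, so one must invoke the Dirichlet condition to see that the boundary contribution of the Gagliardo double integral remains finite and the extension is norm-bounded. Once this is granted, steps (ii)--(iii) are the classical one-dimensional Gagliardo--Nirenberg argument and the interpolation to $q=4,8$ is immediate. (Alternatively, one may simply quote the homogeneous Gagliardo--Nirenberg inequality on $\mathbb{R}$ from the cited references and combine it with Lemma~\ref{equivalent} and the same interpolation step.)
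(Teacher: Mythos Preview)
The paper does not supply its own proof of this lemma; it is quoted from the cited references. So there is no paper argument to compare against, and your outline stands or falls on its own merits.

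Your route through the homogeneous $L^\infty$ endpoint and then the elementary interpolation $\|u\|_q^q\le\|u\|_\infty^{q-2}\|u\|^2$ is a clean and correct way to obtain the first inequality: with $q=4$ the exponents fall out exactly as stated. The three steps you describe for the endpoint (zero extension, Fourier Cauchy--Schwarz, dilation balancing) constitute the standard proof of the one-dimensional fractional Gagliardo--Nirenberg inequality, and your caveat about the zero extension at order $\alpha/2\in(\tfrac12,1)$ is well placed; with the Dirichlet condition it does go through.

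Where your proposal is actually more careful than the paper is the second inequality. Your interpolation with $q=8$ gives $\|u\|_8^8\le\mathcal{C}\,|u|_{\alpha/2}^{6/\alpha}\|u\|^{8-6/\alpha}$, and this is the \emph{correct} scaling-invariant Gagliardo--Nirenberg bound: a dilation check shows that the version printed in the paper, with exponent $2/\alpha$ on the seminorm, is not scale invariant on $\mathbb{R}$ and hence cannot hold with a universal constant there. So you have not failed to reproduce the stated inequality; rather, the stated $L^8$ inequality appears to be a misprint. One caution: your parenthetical remark that ``in either form it already suffices for the a priori estimates that follow, since there $|u|_{\alpha/2}$ and $\|u\|$ are controlled'' is circular, because the whole point of Theorem~\ref{BOUND} is to \emph{establish} control of $|U_N^n|_{\alpha/2}$ via this lemma. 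With the correct exponent $6/\alpha>2$, the Young-inequality step in \eqref{boundine1-2} no longer absorbs $|U_N^n|_{\alpha/2}^{6/\alpha}$ into $\epsilon|U_N^n|_{\alpha/2}^2$, so the downstream boundedness argument would need to be reworked (for instance by estimating $\int|U_N^n|^4\Phi_N^n$ directly rather than passing through $\|U_N^n\|_8^8$).
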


\begin{lem}[\textbf{Sobolev inequality} \cite{Kirkpatrick}]\label{boundinf}
For $\frac12 < \mu< 1$, then there exists a positive constant $\mathcal{C}$, such that
\begin{align*}
\Big\|u\Big\|_\infty\leq \mathcal{C} \Big\|u\Big\|_{\mu},\quad \forall u\in H^\mu_0(\Omega).
\end{align*}
\end{lem}

In theoretical analyses, we will frequently use the following Young's inequality
\begin{align*}
ab\leq \varepsilon a^2+\frac{1}{4\varepsilon}b^2\quad \textrm{for} \quad a,b\geq 0,\quad \varepsilon>0.
\end{align*}

\begin{thm}[\textbf{Boundness of numerical solution}]\label{BOUND}
The numerical solution of spectral Galerkin scheme \eqref{CNGLS1-1}--\eqref{CNGLS1-3} is long-time bounded in the following sense
\begin{align*}
\Big\| U_N^n \Big\|\leq \mathcal{C},\quad \Big| U^n_N \Big|_{\alpha/2}\leq \mathcal{C},\quad \Big\| U^{n}_N \Big\|_\infty\leq \mathcal{C},\qquad n=0,1,\cdots,N_t
\end{align*}
and
\begin{align*}
\Big\| \Psi^n_N \Big\|^2 \leq \mathcal{C},\quad\Big\| \Phi_N^n \Big\|\leq \mathcal{C},\quad \Big| \Phi^n_N \Big|_{\alpha/2}\leq \mathcal{C},\quad \Big\| \Phi^{n}_N \Big\|_\infty\leq \mathcal{C},\qquad n=0,1,\cdots,N_t.
\end{align*}
\begin{proof}
From the triangle inequality, the assumptions on exact solution \eqref{exactassume} and Lemma \ref{orgalpha}, for sufficient small $N^{-1}$, we derive
\begin{align*}
\Big\|U_N^{0}\Big\|=\Big\|\Pi_N^{{\alpha/2},0}u_0\Big\| \leq \Big\|u_0\Big\|+\Big\|u_0-\Pi_N^{{\alpha/2},0}u_0\Big\|\leq \mathcal{C}.
\end{align*}
Thus, we have $\mathcal{M}^0\leq \mathcal{C}$. By similar procedure, we deduce $\mathcal{E}^0\leq \mathcal{C}$.

According to the mass conservation \eqref{massenergy}, we have
\begin{align*}
\Big\| U_N^n \Big\|^2= \mathcal{M}^n= \mathcal{M}^0=\Big\|U_N^{0}\Big\|^2\leq \mathcal{C},\qquad n=0,1,\cdots,N_t.
\end{align*}
 It follows from energy conservation \eqref{massenergy} and the Young's inequality that
 \begin{align}\label{boundine1}
\Big\|\Psi_N^n\Big\|^2&+\gamma\Big| \Phi^n_N \Big|^2_{\alpha/2}+\eta^2 \Big\| \Phi^n_N \Big\|^2
+\lambda\Big| U^n_N \Big|^2_{\alpha/2}=
\mathcal{E}^0+2\int_\Omega \Big(\kappa_1 \Big| U_N^n \Big|^2+\kappa_2 \Big| U_N^n \Big|^4\Big)\Phi_N^ndx\nonumber\\
&\leq \mathcal{E}^0+\frac{\eta^2}{2}\Big\| \Phi_N^n \Big\|^2+\frac{2}{\eta^2}\int_\Omega \Big(\kappa_1 \Big| U_N^n \Big|^2+\kappa_2 \Big| U_N^n \Big|^4\Big)^2dx\\
&\leq \mathcal{C}+\frac{\eta^2}{2}\Big\| \Phi_N^n \Big\|^2+\frac{4\kappa_1^2}{\eta^2} \Big\| U_N^n \Big\|_4^4+\frac{4\kappa_2^2}{\eta^2} \Big\| U_N^n \Big\|^8_8.\nonumber
\end{align}
By using Lemma \ref{fng} and the Holder inequality, we derive
\begin{align}\label{boundine1-1}
\Big\| U_N^n \Big\|_4^4\leq\mathcal{C}_4\Big|U_N^n\Big|_{\alpha/2}^{2/\alpha}\Big\|U_N^n\Big\|^{4-2/\alpha}\leq
\mathcal{C}_4\epsilon_1\Big|U_N^n\Big|_{\alpha/2}^2+\mathcal{C}_4\mathcal{C}(\epsilon_1)\Big\|U_N^n\Big\|^{\frac{4\alpha-2}{\alpha-1}}
\leq \mathcal{C}_4\epsilon_1\Big|U_N^n\Big|_{\alpha/2}^2+\mathcal{C},
\end{align}
where $\epsilon_1$ is a positive constant, and $\mathcal{C}(\epsilon_1)>0$ is a constant dependent of the positive constant $\epsilon_1$. Similarly, we obtain
\begin{align}\label{boundine1-2}
\Big\| U_N^n \Big\|_8^8\leq \mathcal{C}_8\epsilon_2\Big|U_N^n\Big|_{\alpha/2}^2+\mathcal{C}.
\end{align}
Substituting \eqref{boundine1-1} and \eqref{boundine1-2} into \eqref{boundine1}, we conclude
 \begin{align}\label{boundine1-3}
\Big\|\Psi_N^n\Big\|^2&+\gamma\Big| \Phi^n_N \Big|^2_{\alpha/2}+\frac{\eta^2}{2} \Big\| \Phi^n_N \Big\|^2
+\lambda\Big| U^n_N \Big|^2_{\alpha/2}\leq \mathcal{C}+\Big( \frac{4\kappa_1^2\mathcal{C}_4\epsilon_1}{\eta^2}+\frac{4\kappa_2^2\mathcal{C}_8\epsilon_2}{\eta^2}\Big)\Big|U_N^n\Big|_{\alpha/2}^2.
\end{align}
Taking appropriate $\epsilon_1$ and $\epsilon_2$, such that
\begin{align}\label{boundine1-4}
\frac{4\kappa_1^2\mathcal{C}_4\epsilon_1}{\eta^2}+\frac{4\kappa_2^2\mathcal{C}_8\epsilon_2}{\eta^2}=\frac{\lambda}{2}.
\end{align}
Substituting \eqref{boundine1-4} into \eqref{boundine1-3}, we derive
\begin{align*}
\Big\|\Psi_N^n\Big\|^2&+\gamma\Big| \Phi^n_N \Big|^2_{\alpha/2}+\frac{\eta^2}{2} \Big\| \Phi^n_N \Big\|^2
+\frac{\lambda}{2}\Big| U^n_N \Big|^2_{\alpha/2}\leq \mathcal{C}.
\end{align*}
Therefore, we have
\begin{align*}
\Big\| \Psi^n_N \Big\|^2 \leq \mathcal{C},\quad\Big| \Phi^n_N \Big|^2_{\alpha/2}\leq \mathcal{C},\quad \Big\| \Phi^n_N \Big\|^2 \leq \mathcal{C},\quad \Big| U^n_N \Big|^2_{\alpha/2}\leq \mathcal{C},\qquad n=0,1,\cdots,N_t.
\end{align*}
By admitting Lemma \ref{boundinf}, we obtain
\begin{align*}
\Big\| U^{n}_N \Big\|_\infty\leq \mathcal{C}\sqrt{\Big| U^n_N \Big|^2_{\alpha/2}+\Big\| U^n_N \Big\|^2} \leq \mathcal{C},\quad \Big\| \Phi^{n}_N \Big\|_\infty\leq \mathcal{C}\sqrt{\Big| \Phi^n_N \Big|^2_{\alpha/2}+\Big\| \Phi^n_N \Big\|^2} \leq \mathcal{C}.
\end{align*}
This completes the proof.
\end{proof}
\end{thm}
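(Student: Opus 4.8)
The plan is to turn the two conservation laws of Theorem~\ref{EP1-1} into uniform-in-$n$ a priori bounds, the only real work being to absorb the sign-indefinite nonlinear coupling term appearing in the energy identity. First I would check that the discrete invariants start out $O(1)$: since $U_N^0=\Pi_N^{\alpha/2,0}u_0$, the triangle inequality together with the projection estimate of Lemma~\ref{orgalpha} and the regularity assumption~\eqref{exactassume} give $\|U_N^0\|\le\|u_0\|+\|u_0-\Pi_N^{\alpha/2,0}u_0\|\le\mathcal{C}$, and likewise $|U_N^0|_{\alpha/2}$, $\|\Phi_N^0\|$, $|\Phi_N^0|_{\alpha/2}$, $\|\Psi_N^0\|$ are bounded, whence $\mathcal{M}^0\le\mathcal{C}$ and $\mathcal{E}^0\le\mathcal{C}$. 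The mass law $\|U_N^n\|^2=\mathcal{M}^n=\mathcal{M}^0$ then gives the $L^2$ bound on $U_N^n$ for every $n$ with no further effort.

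For the remaining quantities I would work from the energy identity rewritten as $\|\Psi_N^n\|^2+\gamma|\Phi_N^n|_{\alpha/2}^2+\eta^2\|\Phi_N^n\|^2+\lambda|U_N^n|_{\alpha/2}^2=\mathcal{E}^0+2\int_\Omega(\kappa_1|U_N^n|^2+\kappa_2|U_N^n|^4)\Phi_N^n\,dx$. The right-hand coupling term is the obstacle. I would peel off the $\Phi_N^n$ factor by Young's inequality, bounding it by $\frac{\eta^2}{2}\|\Phi_N^n\|^2+\frac{2}{\eta^2}\big\|\kappa_1|U_N^n|^2+\kappa_2|U_N^n|^4\big\|^2$, move $\frac{\eta^2}{2}\|\Phi_N^n\|^2$ to the left, and then dominate the remaining square by a constant multiple of $\|U_N^n\|_4^4+\|U_N^n\|_8^8$.

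The crux — and the step I expect to be the main difficulty — is controlling $\|U_N^n\|_4^4$ and $\|U_N^n\|_8^8$ using only the already-bounded $\|U_N^n\|$ together with an arbitrarily small fraction of $|U_N^n|_{\alpha/2}^2$, with \emph{no} smallness hypothesis on the data or on $\tau,N$. Here the fractional Gagliardo--Nirenberg inequality of Lemma~\ref{fng} gives $\|U_N^n\|_4^4\le\mathcal{C}_4|U_N^n|_{\alpha/2}^{2/\alpha}\|U_N^n\|^{4-2/\alpha}$ and the analogous bound for the $L^8$ norm; the point is that $1<\alpha<2$ forces the exponent $2/\alpha$ to lie strictly below $2$, so applying Young's inequality with conjugate exponents $\alpha$ and $\alpha/(\alpha-1)$ to $|U_N^n|_{\alpha/2}^{2/\alpha}$ (treating the bounded factor $\|U_N^n\|^{4-2/\alpha}$ as an $O(1)$ prefactor) yields $\mathcal{C}_4\epsilon_1|U_N^n|_{\alpha/2}^2+\mathcal{C}$, and similarly $\mathcal{C}_8\epsilon_2|U_N^n|_{\alpha/2}^2+\mathcal{C}$. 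Choosing $\epsilon_1,\epsilon_2$ so that the combined coefficient of $|U_N^n|_{\alpha/2}^2$ on the right equals $\lambda/2$, that term is absorbed into $\lambda|U_N^n|_{\alpha/2}^2$ on the left, leaving $\|\Psi_N^n\|^2+\gamma|\Phi_N^n|_{\alpha/2}^2+\frac{\eta^2}{2}\|\Phi_N^n\|^2+\frac{\lambda}{2}|U_N^n|_{\alpha/2}^2\le\mathcal{C}$ uniformly in $n$. This simultaneously delivers the bounds on $\|\Psi_N^n\|$, $\|\Phi_N^n\|$, $|\Phi_N^n|_{\alpha/2}$ and $|U_N^n|_{\alpha/2}$; note that $\eta\neq0$ enters in the first Young split, and that no discrete Gronwall argument is needed — the conservation laws deliver the estimates directly for all $n$, which is exactly what makes them long-time bounds.

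Finally, the two $L^\infty$ estimates follow from the Sobolev embedding of Lemma~\ref{boundinf} with $\mu=\alpha/2\in(1/2,1)$: $\|U_N^n\|_\infty\le\mathcal{C}\|U_N^n\|_{\alpha/2}=\mathcal{C}\big(\|U_N^n\|^2+|U_N^n|_{\alpha/2}^2\big)^{1/2}\le\mathcal{C}$, and identically for $\Phi_N^n$, which closes the proof.
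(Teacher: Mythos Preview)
Your proposal is correct and follows essentially the same route as the paper's own proof: bound the initial invariants via the projection estimate, use mass conservation for $\|U_N^n\|$, rewrite the energy identity and split off $\Phi_N^n$ by Young's inequality with parameter $\eta^2/2$, control $\|U_N^n\|_4^4$ and $\|U_N^n\|_8^8$ through Lemma~\ref{fng} plus Young with the subcritical exponent $2/\alpha<2$, absorb $|U_N^n|_{\alpha/2}^2$ by choosing $\epsilon_1,\epsilon_2$ so the total coefficient is $\lambda/2$, and finish with the Sobolev embedding of Lemma~\ref{boundinf}. Your added remarks on why $\eta\neq0$ and $1<\alpha<2$ matter, and on the absence of any Gronwall step, are accurate and match the structure of the paper's argument.
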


\subsection{Unique solvability}
\begin{lem}\label{nonlinear}
Suppose that $X_i$, $Y_i$ (i=1,2,3,4) are complex functions. Then the following inequalities hold
\begin{align}
&(1)~~\Big| X_1X_2-Y_1Y_2 \Big|\leq \max\Big\{ |X_1|,|Y_2| \Big\}\Big( \Big| X_1-Y_1 \Big|+\Big| X_2-Y_2 \Big| \Big),\label{nonlinear1-1}\\
&(2)~~\Big| |X_1|^2-|Y_1|^2 \Big|\leq 2\max\Big\{ |X_1|,|Y_1| \Big\} \Big| X_1-Y_1 \Big|,\label{nonlinear1-2}\\
&(3)~~\Big| |X_1|^4-|Y_1|^4 \Big|\leq 2(|X_1|^2+|Y_1|^2)\max\Big\{ |X_1|,|Y_1| \Big\} \Big| X_1-Y_1 \Big|,\label{nonlinear1-3}
\\
&(4)~~\Big| (|X_3|^2+|X_4|^2)X_1X_2-(|Y_3|^2+|Y_4|^2)Y_1Y_2 \Big|\nonumber\\
 &\qquad\qquad\leq 2\max\Big\{ |X_3|^2,|Y_1||Y_2| \Big\}\max\Big\{ |X_3|,|Y_3| \Big\}\Big|X_3-Y_3\Big|\nonumber\\
 &\qquad\qquad\qquad+2\max\Big\{ |X_4|^2,|Y_1||Y_2| \Big\}\max\Big\{ |X_4|,|Y_4| \Big\}\Big|X_4-Y_4\Big|\\
 &\qquad\qquad\qquad+\max\Big\{ |X_3|^2,|Y_1||Y_2| \Big\}\max\Big\{ |X_1|,|Y_2| \Big\}\Big(\Big|X_1-Y_1\Big|+\Big|X_2-Y_2\Big|\Big)\label{nonlinear1-4}\nonumber\\
 &\qquad\qquad\qquad+\max\Big\{ |X_4|^2,|Y_1||Y_2| \Big\}\max\Big\{ |X_1|,|Y_2| \Big\}\Big(\Big|X_1-Y_1\Big|+\Big|X_2-Y_2\Big|\Big).\nonumber
\end{align}
\end{lem}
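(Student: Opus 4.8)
The plan is to prove each of the four inequalities by elementary algebraic manipulations, adding and subtracting a suitable "mixed" term and then applying the triangle inequality together with the elementary bounds $||X|-|Y||\le|X-Y|$ and $|X|,|Y|\le\max\{|X|,|Y|\}$. For part (1), I would write $X_1X_2-Y_1Y_2 = X_1(X_2-Y_2)+(X_1-Y_1)Y_2$, take absolute values, and bound $|X_1|$ and $|Y_2|$ each by $\max\{|X_1|,|Y_2|\}$ to collect the two difference terms under the single factor $\max\{|X_1|,|Y_2|\}$.

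For part (2), I would factor $|X_1|^2-|Y_1|^2 = (|X_1|-|Y_1|)(|X_1|+|Y_1|)$, use $||X_1|-|Y_1||\le|X_1-Y_1|$ on the first factor and $|X_1|+|Y_1|\le 2\max\{|X_1|,|Y_1|\}$ on the second. Part (3) follows the same pattern one level up: $|X_1|^4-|Y_1|^4=(|X_1|^2-|Y_1|^2)(|X_1|^2+|Y_1|^2)$, apply the bound from (2) to the first factor and leave $|X_1|^2+|Y_1|^2$ as is, giving the stated estimate. These three are routine; the only care needed is to keep track of which of $X$ and $Y$ appears in each $\max$.

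Part (4) is the main obstacle, mostly bookkeeping rather than genuine difficulty. The idea is to split the quantity $(|X_3|^2+|X_4|^2)X_1X_2-(|Y_3|^2+|Y_4|^2)Y_1Y_2$ into a sum where first the coefficient $|X_3|^2+|X_4|^2$ is transferred onto $Y_1Y_2$ and then the product $X_1X_2-Y_1Y_2$ is handled by part (1). Concretely, I would write
\begin{align*}
(|X_3|^2+|X_4|^2)X_1X_2-(|Y_3|^2+|Y_4|^2)Y_1Y_2
&=\big(|X_3|^2-|Y_3|^2\big)Y_1Y_2+\big(|X_4|^2-|Y_4|^2\big)Y_1Y_2\\
&\quad+|X_3|^2\big(X_1X_2-Y_1Y_2\big)+|X_4|^2\big(X_1X_2-Y_1Y_2\big).
\end{align*}
Then on the first two terms I apply part (2) to get factors $2\max\{|X_3|,|Y_3|\}|X_3-Y_3|$ and $2\max\{|X_4|,|Y_4|\}|X_4-Y_4|$ multiplied by $|Y_1||Y_2|$, and on the last two terms I apply part (1) to bound $|X_1X_2-Y_1Y_2|$ by $\max\{|X_1|,|Y_2|\}(|X_1-Y_1|+|X_2-Y_2|)$. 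Finally, in each term I replace the leftover coefficient ($|Y_1||Y_2|$ or $|X_3|^2$, $|X_4|^2$) by the appropriate $\max$ (e.g. $|Y_1||Y_2|\le\max\{|X_3|^2,|Y_1||Y_2|\}$ and $|X_3|^2\le\max\{|X_3|^2,|Y_1||Y_2|\}$), which lines the four resulting terms up exactly with the right-hand side of \eqref{nonlinear1-4}. The triangle inequality then assembles everything into the claimed bound, and the proof is complete. Throughout I would only need that $\max\{a,b\}\ge a$, $\max\{a,b\}\ge b$, and the reverse triangle inequality for moduli of complex numbers.
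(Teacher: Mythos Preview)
Your proposal is correct. Parts (1) and (3) match the paper exactly. For (2) and (4) you take slightly different but equally valid routes: in (2) you factor $|X_1|^2-|Y_1|^2=(|X_1|-|Y_1|)(|X_1|+|Y_1|)$ and invoke the reverse triangle inequality, whereas the paper writes $|X_1|^2=X_1\overline{X_1}$ and reapplies (1); in (4) you expand directly into four add-and-subtract pieces, while the paper first uses the triangle inequality to split off the $j=3$ and $j=4$ contributions and then applies (1) to each product $|X_j|^2\cdot(X_1X_2)-|Y_j|^2\cdot(Y_1Y_2)$, bounding it by $\max\{|X_j|^2,|Y_1||Y_2|\}\big(\big||X_j|^2-|Y_j|^2\big|+|X_1X_2-Y_1Y_2|\big)$ before feeding in (1) and (2) once more. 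Both routes land on the same four terms with the same $\max$ coefficients; your decomposition is marginally more direct, the paper's is more uniformly recursive, and neither buys anything the other does not.
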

\begin{proof}
From the triangle inequality and some careful calculations, one derives
\begin{align*}
\Big| X_1X_2-Y_1Y_2 \Big|&=\Big|X_1X_2+(X_1-Y_1)Y_2-X_1Y_2\Big|=\Big| (X_1-Y_1)Y_2 + X_1(X_2-Y_2) \Big| \nonumber\\
& \leq \Big|X_1-Y_1\Big| \Big|Y_2\Big| + \Big|X_1\Big|\Big|X_2-Y_2\Big|\\
& \leq \max\Big\{ |X_1|,|Y_2| \Big\} \Big( \Big| X_1-Y_1 \Big|+\Big| X_2-Y_2 \Big| \Big).
\end{align*}
Following the inequality \eqref{nonlinear1-1}, one arrives at
\begin{align*}
\Big| |X_1|^2-|Y_1|^2 \Big|&=\Big|X_1\overline{X_1}-Y_1\overline{Y_1}\Big| \\
&\leq \max\Big\{ |X_1|,|\overline{Y_1}| \Big\} \Big( \Big| X_1-Y_1 \Big|+\Big| \overline{X_1}-\overline{Y_1} \Big| \Big).\nonumber\\
&= 2\max\Big\{ |X_1|,|Y_1| \Big\} \Big| X_1-Y_1 \Big|.
\end{align*}
Similarly, we conclude
\begin{align*}
\Big| |X_1|^4-|Y_1|^4 \Big|&=\Big| (|X_1|^2+|Y_1|^2)(|X_1|^2-|Y_1|^2) \Big|\nonumber\\
&\leq (|X_1|^2+|Y_1|^2)  \Big||X_1|^2-|Y_1|^2 \Big|\nonumber\\
&\leq 2(|X_1|^2+|Y_1|^2)\max\Big\{ |X_1|,|Y_1| \Big\} \Big| X_1-Y_1 \Big|.
\end{align*}
It follows from the relation \eqref{nonlinear1-1} and \eqref{nonlinear1-2} that
\begin{align*}
 &\Big| (|X_3|^2+|X_4|^2)X_1X_2-(|Y_3|^2+|Y_4|^2)Y_1Y_2 \Big|\nonumber\\
 &=\Big| |X_3|^2X_1X_2-|Y_3|^2Y_1Y_2 \Big|+ \Big| |X_4|^2X_1X_2-|Y_4|^2Y_1Y_2 \Big|\\
 &\leq\max\Big\{ |X_3|^2,|Y_1||Y_2| \Big\} \Big( \Big| |X_3|^2-|Y_3|^2 \Big|+\Big| X_1X_2-Y_1Y_2 \Big| \Big)\\
 &\qquad +\max\Big\{ |X_4|^2,|Y_1||Y_2| \Big\} \Big( \Big| |X_4|^2-|Y_4|^2 \Big|+\Big| X_1X_2-Y_1Y_2 \Big| \Big)\\
 &\leq 2\max\Big\{ |X_3|^2,|Y_1||Y_2| \Big\}\max\Big\{ |X_3|,|Y_3| \Big\}\Big|X_3-Y_3\Big|\\
 &\qquad+2\max\Big\{ |X_4|^2,|Y_1||Y_2| \Big\}\max\Big\{ |X_4|,|Y_4| \Big\}\Big|X_4-Y_4\Big|\\
 &\qquad+\max\Big\{ |X_3|^2,|Y_1||Y_2| \Big\}\max\Big\{ |X_1|,|Y_2| \Big\}\Big(\Big|X_1-Y_1\Big|+\Big|X_2-Y_2\Big|\Big)\\
 &\qquad+\max\Big\{ |X_4|^2,|Y_1||Y_2| \Big\}\max\Big\{ |X_1|,|Y_2| \Big\}\Big(\Big|X_1-Y_1\Big|+\Big|X_2-Y_2\Big|\Big).
\end{align*}
This completes the proof.
\end{proof}

 \begin{lem} [\textbf{Browder fixed point theorem} \cite{Browder}]\label{Browder}
 Let $(\mathcal{H},(\cdot,\cdot))$ be a finite dimensional inner product space, $||\cdot||$ the associated norm, and
 $\mathscr{F}: \mathcal{H}\rightarrow \mathcal{H}$ be continuous, such that
 \begin{align*}
 \exists\  \delta>0,\  \forall\  z\in \mathcal{H},\ \|z\|=\delta,\ s.t.\ \text{Re}( \mathscr{F}(z),z) \geq 0,
 \end{align*}
 there exists $z^{*}\in \mathcal{H}, \|z^{*}\|\leq \delta$ such that $\mathscr{F}(z^{*})=0$.
\end{lem}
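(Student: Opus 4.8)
The plan is to deduce the assertion from the classical Brouwer fixed point theorem by a contradiction argument. Since $\mathcal{H}$ is finite dimensional, the closed ball $\overline{B}_\delta:=\{z\in\mathcal{H}:\|z\|\leq\delta\}$ is compact and convex; if the inner product $(\cdot,\cdot)$ is Hermitian rather than Euclidean, I would first replace it by the real inner product $\textrm{Re}(\cdot,\cdot)$, which induces the same norm $\|\cdot\|$ and the same topology and satisfies $\textrm{Re}(w,w)=\|w\|^{2}$. With this reduction, Brouwer's theorem applies to every continuous self-map of $\overline{B}_\delta$.

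Next, suppose for contradiction that $\mathscr{F}(z)\neq 0$ for every $z\in\overline{B}_\delta$. Then $z\mapsto\|\mathscr{F}(z)\|$ is a strictly positive continuous function on the compact set $\overline{B}_\delta$, hence bounded away from $0$, so the map
\begin{align*}
g(z):=-\frac{\delta}{\|\mathscr{F}(z)\|}\,\mathscr{F}(z)
\end{align*}
is well defined and continuous on $\overline{B}_\delta$ and satisfies $\|g(z)\|=\delta$; in particular $g$ maps $\overline{B}_\delta$ into itself. By Brouwer's fixed point theorem there is $z^{*}\in\overline{B}_\delta$ with $g(z^{*})=z^{*}$. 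Reading off the norm gives $\|z^{*}\|=\delta$, so the hypothesis of the lemma forces $\textrm{Re}(\mathscr{F}(z^{*}),z^{*})\geq 0$. On the other hand, substituting the fixed-point identity $z^{*}=-\delta\,\mathscr{F}(z^{*})/\|\mathscr{F}(z^{*})\|$ gives
\begin{align*}
\textrm{Re}\big(\mathscr{F}(z^{*}),z^{*}\big)
&=\textrm{Re}\Big(\mathscr{F}(z^{*}),-\tfrac{\delta}{\|\mathscr{F}(z^{*})\|}\mathscr{F}(z^{*})\Big)\\
&=-\frac{\delta}{\|\mathscr{F}(z^{*})\|}\big\|\mathscr{F}(z^{*})\big\|^{2}
=-\delta\big\|\mathscr{F}(z^{*})\big\|<0,
\end{align*}
a contradiction. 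Hence $\mathscr{F}$ must vanish at some $z^{*}\in\overline{B}_\delta$, which is exactly the claim.

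The argument is essentially elementary, so there is no genuine ``hard part''; the one point that needs care is the passage to the real structure. Brouwer's theorem concerns balls in a real Euclidean space, so when $(\cdot,\cdot)$ is a complex (Hermitian) inner product one must regard $\mathcal{H}$ as a real vector space of twice the complex dimension, equip it with $\textrm{Re}(\cdot,\cdot)$, and verify that this changes neither the norm, nor the compactness of $\overline{B}_\delta$, nor the continuity of $g$. Everything else reduces to continuity of compositions together with the single identity $\textrm{Re}(w,w)=\|w\|^{2}$ used above.
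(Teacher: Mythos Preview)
Your argument is correct and is exactly the standard proof of this lemma via Brouwer's theorem. Note, however, that the paper does not supply its own proof of this statement: Lemma~\ref{Browder} is simply quoted from the literature with a citation to Browder's original article, so there is no ``paper's proof'' to compare against. Your write-up could therefore serve as a self-contained justification that the paper omits.
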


\begin{thm}\label{unique}
For the given initial values and sufficient small $\tau$, the numerical solutions of spectral Galerkin scheme \eqref{CNGLS1-1}--\eqref{CNGLS1-3} are uniquely solvable.
\end{thm}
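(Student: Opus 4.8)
The plan is to prove existence via the Browder fixed point theorem (Lemma \ref{Browder}) and uniqueness via a direct energy-type estimate on the difference of two putative solutions. Since the scheme \eqref{CNGLS1-1}--\eqref{CNGLS1-3} advances $(U_N^n,\Psi_N^n,\Phi_N^n)\to(U_N^{n+1},\Psi_N^{n+1},\Phi_N^{n+1})$, I proceed by induction on $n$: assuming $(U_N^n,\Psi_N^n,\Phi_N^n)\in X_N^0(\Omega)$ are known with the a priori bounds from Theorem \ref{BOUND} available, I show the next level is uniquely determined.

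First I would recast the one-step map as a zero-finding problem. Eliminating $\Psi_N^{n+1}$ using \eqref{CNGLS1-2}--\eqref{CNGLS1-3}, the unknowns reduce to $v:=U_N^{n+1}$ and $q:=\Phi_N^{n+1}$ (or, more symmetrically, work directly with the increments $U_N^{n+1/2}$, $\Phi_N^{n+1/2}$). Define $\mathcal{H}=X_N^0(\Omega)\times X_N^0(\Omega)$ equipped with the inner product $((v_1,q_1),(v_2,q_2))=\mathrm{Re}(v_1,v_2)+(q_1,q_2)$, and define $\mathscr{F}:\mathcal{H}\to\mathcal{H}$ so that $\mathscr{F}(v,q)=0$ is equivalent to \eqref{CNGLS1-1}--\eqref{CNGLS1-3}; continuity of $\mathscr{F}$ is clear since all nonlinear terms are polynomial. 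To apply Lemma \ref{Browder} I would test $\mathscr{F}(v,q)$ against $(v,q)$ — this mimics the energy-conservation computation in Theorem \ref{EP1-1}: taking the imaginary part of \eqref{CNGLS1-1} with $w_N=U_N^{n+1/2}$ controls $\|v\|$, while pairing \eqref{CNGLS1-3} (after the $\Psi$-elimination) with $\delta_t\Phi_N^{n+1/2}$ and \eqref{CNGLS1-1} with $\delta_tU_N^{n+1/2}$ reproduces the discrete energy identity, so that $\mathrm{Re}(\mathscr{F}(z),z)$ equals a positive-definite quadratic part plus lower-order terms bounded using the Fractional Gagliardo--Nirenberg inequality (Lemma \ref{fng}), Sobolev embedding (Lemma \ref{boundinf}), and Young's inequality, exactly as in Theorem \ref{BOUND}. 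Choosing $\delta$ large enough (independent of $\tau$ but using $\tau$ small to absorb the cross terms $\tfrac1\tau$-free because they telescope) gives $\mathrm{Re}(\mathscr{F}(z),z)\geq 0$ on $\|z\|=\delta$, hence a solution $z^\ast$ exists.

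For uniqueness, suppose $(U,\Psi,\Phi)$ and $(\widetilde U,\widetilde\Psi,\widetilde\Phi)$ both solve the scheme at level $n+1$ with the same data at level $n$. Write $e_U=U-\widetilde U$, etc., subtract the two systems, and test the $U$-equation with $e_U$ (imaginary part) and with $\delta_t$-type combinations to get an energy identity for the differences. The crucial point is that every nonlinear term difference is estimated by Lemma \ref{nonlinear}, whose right-hand sides involve factors $\max\{|U^{n+1}|,|\widetilde U^{n+1}|,\dots\}$ that are uniformly bounded in $L^\infty$ by Theorem \ref{BOUND} (applied to both solutions — one needs the a priori bound to hold for any solution produced by the scheme, which follows from the conservation laws of Theorem \ref{EP1-1} regardless of uniqueness). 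Thus one obtains an inequality of the form $\|e_U\|^2+\|e_\Psi\|^2+|e_\Phi|_{\alpha/2}^2+\eta^2\|e_\Phi\|^2+\lambda|e_U|_{\alpha/2}^2\leq \mathcal{C}\tau\big(\|e_U\|^2+\|e_\Phi\|^2+\cdots\big)$; for $\tau$ sufficiently small the right side is absorbed into the left, forcing all differences to vanish.

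The main obstacle is the quintic coupling term $(|U^{n+1}|^2+|U^n|^2)U^{n+1/2}\Phi^{n+1/2}$ and the quartic source $|U^{n+1}|^4+|U^n|^4$: these are not globally Lipschitz, so the argument genuinely relies on first establishing the $L^\infty$ bounds of Theorem \ref{BOUND} for \emph{every} solution of the scheme, and then invoking the tailored estimate \eqref{nonlinear1-4} of Lemma \ref{nonlinear} to linearize the difference. Care is also needed in the Browder step to ensure the radius $\delta$ can be fixed before letting $\tau\to 0$, which works because the problematic cross terms (the ones coupling $U$ and $\Phi$) appear in telescoped, $\tau$-independent form in the energy identity exactly as in the proof of Theorem \ref{EP1-1}, while the genuinely $\tau$-scaled remainder is handled by Young's inequality.
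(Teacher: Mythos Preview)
Your overall plan---Browder fixed point for existence, then a contraction-type estimate on the difference of two solutions using Theorem~\ref{BOUND} together with Lemma~\ref{nonlinear}---is exactly the paper's strategy, and your uniqueness sketch matches theirs (the paper in fact uses only the plain $L^2$ tests against $X_1^{n+1/2}-Y_1^{n+1/2}$ and $X_2^{n+1/2}-Y_2^{n+1/2}$, not an energy identity for the differences, but the outcome is the same).

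The existence step, however, contains a genuine gap. You say you will test $\mathscr{F}(v,q)$ against $(v,q)$, but then describe pairing \eqref{CNGLS1-3} with $\delta_t\Phi_N^{n+1/2}$ and \eqref{CNGLS1-1} with $\delta_t U_N^{n+1/2}$ to ``reproduce the discrete energy identity''. These are not tests against the unknown: if $v=U_N^{n+1/2}$ and $q=\Phi_N^{n+1/2}$, then $\delta_t U_N^{n+1/2}=\tfrac{2}{\tau}(v-U_N^n)$ and $\delta_t\Phi_N^{n+1/2}=\tfrac{2}{\tau}(q-\Phi_N^n)$ are affine shifts of $z$, and Lemma~\ref{Browder} requires $\mathrm{Re}(\mathscr{F}(z),z)$, not $\mathrm{Re}(\mathscr{F}(z),Az+b)$. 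Worse, the energy identity itself is \emph{neutral} ($\mathcal{E}^{n+1}=\mathcal{E}^n$), so even if one could rig the test this way it would not yield the coercivity $\mathrm{Re}(\mathscr{F}(z),z)\ge 0$ for large $\|z\|$ that Browder needs; the Gagliardo--Nirenberg/Young route you propose to close the argument is then circular, since it presupposes control of $|v|_{\alpha/2}$ that the energy identity is supposed to supply.

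The paper sidesteps this entirely. It multiplies the $U$-equation by $-\tfrac{\mathrm{i}\tau}{2}$ so that, when one tests with $s_1=U_N^{n+1/2}$ and takes the \emph{real} part, the bilinear form and \emph{all} nonlinear terms are purely imaginary and vanish, leaving simply $\mathrm{Re}(\mathscr{F}_1(\mathbf{s}),s_1)=\|s_1\|^2-\mathrm{Re}(U_N^n,s_1)\ge\tfrac12\|s_1\|^2-\tfrac12\|U_N^n\|^2$. For the wave equation (after eliminating $\Psi_N^{n+1}$) one tests directly with $s_2=\Phi_N^{n+1/2}$ and uses the $\tau^2$ prefactor on the nonlinear source, together with the known bounds on $U_N^n,\Phi_N^n,\Psi_N^n$ from Theorem~\ref{BOUND}, to get $(\mathscr{F}_2(\mathbf{s}),s_2)\ge\tfrac12\|s_2\|^2-\mathcal{C}$. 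No energy telescoping, no cross-cancellation between equations, and no Gagliardo--Nirenberg is needed in the Browder step.
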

\begin{proof}
 For the sake of readability, we leave the proof of this theorem to Appendix \ref{app}.
\end{proof}

\subsection{Convergence analysis}
\begin{lem}[\cite{dd1}]\label{ineq1}
For time sequences $w=\{ w^0,w^1,\cdots,w^n,w^{n+1} \}$ and $g=\{ g^0,g^1,\cdots,g^n,g^{n+1} \}$, there is
\begin{equation*}
\mid 2\tau\sum\limits^n_{k=0}g^k\delta_tw^{k+1/2} \mid\leq \tau\sum\limits^n_{k=1}\mid w^k \mid^2+\tau\sum\limits^{n-1}_{k=0}\mid \delta_t g^{k+1/2} \mid^2+\frac12\mid w^{n+1} \mid^2+2\mid g^n \mid^2+\mid w^0 \mid^2+\mid g^0 \mid^2.
\end{equation*}
\end{lem}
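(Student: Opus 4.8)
The plan is to recognize the left-hand quantity as a discrete summation by parts (Abel transformation) and then dispatch the resulting boundary and interior terms with the Young's inequality recalled just before the lemma. There is no variational or fixed-point content here: everything reduces to an algebraic identity followed by elementary estimates.

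First I would use the definition $\delta_t w^{k+1/2}=(w^{k+1}-w^k)/\tau$ to cancel the factor $\tau$, writing
\[
2\tau\sum_{k=0}^n g^k\,\delta_t w^{k+1/2}=2\sum_{k=0}^n g^k\big(w^{k+1}-w^k\big).
\]
Then I shift the index in the sum $\sum_{k=0}^n g^k w^{k+1}$ (set $j=k+1$) and recombine the two sums to obtain the Abel identity
\[
\sum_{k=0}^n g^k\big(w^{k+1}-w^k\big)=g^n w^{n+1}-g^0 w^0-\sum_{k=1}^n\big(g^k-g^{k-1}\big)w^k.
\]
Reintroducing the discrete derivative via $g^k-g^{k-1}=\tau\,\delta_t g^{k-1/2}$ and relabeling $k-1\mapsto k$ turns the last sum into $\tau\sum_{k=0}^{n-1}\delta_t g^{k+1/2}\,w^{k+1}$, so that
\[
2\tau\sum_{k=0}^n g^k\,\delta_t w^{k+1/2}=2g^n w^{n+1}-2g^0 w^0-2\tau\sum_{k=0}^{n-1}\delta_t g^{k+1/2}\,w^{k+1}.
\]

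Next I would take absolute values, apply the triangle inequality, and bound each of the three pieces with Young's inequality $ab\le\varepsilon a^2+\tfrac{1}{4\varepsilon}b^2$. For the boundary term $2|g^n||w^{n+1}|$ the choice $\varepsilon=1$ produces $\tfrac12|w^{n+1}|^2+2|g^n|^2$; for $2|g^0||w^0|$ the symmetric choice $\varepsilon=\tfrac12$ gives $|w^0|^2+|g^0|^2$; and inside the interior sum the termwise estimate $2|\delta_t g^{k+1/2}||w^{k+1}|\le|\delta_t g^{k+1/2}|^2+|w^{k+1}|^2$ yields, after multiplying by $\tau$ and summing, $\tau\sum_{k=0}^{n-1}|\delta_t g^{k+1/2}|^2+\tau\sum_{k=0}^{n-1}|w^{k+1}|^2$. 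A final relabeling $\sum_{k=0}^{n-1}|w^{k+1}|^2=\sum_{k=1}^n|w^k|^2$ reproduces the claimed right-hand side exactly.

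The argument is entirely elementary, so there is no genuine analytic obstacle; the only place demanding care is the index bookkeeping in the Abel transformation and the subsequent reindexings, where an off-by-one slip (for instance, confusing $\delta_t g^{k-1/2}$ with $\delta_t g^{k+1/2}$, or mismatching the summation limits) would corrupt both the constants and the ranges on the right-hand side. I would therefore verify the Abel identity by directly expanding both sides for $n=0$ and $n=1$ before trusting the general manipulation, and then confirm that the three Young's-inequality weights assemble precisely into the coefficients $\tfrac12$, $2$, $1$, $1$ appearing in the stated bound.
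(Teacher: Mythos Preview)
Your argument is correct: Abel summation followed by termwise Young's inequalities is exactly the right route, and your index bookkeeping checks out (in particular the relabeling $\sum_{k=1}^n\tau\,\delta_t g^{k-1/2}w^k=\tau\sum_{k=0}^{n-1}\delta_t g^{k+1/2}w^{k+1}$ and $\sum_{k=0}^{n-1}|w^{k+1}|^2=\sum_{k=1}^n|w^k|^2$). The only cosmetic slip is in the stated $\varepsilon$-values for the boundary terms---with the paper's convention $ab\le\varepsilon a^2+\tfrac{1}{4\varepsilon}b^2$ the two choices should be swapped---but the resulting inequalities $2|g^n||w^{n+1}|\le\tfrac12|w^{n+1}|^2+2|g^n|^2$ and $2|g^0||w^0|\le|w^0|^2+|g^0|^2$ are both correct. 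Note that the paper itself does not prove this lemma; it simply imports it from the cited reference~\cite{dd1}, so there is no in-paper proof to compare against, and your summation-by-parts derivation is the standard one.
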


\begin{lem}[\textbf{Gronwall inequality I} \cite{ineq2}]\label{ineq2}
Suppose that the discrete grid function $\{ w^n\mid~ n=0,1,2,\cdots,N_t;\\ N_t\tau=T \}$ satisfies the following inequality
\begin{equation*}
w^n-w^{n-1}\leq A\tau w^n+B\tau w^{n-1}+C_n\tau,
\end{equation*}
where $A$, $B$ and $C_n$ are non-negative constants, then
\begin{equation*}
\underset{1\leq n\leq N_t}{\max}\mid w^n\mid\leq\Big( w^0+\tau\sum\limits^{N_t}_{k=1}C_k \Big)e^{2(A+B)T},
\end{equation*}
where $\tau$ is sufficiently small, such that $(A+B)\tau<\frac12$, $(N_t>1)$.
\end{lem}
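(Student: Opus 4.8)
The plan is to convert the one-step recurrence into an explicit closed-form bound by iteration, and then to control the resulting geometric-type products by exponentials, the whole difficulty being concentrated in keeping the multiplicative constant at exactly $e^{2(A+B)T}$.

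First I would isolate $w^n$. Since the hypothesis $(A+B)\tau<\frac12$ forces $A\tau<\frac12$, the factor $1-A\tau$ is strictly positive (indeed $>\frac12$), so the defining inequality $w^n-w^{n-1}\le A\tau w^n+B\tau w^{n-1}+C_n\tau$ may be rearranged as
\begin{equation*}
w^n \le \frac{1+B\tau}{1-A\tau}\,w^{n-1} + \frac{C_n\tau}{1-A\tau}.
\end{equation*}
Writing $q:=\frac{1+B\tau}{1-A\tau}$ and $d_n:=\frac{C_n\tau}{1-A\tau}$, a routine induction on $n$ unrolls this into
\begin{equation*}
w^n \le q^{\,n}\,w^0 + \sum_{j=1}^n q^{\,n-j}\,d_j .
\end{equation*}

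Next I would record two elementary inequalities valid under $A\tau<\frac12$: namely $(1-A\tau)^{-1}\le 1+2A\tau\le e^{2A\tau}$ (the first because $(1+2A\tau)(1-A\tau)=1+A\tau(1-2A\tau)\ge 1$), together with $1+B\tau\le e^{B\tau}$. These give $q\le e^{2A\tau}e^{B\tau}=e^{(2A+B)\tau}\le e^{2(A+B)\tau}$, using $B\le 2B$. Hence the coefficient of $w^0$ obeys $q^{\,n}\le e^{2(A+B)n\tau}\le e^{2(A+B)T}$ since $n\tau\le T$.

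The one step that needs care is the sum, and this is where I expect the constant to be most delicate, because dividing by $1-A\tau$ inside the sum threatens to produce an extra multiple. Writing
\begin{equation*}
q^{\,n-j}\,\frac{1}{1-A\tau} = (1+B\tau)^{\,n-j}\,(1-A\tau)^{-(n-j+1)} \le e^{B(n-j)\tau}\,e^{2A(n-j+1)\tau},
\end{equation*}
I would exploit that $j\ge 1$ to conclude $n-j+1\le n$, so both exponents are bounded by $BT$ and $2AT$ respectively, whence $q^{\,n-j}(1-A\tau)^{-1}\le e^{(2A+B)T}\le e^{2(A+B)T}$ for every $j$. Consequently $\sum_{j=1}^n q^{\,n-j}d_j\le e^{2(A+B)T}\,\tau\sum_{j=1}^n C_j$. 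Combining the two bounds yields $w^n\le e^{2(A+B)T}\big(w^0+\tau\sum_{j=1}^n C_j\big)\le e^{2(A+B)T}\big(w^0+\tau\sum_{k=1}^{N_t}C_k\big)$, and taking the maximum over $1\le n\le N_t$ gives the claim. The only genuine subtlety is the observation $n-j+1\le n$ for $j\ge 1$, which is precisely what prevents the division by $1-A\tau$ in the sum from inflating the exponential constant beyond $e^{2(A+B)T}$.
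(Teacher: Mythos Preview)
The paper does not supply its own proof of this lemma: it is quoted as a known result from the cited reference (Zhou's book on discrete functional analysis). Your argument is a correct and self-contained proof. Rearranging to $w^n\le q\,w^{n-1}+d_n$, iterating, and then controlling $q$ and the extra factor $(1-A\tau)^{-1}$ via $(1-A\tau)^{-1}\le 1+2A\tau\le e^{2A\tau}$ under the smallness hypothesis $(A+B)\tau<\tfrac12$ is exactly the standard route, and your observation that $n-j+1\le n$ for $j\ge1$ is precisely what keeps the constant at $e^{2(A+B)T}$ rather than something slightly larger. One cosmetic point: the lemma as stated bounds $\max|w^n|$, whereas your iteration yields an upper bound on $w^n$; this matches only when the $w^n$ are nonnegative, which is the intended application (norms) and is implicit in the source as well.
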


\begin{lem}[\textbf{Gronwall inequality II} \cite{ineq2}]\label{ineq3}
Suppose that the discrete grid function $\{ w^n\mid~ n=0,1,2,\cdots,N_t;\\ N_t\tau=T \}$ satisfies the following inequality
\begin{equation*}
w^n\leq A+\tau\sum\limits^{n}_{k=1}B_kw^k,
\end{equation*}
where $A$ and $B_k~(k=0,1,2,\cdots,N_t)$ are non-negative constants, then
\begin{equation*}
\underset{1\leq n\leq N_t}{\max}\mid w^n\mid\leq A\exp({2\tau\sum\limits^{N_t}_{k=1}B_k}),
\end{equation*}
where $\tau$ is sufficiently small, such that $\tau\underset{1\leq k\leq N_t}{\max}B_k \leq1/2$.
\end{lem}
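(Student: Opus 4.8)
The plan is to remove the implicit nature of the inequality by passing to partial sums and then telescoping a one-step multiplicative recursion. First I would introduce the auxiliary sequence $S_n := A + \tau\sum_{k=1}^n B_k w^k$ with $S_0 = A$, so that the hypothesis becomes simply $w^n \leq S_n$. The advantage is that $S_n$ obeys a clean increment relation: since $S_n - S_{n-1} = \tau B_n w^n$ and $w^n \leq S_n$, the non-negativity of $\tau B_n$ gives $S_n - S_{n-1} \leq \tau B_n S_n$, that is, $(1-\tau B_n) S_n \leq S_{n-1}$.

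Second, I would invoke the smallness hypothesis $\tau\max_{1\leq k\leq N_t} B_k \leq 1/2$, which guarantees $1 - \tau B_n \geq 1/2 > 0$ for every $n$; this is precisely what is needed to divide through by $1-\tau B_n$ while preserving the direction of the inequality, yielding $S_n \leq (1-\tau B_n)^{-1} S_{n-1}$. The key elementary estimate is $(1-x)^{-1} \leq e^{2x}$ valid for $0 \leq x \leq 1/2$, which I would establish by setting $f(x) = e^{2x}(1-x)$ and observing that $f(0)=1$ while $f'(x) = e^{2x}(1-2x) \geq 0$ on $[0,1/2]$, so that $f(x) \geq 1$. Taking $x = \tau B_n$ turns the recursion into $S_n \leq e^{2\tau B_n} S_{n-1}$.

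Third, I would telescope this recursion from $S_0 = A$ down to index $n$, obtaining $S_n \leq A \prod_{k=1}^n e^{2\tau B_k} = A\exp\!\big(2\tau\sum_{k=1}^n B_k\big)$. Since $w^n \leq S_n$ and the grid function is understood to be non-negative (so that $|w^n| = w^n$), and since $\sum_{k=1}^n B_k \leq \sum_{k=1}^{N_t} B_k$, this gives $|w^n| \leq A\exp\!\big(2\tau\sum_{k=1}^{N_t} B_k\big)$ uniformly in $n$; taking the maximum over $1\leq n\leq N_t$ is the asserted bound.

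The computation is essentially routine, and I do not expect a genuinely hard step. The one place deserving attention is the conversion of the implicit bound into an explicit one: the smallness condition on $\tau$ is exactly the hypothesis that keeps $1-\tau B_n$ bounded away from zero and that validates the exponential majorization $(1-x)^{-1}\leq e^{2x}$. Thus the main obstacle, such as it is, lies in the careful bookkeeping of this threshold rather than in any analytic difficulty.
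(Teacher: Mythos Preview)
Your argument is correct and is the standard route to the discrete Gronwall inequality: pass to the partial sums $S_n$, derive the one-step multiplicative bound $(1-\tau B_n)S_n\le S_{n-1}$, use $\tau B_n\le 1/2$ to invert the factor and majorize by $e^{2\tau B_n}$, and telescope. The only tacit assumption you rely on is $w^n\ge 0$, which is harmless here since in every application of this lemma in the paper $w^n$ is a sum of squared norms.

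As for comparison with the paper: there is nothing to compare. The lemma is quoted verbatim from the reference of Zhou (the citation \cite{ineq2}) and no proof is given in the manuscript; it is used as a black box in the convergence argument. Your write-up therefore supplies a self-contained justification that the paper itself omits.
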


\begin{thm}\label{thmend}
Suppose $u\in C^3( H_0^{{\frac{\alpha}{2}}(\Omega)}(\Omega)\cap H^r(\Omega),[0,T] )$ and $\phi\in C^4( H_0^{{\frac{\alpha}{2}}(\Omega)}(\Omega)\cap H^r(\Omega),[0,T] )$ $(r> 1)$ are the solutions of \eqref{equ1}--\eqref{equ2}, $U_N^n$ and $\Phi_N^n$ are the solutions of \eqref{CNGLS1-1}--\eqref{CNGLS1-3}. For sufficiently small $\tau$ and $N^{-1}$, the CN--SGM scheme is unconditionally convergent in the sense that
\begin{equation*}
\left\{\begin{array}{ll}
\Big\| u^n-U^n_N \Big\| \leq \mathcal{C}(\tau^2+N^{-r}),\quad \Big\| \phi^n-\Phi^n_N \Big\|\leq \mathcal{C}(\tau^2+N^{-r}),
\quad \alpha\neq \frac32,\\
\Big\| u^n-U^n_N \Big\| \leq \mathcal{C}(\tau^2+N^{\epsilon-r}),\quad
\Big\| \phi^n-\Phi^n_N \Big\|\leq \mathcal{C}(\tau^2+N^{\epsilon-r}),
\quad \alpha= \frac32,\quad 0<\epsilon<\frac12
\end{array}
\right.
\end{equation*}
and
\begin{align*}
\Big\| \phi^n-\Phi^n_N \Big\|_\infty \leq \mathcal{C}(\tau^2+N^{\frac{\alpha}{2}-r}).
\end{align*}
\end{thm}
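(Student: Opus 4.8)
The plan is to carry out a standard energy-type error estimate on the fully discrete scheme, following the usual projection-plus-error split and the machinery (Gronwall inequalities, the nonlinear estimates of Lemma~\ref{nonlinear}, the a~priori bounds of Theorem~\ref{BOUND}) that the excerpt has already set up. Write $u^n-U_N^n = (u^n-\Pi_N^{\alpha/2,0}u^n) + (\Pi_N^{\alpha/2,0}u^n - U_N^n) =: \rho_u^n + \theta_u^n$, and similarly $\phi^n-\Phi_N^n = \rho_\phi^n + \theta_\phi^n$ and $\psi^n-\Psi_N^n = \rho_\psi^n + \theta_\psi^n$, where $\rho$ denotes the projection error and $\theta\in X_N^0(\Omega)$ the (computable) discrete error. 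Lemma~\ref{orgalpha} already gives $\|\rho_u^n\|+\|\rho_\phi^n\|\le \mathcal C N^{-r}$ (with the $N^{\epsilon-r}$ caveat when $\alpha=3/2$) and the corresponding $|\cdot|_{\alpha/2}$ bounds, so it suffices to estimate the $\theta$-parts. Subtracting the fully discrete scheme \eqref{CNGLS1-1}--\eqref{CNGLS1-3} from the consistency form \eqref{semidiscrete1}--\eqref{semidiscrete3} yields error equations for $\theta_u,\theta_\phi,\theta_\psi$ whose right-hand sides consist of the truncation terms $\widehat{\mathscr R}_i^n$ (each $O(\tau^2)$, with $\delta_t\widehat{\mathscr R}_2^{n+1/2}=O(\tau^2)$ by \eqref{derTRU}), projection-error terms involving $\rho$, and the differences of the nonlinear terms.

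The key steps, in order, are: (i) in the $\theta_u$-equation take $w_N=\theta_u^{n+1/2}$ and extract the imaginary part to obtain $\|\theta_u^{n+1}\|^2-\|\theta_u^n\|^2 \le \mathcal C\tau(\,\|\theta_u^{n+1}\|^2+\|\theta_u^n\|^2+\|\theta_\phi^{n+1}\|^2+\|\theta_\phi^n\|^2\,) + \mathcal C\tau(\tau^2+N^{-r})^2$, where the nonlinear differences $u\phi$ and $(|u|^2+|U_N|^2)u\phi$ are handled by Lemma~\ref{nonlinear} parts (1) and (4) together with the uniform $L^\infty$ bounds on $u,\phi$ (assumption \eqref{exactassume}) and on $U_N^n,\Phi_N^n$ (Theorem~\ref{BOUND}), so that every max-factor is $\le\mathcal C$; (ii) in the $\theta_\phi$-equation \eqref{CNGLS1-2} take $w_N=\theta_\psi^{n+1/2}$ or $\delta_t\theta_\phi^{n+1/2}$ appropriately; (iii) in the $\theta_\psi$-equation \eqref{CNGLS1-3} take $w_N=\delta_t\theta_\phi^{n+1/2}$ so that $\gamma\mathcal B(\theta_\phi^{n+1/2},\delta_t\theta_\phi^{n+1/2})$ and $\eta^2(\theta_\phi^{n+1/2},\delta_t\theta_\phi^{n+1/2})$ telescope into $|\theta_\phi^{n+1}|_{\alpha/2}^2-|\theta_\phi^n|_{\alpha/2}^2$ and $\|\theta_\phi^{n+1}\|^2-\|\theta_\phi^n\|^2$, and $(\delta_t\theta_\psi^{n+1/2},\delta_t\theta_\phi^{n+1/2})=(\delta_t\theta_\psi^{n+1/2},\theta_\psi^{n+1/2})+\text{(}\delta_t\rho_\phi,\delta_t\theta_\psi\text{ correction)}$ telescopes into $\|\theta_\psi^{n+1}\|^2-\|\theta_\psi^n\|^2$; here the term $(|U_N^{n+1}|^2+|U_N^n|^2 - |u^{n+1}|^2-|u^n|^2,\delta_t\theta_\phi^{n+1/2})$ cannot be absorbed pointwise because of the factor $\delta_t\theta_\phi^{n+1/2}$, so I would instead sum in $n$ first and apply Lemma~\ref{ineq1} (summation by parts) to move the difference quotient onto the smoother factor, turning it into $\tau\sum|\theta_\phi^k|^2 + \tau\sum|\delta_t(\text{nonlinear error})^{k+1/2}|^2 + \text{boundary terms}$; (iv) add the three resulting inequalities, sum over $n$, define $S^n := \|\theta_u^n\|^2 + \|\theta_\psi^n\|^2 + \gamma|\theta_\phi^n|_{\alpha/2}^2 + \eta^2\|\theta_\phi^n\|^2 + \lambda|\theta_u^n|_{\alpha/2}^2$ (using coercivity $|\cdot|_{\alpha/2}^2\ge C_2\|\cdot\|_{\alpha/2}^2 - \text{etc.}$ to control the missing $\|\theta\|_{\alpha/2}$ pieces), and arrive at $S^n \le \mathcal C(\tau^2+N^{-r})^2 + \mathcal C\tau\sum_{k=0}^{n}S^k$; (v) invoke the discrete Gronwall inequality (Lemma~\ref{ineq2} or \ref{ineq3}) to get $S^n\le\mathcal C(\tau^2+N^{-r})^2$, hence $\|\theta_u^n\|,\|\theta_\phi^n\|,\|\theta_\psi^n\|,|\theta_u^n|_{\alpha/2},|\theta_\phi^n|_{\alpha/2}\le\mathcal C(\tau^2+N^{-r})$; (vi) combine with the projection bounds via the triangle inequality to get the $L^2$ estimates, and finally apply the Sobolev embedding Lemma~\ref{boundinf} together with Lemma~\ref{orgalpha}'s $|\cdot|_{\alpha/2}$ bound to obtain $\|\phi^n-\Phi_N^n\|_\infty \le \mathcal C\|\phi^n-\Phi_N^n\|_{\alpha/2} \le \mathcal C(\tau^2+N^{\alpha/2-r})$, noting the weaker spatial rate $N^{\alpha/2-r}$ here comes from the $|\cdot|_{\alpha/2}$ projection estimate.

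A subtlety worth flagging is that, unlike many convergence proofs in the literature, no Lipschitz hypothesis on the nonlinearities is assumed; the role usually played by such a hypothesis is supplied here entirely by Theorem~\ref{BOUND}, which gives $\|U_N^n\|_\infty,\|\Phi_N^n\|_\infty\le\mathcal C$ \emph{unconditionally} (from the conserved mass/energy, not from an induction on the error), so all the $\max$-factors appearing in Lemma~\ref{nonlinear} when applied with $X_i=$ exact values and $Y_i=$ numerical values are bounded by a constant independent of $\tau$ and $N$. This is exactly what makes the estimate ``unconditional'': there is no inverse-inequality step and no mesh ratio constraint, only the requirement that $\tau$ be small enough for the Gronwall lemma (and that $\tau$, $N^{-1}$ be small enough for unique solvability, Theorem~\ref{unique}).

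The main obstacle I anticipate is step~(iii)--the handling of the coupling term $-\tfrac{\kappa_1}{2}(|U_N^{n+1}|^2+|U_N^n|^2 - |u^{n+1}|^2-|u^n|^2,\,\delta_t\theta_\phi^{n+1/2})$ and its quartic analogue. Because the test function is the difference quotient $\delta_t\theta_\phi^{n+1/2}$ rather than $\theta_\phi^{n+1/2}$, a naive Young's inequality produces $\tfrac{1}{\tau}\|\theta_\phi^{n+1}-\theta_\phi^n\|^2$-type terms that do not close the Gronwall loop. The resolution is to keep these terms under the sum over $n$ and apply Lemma~\ref{ineq1}, which trades $2\tau\sum g^k\delta_t w^{k+1/2}$ for quantities controlled by $\tau\sum|w^k|^2$, $\tau\sum|\delta_t g^{k+1/2}|^2$, and endpoint terms $\tfrac12|w^{n+1}|^2+2|g^n|^2+|w^0|^2+|g^0|^2$; this is precisely why the hypotheses demand $\phi\in C^4$ (so $\delta_t\widehat{\mathscr R}_2^{n+1/2}=O(\tau^2)$ and $\delta_t\rho_\phi^{k+1/2}$, $\delta_t(\text{nonlinear error})^{k+1/2}$ are controllable) rather than merely $C^3$. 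Getting the bookkeeping of these summation-by-parts boundary terms right, and checking that the endpoint contributions at $n+1$ can be absorbed into $\tfrac12 S^{n+1}$ on the left after choosing $\tau$ small, is the delicate part of the argument; everything else is routine telescoping, coercivity, and Gronwall.
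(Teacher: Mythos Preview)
Your overall architecture---projection split, test-function choices, telescoping of the bilinear terms, Lemma~\ref{nonlinear} plus the uniform $L^\infty$ bounds from Theorem~\ref{BOUND} to linearize the nonlinearities, and a closing Gronwall---matches the paper and is sound. You also correctly identify the one genuinely delicate point: the nonlinear coupling terms in the $\psi$-error equation are tested against $\delta_t\theta_\phi^{n+1/2}$, and a naive bound does not close.

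The gap is in your proposed resolution of that point. You suggest summing in $n$ and applying Lemma~\ref{ineq1} with $g^k=(|u^{k+1}|^2+|u^k|^2)-(|U_N^{k+1}|^2+|U_N^k|^2)$ and $w^k=\theta_\phi^k$. But Lemma~\ref{ineq1} then produces a term $\tau\sum_k\|\delta_t g^{k+1/2}\|^2$, and $g^k$ is \emph{not} a ``smoother factor'': it depends on the unknown error $\theta_u^k$ through $|U_N^k|^2$. A direct computation gives $\|\delta_t g^{k+1/2}\|\le (C/\tau)(\|\theta_u^{k+2}\|+\|\theta_u^k\|)+\ldots$, so $\tau\sum_k\|\delta_t g^{k+1/2}\|^2$ carries a factor $\tau^{-1}$ in front of $\sum_k\|\theta_u^k\|^2$ and the Gronwall loop does not close. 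The endpoint term $2\|g^n\|^2\le C\|\theta_u^{n+1}\|^2+\ldots$ is equally bad: it has an $O(1)$ (not $O(\tau)$) coefficient and competes with $\|\theta_u^{n+1}\|^2$ on the left. The $C^4$ regularity of $\phi$ does nothing for you here, since the obstruction lives in the $u$-component of the error.

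The paper sidesteps this entirely by a different device. From \eqref{CNGLS1-2} one has the \emph{exact} discrete identity $\delta_t\Phi_N^{n+1/2}=\Psi_N^{n+1/2}$, whence
\[
\delta_t\theta_\phi^{n+1/2}=\theta_\psi^{n+1/2}+\Pi_N^{\alpha/2,0}\delta_t\phi^{n+1/2}-\Pi_N^{\alpha/2,0}\psi^{n+1/2}
=\theta_\psi^{n+1/2}+O(\tau^2+N^{-r}),
\]
using $\|\delta_t\phi^{n+1/2}-\psi^{n+1/2}\|\le\|\widehat{\mathscr R}_2^n\|$ and Lemma~\ref{orgalpha}. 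Substituting this into the nonlinear terms replaces the bad factor $\delta_t\theta_\phi^{n+1/2}$ by the good factor $\theta_\psi^{n+1/2}$ plus a harmless remainder, and then a straightforward Cauchy--Schwarz/Young bound gives
\[
\Big|\big(|u^{n+1}|^2+|u^n|^2-|U_N^{n+1}|^2-|U_N^n|^2,\ \delta_t\theta_\phi^{n+1/2}\big)\Big|
\le \mathcal{C}\big(\|\theta_u^{n+1}\|^2+\|\theta_u^n\|^2+\|\theta_\psi^{n+1}\|^2+\|\theta_\psi^n\|^2+\tau^4+N^{-2r}\big),
\]
and similarly for the quartic term. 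Lemma~\ref{ineq1} and the $C^4$ hypothesis on $\phi$ \emph{are} used in the paper, but for a different term: the identity $(\delta_t\theta_\psi^{n+1/2},\delta_t\theta_\phi^{n+1/2})=(\theta_\psi^{n+1/2},\delta_t\theta_\psi^{n+1/2})+(\mathscr R_2^n,\delta_t\theta_\psi^{n+1/2})$ (from \eqref{errorF1-2} with $w_N=\delta_t\theta_\psi^{n+1/2}$) leaves the residual $(\mathscr R_2^n,\delta_t\theta_\psi^{n+1/2})$, and it is \emph{this} term---where the ``smooth factor'' really is the known consistency error $\mathscr R_2^n$---to which Lemma~\ref{ineq1} is applied, requiring $\|\delta_t\mathscr R_2^{n+1/2}\|=O(\tau^2+N^{-r})$ and hence $\phi\in C^4$. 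Once you make this substitution, the rest of your outline (sum, Gronwall~II, triangle inequality, Sobolev embedding for the $L^\infty$ bound on $\phi$) goes through exactly as you describe.
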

\begin{proof}
Denote the error functions
\begin{align*}
&\varepsilon_u^n=u^n-U_N^n=( u^n-\Pi_N^{\alpha/2,0}u^n )+(\Pi_N^{\alpha/2,0}u^n-U_N^n)=\eta_u^n+\xi_u^n,\\
&\varepsilon_\phi^n=\phi^n-\Phi_N^n=( \phi^n-\Pi_N^{\alpha/2,0}\phi^n )+(\Pi_N^{\alpha/2,0}\phi^n-\Phi_N^n)=\eta_\phi^n+\xi_\phi^n,\\
&\varepsilon_\psi^n=\psi^n-\Psi_N^n=( \psi^n-\Pi_N^{\alpha/2,0}\psi^n )+(\Pi_N^{\alpha/2,0}\psi^n-\Phi_N^n)=\eta_\psi^n+\xi_\psi^n,\\
&\varepsilon_u^0=u_0-\Pi_N^{\alpha/2,0}u_0,\qquad \varepsilon_\phi^0=\phi_0-\Pi_N^{\alpha/2,0}\phi_0,
\qquad \varepsilon_\psi^0=\phi_1-\Pi_N^{\alpha/2,0}\phi_1.
\end{align*}
Noticing that
\begin{align*}
\xi_u^{0}=\Pi_N^{\alpha/2,0}u_0-U_N^0=\Pi_N^{\alpha/2,0}u_0-\Pi_N^{\alpha/2,0}u_0=0,\\
\xi_\phi^{0}=\Pi_N^{\alpha/2,0}\phi_0-\Phi_N^0=\Pi_N^{\alpha/2,0}\phi_0-\Pi_N^{\alpha/2,0}\phi_0=0,\\
\xi_\psi^{0}=\Pi_N^{\alpha/2,0}\phi_1-\Psi_N^0=\Pi_N^{\alpha/2,0}\phi_1-\Pi_N^{\alpha/2,0}\phi_1=0.
\end{align*}
Subtracting \eqref{CNGLS1-1}--\eqref{CNGLS1-3} from \eqref{semidiscrete1}--\eqref{semidiscrete3}, in view of the definition of orthogonal projection operator $\Pi_N^{{\alpha/2},0}$, we have
\begin{align}
&\textrm{i} (\delta_t\xi_u^{n+1/2},w_N)-\frac{\lambda}{2}\mathcal{B}(\xi_u^{n+1/2},w_N)
+\kappa_1 (u^{n+1/2}\phi^{n+1/2}-U_N^{n+1/2}\Phi_N^{n+1/2},w_N)\label{errorF1-1}\nonumber\\
&\quad+\kappa_2
\Big((|u^{n+1}|^2+|u^{n}|^2)u^{n+1/2}\phi^{n+1/2}-(|U_N^{n+1}|^2+|U_N^{n}|^2)U_N^{n+1/2}\Phi_N^{n+1/2},w_N\Big)\\
&\quad=(\mathscr{R}_1^{n},w_N),\quad \forall w_N \in X_N^0(\Omega),\nonumber\\
&(\delta_t \xi_{\phi}^{n+1/2},w_N)=(\xi_{\psi}^{n+1/2},w_N)+(\mathscr{R}_2^{n},w_N),\quad \forall w_N \in X_N^0(\Omega),\label{errorF1-2}\\
&(\delta_t \xi_{\psi}^{n+1/2},w_N)+\gamma\mathcal{B}(\xi_{\phi}^{n+1/2},w_N)+\eta^2 (\xi_{\phi}^{n+1/2},w_N)\nonumber\\
&\qquad\qquad-\frac{\kappa_1}{2} \Big((|u^{n+1}|^2+|u^{n}|^2)-(|U_N^{n+1}|^2+|U_N^{n}|^2),w_N\Big)\label{errorF1-3}\\
&\qquad\qquad-\frac{\kappa_2}{2} \Big((|u^{n+1}|^4+|u^{n}|^4)-(|U_N^{n+1}|^4+|U_N^{n}|^4),w_N\Big)=(\mathscr{R}_3^{n},w_N),\quad \forall w_N \in X_N^0(\Omega),\nonumber
\end{align}
where
\begin{align*}
&\mathscr{R}_1^{n}=\widehat{\mathscr{R}}_1^{n}-\textrm{i} \delta_t\eta_u^{n+1/2},\\
&\mathscr{R}_2^{n}=\widehat{\mathscr{R}}_2^{n}+\eta_{\psi}^{n+1/2}-\delta_t \eta_{\phi}^{n+1/2},\\
&\mathscr{R}_3^{n}=\widehat{\mathscr{R}}_3^{n}-\delta_t \eta_{\psi}^{n+1/2}-\eta^2\eta_{\phi}^{n+1/2}.
\end{align*}
For the case $\alpha\neq \frac32$, according to Taylor's expansion and Lemma \ref{orgalpha}, we have
\begin{align*}\Big\| \eta_\psi^{n+\frac12}\Big\|&\leq\Big\| \eta_\psi^{n+\frac12}- \eta_\psi(\cdot,t_{n+1/2})\Big\|+\Big\| \eta_\psi(\cdot,t_{n+1/2})\Big\|\leq \mathcal{C}(\tau^2+N^{-r}),\\
\Big\|\delta_t \eta_\phi^{n+\frac12}\Big\|&\leq\Big\|\delta_t \eta_\phi^{n+\frac12}-\partial_t \eta_\phi(\cdot,t_{n+1/2})\Big\|+\Big\|\partial_t \eta_\phi(\cdot,t_{n+1/2})\Big\|\leq \mathcal{C}(\tau^2+N^{-r}).
\end{align*}
Similarly, we have
\begin{align*}
\Big\| \eta_\phi^{n+\frac12}\Big\|\leq \mathcal{C}(\tau^2+N^{-r}),\quad \Big\|\delta_t \eta_\psi^{n+\frac12}\Big\|\leq \mathcal{C}(\tau^2+N^{-r}),\quad \Big\|\delta_t \eta_u^{n+\frac12}\Big\|\leq \mathcal{C}(\tau^2+N^{-r}).
\end{align*}
Thus, we obtain
\begin{align*}
\Big\| \mathscr{R}_1^{n} \Big\| \leq \mathcal{C}(\tau^2+N^{-r}),\quad \Big\| \mathscr{R}_2^{n} \Big\| \leq \mathcal{C}(\tau^2+N^{-r}),\quad \Big\| \mathscr{R}_3^{n} \Big\| \leq \mathcal{C}(\tau^2+N^{-r}).
\end{align*}
By admitting the above relations and \eqref{derTRU}, we deduce
\begin{align*}
\Big\| \delta_t\mathscr{R}_2^{n+1/2} \Big\|\leq \mathcal{C}(\tau^2+N^{-r}).
\end{align*}

Selecting $w_N=\xi_u^{n+1/2}$ in \eqref{errorF1-1} and taking the imaginary part, we deduce
\begin{align}\label{error1}
&\textrm{Im}\Big\{ \textrm{i} (\delta_t\xi_u^{n+1/2},\xi_u^{n+1/2})\Big\}=-\kappa_1 \textrm{Im}(u^{n+1/2}\phi^{n+1/2}-U_N^{n+1/2}\Phi_N^{n+1/2},\xi_u^{n+1/2})\nonumber\\
&\quad\quad-\kappa_2
\textrm{Im}\Big((|u^{n+1}|^2+|u^{n}|^2)u^{n+1/2}\phi^{n+1/2}-(|U_N^{n+1}|^2+|U_N^{n}|^2)U_N^{n+1/2}\Phi_N^{n+1/2},\xi_u^{n+1/2}\Big)\\
&\quad\quad+\textrm{Im}(\mathscr{R}_1^{n},\xi_u^{n+1/2}).\nonumber
\end{align}
By some careful calculations, we obtain
\begin{align}\label{error1-1}
\textrm{Im}\Big\{ \textrm{i} (\delta_t\xi_u^{n+1/2},\xi_u^{n+1/2})\Big\}=\frac{1}{2\tau}\Big( \Big\| \xi_u^{n+1} \Big\|^2
-\Big\| \xi_u^{n} \Big\|^2 \Big).
\end{align}
Employing Lemma \ref{nonlinear}, Lemma \ref{orgalpha} and the Cauchy-Schwarz inequality, we conclude
\begin{align}\label{error1-2}
-\kappa_1 &\textrm{Im}(u^{n+1/2}\phi^{n+1/2}-U_N^{n+1/2}\Phi_N^{n+1/2},\xi_u^{n+1/2})\leq \kappa_1\Big\| u^{n+1/2}\phi^{n+1/2}-U_N^{n+1/2}\Phi_N^{n+1/2} \Big\|  \Big\|\xi_u^{n+1/2}\Big\|\nonumber\\
&\leq\mathcal{C}\Big( \Big\| u^{n+1/2}\phi^{n+1/2}-U_N^{n+1/2}\Phi_N^{n+1/2} \Big\|^2+\Big\|\xi_u^{n+1/2}\Big\|^2 \Big)\nonumber\\
&\leq \mathcal{C}\Big( \Big\| u^{n+1/2}-U_N^{n+1/2} \Big\|^2+ \Big\| \phi^{n+1/2}-\Phi_N^{n+1/2} \Big\|^2+\Big\|\xi_u^{n+1/2}\Big\|^2 \Big)\\
&\leq \mathcal{C}\Big( \Big\| \xi_u^{n+1/2} \Big\|^2+ \Big\| \xi_\phi^{n+1/2} \Big\|^2+N^{-2r}\Big)\nonumber\\
&\leq \mathcal{C}\Big( \Big\| \xi_u^{n+1} \Big\|^2+\Big\| \xi_u^{n} \Big\|^2+ \Big\| \xi_\phi^{n+1} \Big\|^2+\Big\|\xi_\phi^{n}\Big\|^2+N^{-2r}\Big),\nonumber
\end{align}
\begin{align}\label{error1-3}
-\kappa_2
\textrm{Im}&\Big((|u^{n+1}|^2+|u^{n}|^2)u^{n+1/2}\phi^{n+1/2}-(|U_N^{n+1}|^2+|U_N^{n}|^2)U_N^{n+1/2}\Phi_N^{n+1/2},\xi_u^{n+1/2}\Big)\nonumber\\
&\leq \kappa_2\Big\| (|u^{n+1}|^2+|u^{n}|^2)u^{n+1/2}\phi^{n+1/2}-(|U_N^{n+1}|^2+|U_N^{n}|^2)U_N^{n+1/2}\Phi_N^{n+1/2} \Big\| \Big\| \xi_u^{n+1/2} \Big\|\\
&\leq \mathcal{C}\Big( \Big\| \xi_u^{n+1} \Big\|^2+\Big\| \xi_u^{n} \Big\|^2+ \Big\| \xi_\phi^{n+1} \Big\|^2+\Big\|\xi_\phi^{n}\Big\|^2+N^{-2r}\Big)\nonumber
\end{align}
and
\begin{align}\label{error1-4}
\textrm{Im}(\mathscr{R}_1^{n},\xi_u^{n+1/2})\leq \mathcal{C}\Big( \Big\| \xi_u^{n+1} \Big\|^2+\Big\| \xi_u^{n} \Big\|^2+\tau^4 \Big).
\end{align}
Substituting \eqref{error1-1}--\eqref{error1-4} into \eqref{error1}, one has
\begin{align}\label{error1-5}
\Big\| \xi_u^{n+1} \Big\|^2 \leq \Big\| \xi_u^{n} \Big\|^2+\mathcal{C}\tau\Big( \Big\| \xi_u^{n+1} \Big\|^2+\Big\| \xi_u^{n} \Big\|^2+ \Big\| \xi_\phi^{n+1} \Big\|^2+\Big\|\xi_\phi^{n}\Big\|^2+\tau^4+N^{-2r}\Big).
\end{align}
Choosing $w_N=\xi_{\phi}^{n+1/2}$ in \eqref{errorF1-2}, we have
\begin{align}\label{error2-1}
\Big\| \xi_\phi^{n+1} \Big\|^2-\Big\| \xi_\phi^{n} \Big\|^2&=2\tau(\xi_{\psi}^{n+1/2},\xi_{\phi}^{n+1/2})+2\tau(\mathscr{R}_2^{n},\xi_{\phi}^{n+1/2})\\
&\leq \mathcal{C}\tau\Big( \Big\| \xi_\phi^{n+1} \Big\|^2+\Big\| \xi_\phi^{n} \Big\|^2
+\Big\| \xi_\psi^{n+1} \Big\|^2+\Big\| \xi_\psi^{n} \Big\|^2+\tau^4+N^{-2r} \Big).\nonumber
\end{align}
Employing the Gronwall inequality I (Lemma \ref{ineq2}), one obtains
\begin{align}\label{error2-2}
\Big\|\xi_\phi^{n+1}\Big\|^2 \leq \mathcal{C}\tau \sum\limits^{n}_{k=0}\Big\|\xi_\psi^{k+1}\Big\|^2+\mathcal{C}(\tau^4+N^{-2r}),
\end{align}
which implies
\begin{align}\label{ker}
\tau\sum\limits^{n}_{k=0} \Big\| \xi_\phi^{k+1} \Big\|^2&\leq
\mathcal{C}\tau^2\sum\limits^{n}_{k=0}\sum\limits^{k}_{j=0}\Big\| \xi_\psi^{j+1} \Big\|^2  + \mathcal{C}(\tau^4+N^{-2r})\nonumber\\
&\leq
\mathcal{C}\tau^2\sum\limits^{n}_{k=0}\sum\limits^{n}_{j=0}\Big\| \xi_\psi^{j+1} \Big\|^2  + \mathcal{C}(\tau^4+N^{-2r})&\\
&\leq
\mathcal{C}\tau\sum\limits^{n}_{k=0}\Big\| \xi_\psi^{k+1} \Big\|^2  + \mathcal{C}(\tau^4+N^{-2r}).\nonumber
\end{align}
Taking $w_N=\delta_t \xi_{\phi}^{n+1/2}$ in \eqref{errorF1-3}, we have
\begin{align}\label{error3}
&(\delta_t \xi_{\psi}^{n+1/2},\delta_t \xi_{\phi}^{n+1/2})+\gamma\frac{\Big| \xi_{\phi}^{n+1} \Big|^2_{\alpha/2}-\Big| \xi_{\phi}^{n} \Big|^2_{\alpha/2}}{2\tau}
+\eta^2 \frac{\Big\| \xi_{\phi}^{n+1} \Big\|^2-\Big\| \xi_{\phi}^{n} \Big\|^2}{2\tau}\nonumber\\
&\qquad\qquad-\frac{\kappa_1}{2} \Big((|u^{n+1}|^2+|u^{n}|^2)-(|U_N^{n+1}|^2+|U_N^{n}|^2),\delta_t \xi_{\phi}^{n+1/2}\Big)\\
&\qquad\qquad-\frac{\kappa_2}{2} \Big((|u^{n+1}|^4+|u^{n}|^4)-(|U_N^{n+1}|^4+|U_N^{n}|^4),\delta_t \xi_{\phi}^{n+1/2}\Big)=(\mathscr{R}_3^{n},\delta_t \xi_{\phi}^{n+1/2}).\nonumber
\end{align}
It follows from \eqref{errorF1-2} that
\begin{align}\label{error3-1}
\Big( \delta_t\xi_\psi^{n+1/2} ,\delta_t\xi_\phi^{n+1/2}\Big)&=( \xi_\psi^{n+\frac12},\delta_t\xi_\psi^{n+1/2} )+(\mathscr{R}_2^{n},\delta_t\xi_\psi^{n+1/2})\nonumber\\
&=\frac{\Big\| \xi_\psi^{n+1} \Big\|^2-\Big\| \xi_\psi^{n} \Big\|^2}{2\tau}+(\mathscr{R}_2^{n},\delta_t\xi_\psi^{n+1/2}).
\end{align}
Setting $w_N=\delta_t \Phi_N^{n+\frac12}- \Psi_N^{n+\frac12}$ in \eqref{CNGLS1-2}, we deduce
\begin{align*}
\delta_t \Phi_N^{n+\frac12}= \Psi_N^{n+\frac12}.
\end{align*}
It is trivial to check that
\begin{align*}
(\xi_\psi^{n+1/2},w_N)=(\delta_t\xi_\phi^{n+1/2}+\Pi_N^{\alpha/2,0}\psi^{n+1/2}-\Pi_N^{\alpha/2,0}\delta_t\phi^{n+1/2},w_N),\quad
\forall w_N\in X_N^0(\Omega),
\end{align*}
which implies
\begin{align*}
\delta_t\xi_\phi^{n+1/2}=\xi_\psi^{n+1/2}+\Pi_N^{\alpha/2,0}\delta_t\phi^{n+1/2}-\Pi_N^{\alpha/2,0}\psi^{n+1/2}.
\end{align*}
By employing Lemma \ref{nonlinear}, Lemma \ref{orgalpha} and the Cauchy-Schwarz inequality that
\begin{align}\label{error3-2}
-&\frac{\kappa_1}{2} \Big((|u^{n+1}|^2+|u^{n}|^2)-(|U_N^{n+1}|^2+|U_N^{n}|^2),\delta_t \xi_{\phi}^{n+1/2}\Big)\nonumber\\
&\leq \frac{\kappa_1}{2} \Big\| (|u^{n+1}|^2+|u^{n}|^2)-(|U_N^{n+1}|^2+|U_N^{n}|^2) \Big\|  \Big\| \delta_t \xi_{\phi}^{n+1/2} \Big\|\nonumber\\
&\leq \mathcal{C}\Big(  \Big\| (|u^{n+1}|^2+|u^{n}|^2)-(|U_N^{n+1}|^2+|U_N^{n}|^2) \Big\|^2+\Big\| \delta_t \xi_{\phi}^{n+1/2} \Big\|^2\Big)\nonumber\\
&\leq \mathcal{C}\Big( \Big\| u^{n+1}-U_N^{n+1} \Big\|^2+\Big\| u^{n}-U_N^{n} \Big\|^2+\Big\| \delta_t \xi_{\phi}^{n+1/2} \Big\|^2 \Big)\nonumber\\
&\leq \mathcal{C}\Big( \Big\| \xi_u^{n+1} \Big\|^2+\Big\| \xi_u^{n} \Big\|^2+\Big\| \xi_\psi^{n+1/2}+\Pi_N^{\alpha/2,0}\delta_t\phi^{n+1/2}-\Pi_N^{\alpha/2,0}\psi^{n+1/2} \Big\|^2+N^{-2r} \Big)\nonumber\\
&= \mathcal{C}\Big( \Big\| \xi_u^{n+1} \Big\|^2+\Big\| \xi_u^{n} \Big\|^2+\Big\| \xi_\psi^{n+1/2}+\Pi_N^{\alpha/2,0}\delta_t\phi^{n+1/2}-\delta_t\phi^{n+1/2}\\
&\quad+\delta_t\phi^{n+1/2}-\psi^{n+1/2}+\psi^{n+1/2}
-\Pi_N^{\alpha/2,0}\psi^{n+1/2} \Big\|^2+N^{-2r} \Big)\nonumber\\
&\leq\mathcal{C}\Big( \Big\| \xi_u^{n+1} \Big\|^2+\Big\| \xi_u^{n} \Big\|^2+\Big\| \xi_\psi^{n+1/2}\Big\|^2+\Big\| \Pi_N^{\alpha/2,0}\delta_t\phi^{n+1/2}-\delta_t\phi^{n+1/2}\Big\|^2\nonumber\\
&\quad+\Big\|\delta_t\phi^{n+1/2}-\psi^{n+1/2}\Big\|^2+\Big\|\psi^{n+1/2}
-\Pi_N^{\alpha/2,0}\psi^{n+1/2} \Big\|^2+N^{-2r} \Big)\nonumber\\
&\leq\mathcal{C}\Big( \Big\| \xi_u^{n+1} \Big\|^2+\Big\| \xi_u^{n} \Big\|^2+\Big\| \xi_\psi^{n+1}\Big\|^2+\Big\| \xi_\psi^{n}\Big\|^2
+\Big\|\delta_t\phi^{n+1/2}-\psi^{n+1/2}\Big\|^2+N^{-2r}\Big)\nonumber\\
&\leq\mathcal{C}\Big( \Big\| \xi_u^{n+1} \Big\|^2+\Big\| \xi_u^{n} \Big\|^2+\Big\| \xi_\psi^{n+1}\Big\|^2+\Big\| \xi_\psi^{n}\Big\|^2
+\tau^4+N^{-2r}\Big),\nonumber
\end{align}
where the last inequality has used the following estimate: by taking $w_N=\delta_t {\phi}^{n+1/2}-{\psi}^{n+1/2}$ in \eqref{semidiscrete2}, one has
\begin{align*}
\Big\|\delta_t {\phi}^{n+1/2}-{\psi}^{n+1/2}\Big\|\leq\Big\|\mathscr{R}_2^{n}\Big\|\leq \mathcal{C}(\tau^4+N^{-2r}).
\end{align*}
Similar estimates, we obtain
\begin{align}\label{error3-3}
-\frac{\kappa_2}{2} \Big((|u^{n+1}|^4&+|u^{n}|^4)-(|U_N^{n+1}|^4+|U_N^{n}|^4),\delta_t \xi_{\phi}^{n+1/2}\Big)\nonumber\\
&\leq\mathcal{C}\Big( \Big\| \xi_u^{n+1} \Big\|^2+\Big\| \xi_u^{n} \Big\|^2+\Big\| \xi_\psi^{n+1}\Big\|^2+\Big\| \xi_\psi^{n}\Big\|^2
+\tau^4+N^{-2r}\Big)
\end{align}
and
\begin{align}\label{error3-4}
(\mathscr{R}_3^{n},\delta_t \xi_{\phi}^{n+1/2})&\leq\mathcal{C}\Big( \Big\| \xi_\psi^{n+1}\Big\|^2+\Big\| \xi_\psi^{n}\Big\|^2
+\tau^4+N^{-2r}\Big).
\end{align}
Substituting \eqref{error3-1}--\eqref{error3-4} into \eqref{error3}, we arrive at
\begin{align}\label{error4}
&\Big\| \xi_\psi^{n+1} \Big\|^2+\gamma\Big|\xi_{\phi}^{n+1} \Big|^2_{\alpha/2}+\eta^2\Big\| \xi_{\phi}^{n+1} \Big\|^2\nonumber\\
&\leq \Big\| \xi_\psi^{n} \Big\|^2+\gamma\Big|\xi_{\phi}^{n} \Big|^2_{\alpha/2}+\eta^2\Big\| \xi_{\phi}^{n} \Big\|^2
+\mathcal{C}\tau\Big( \Big\| \xi_u^{n+1} \Big\|^2+\Big\| \xi_u^{n} \Big\|^2+\Big\| \xi_\psi^{n+1}\Big\|^2+\Big\| \xi_\psi^{n}\Big\|^2
+\tau^4+N^{-2r}\Big)\\
&\qquad -2\tau(\mathscr{R}_2^{n},\delta_t\xi_\psi^{n+1/2}).\nonumber
\end{align}
Summing up \eqref{error1-5} and \eqref{error4}, we have
\begin{align}\label{error5}
&\Big\| \xi_u^{n+1} \Big\|^2+\Big\| \xi_\psi^{n+1} \Big\|^2+\gamma\Big|\xi_{\phi}^{n+1} \Big|^2_{\alpha/2}+\eta^2\Big\| \xi_{\phi}^{n+1} \Big\|^2\nonumber\\
&\leq \Big\| \xi_u^{n} \Big\|^2+\Big\| \xi_\psi^{n} \Big\|^2+\gamma\Big|\xi_{\phi}^{n} \Big|^2_{\alpha/2}+\eta^2\Big\| \xi_{\phi}^{n} \Big\|^2
+\mathcal{C}\tau\Big( \Big\| \xi_u^{n+1} \Big\|^2+\Big\| \xi_u^{n} \Big\|^2+\Big\| \xi_\phi^{n+1}\Big\|^2+\Big\| \xi_\phi^{n}\Big\|^2\\
&\qquad+\Big\| \xi_\psi^{n+1}\Big\|^2+\Big\| \xi_\psi^{n}\Big\|^2+\tau^4+N^{-2r}\Big)
-2\tau(\mathscr{R}_2^{n},\delta_t\xi_\psi^{n+1/2})\nonumber\\
&\leq \mathcal{C}\tau\sum\limits^{n}_{k=0}\Big( \Big\| \xi_u^{k+1} \Big\|^2+\Big\| \xi_\phi^{k+1} \Big\|^2+\Big\| \xi_\psi^{k+1} \Big\|^2\Big)+\mathcal{C}\Big( \tau^4+N^{-2r} \Big)-2\tau\sum\limits^{n}_{k=0}(\mathscr{R}_2^{k},\delta_t\xi_\psi^{k+1/2}).\nonumber
\end{align}
It follows from \eqref{ker} and Lemma \ref{ineq1} that
\begin{align}\label{error6}
&\Big\| \xi_u^{n+1} \Big\|^2+\Big\| \xi_\psi^{n+1} \Big\|^2+\gamma\Big|\xi_{\phi}^{n+1} \Big|^2_{\alpha/2}+\eta^2\Big\| \xi_{\phi}^{n+1} \Big\|^2\nonumber\\
&\leq \mathcal{C}\tau\sum\limits^{n}_{k=0}\Big( \Big\| \xi_u^{k+1} \Big\|^2+\Big\| \xi_\psi^{k+1} \Big\|^2\Big)+\mathcal{C}\Big( \tau^4+N^{-2r} \Big)-2\tau\sum\limits^{n}_{k=0}(\mathscr{R}_2^{k},\delta_t\xi_\psi^{k+1/2})\\
&\leq \mathcal{C}\tau\sum\limits^{n}_{k=0}\Big( \Big\| \xi_u^{k+1} \Big\|^2+\Big\| \xi_\psi^{k+1} \Big\|^2\Big)+\frac12 \Big\|\xi_\psi^{n+1}\Big\|^2+\mathcal{C}\Big( \tau^4+N^{-2r} \Big).\nonumber
\end{align}
Then, we reformulate \eqref{error6} as
\begin{align}\label{error7}
&\Big\| \xi_u^{n+1} \Big\|^2+\frac12\Big\| \xi_\psi^{n+1} \Big\|^2+\gamma\Big|\xi_{\phi}^{n+1} \Big|^2_{\alpha/2}+\eta^2\Big\| \xi_{\phi}^{n+1} \Big\|^2\nonumber\\
&\leq \mathcal{C}\tau\sum\limits^{n}_{k=0}\Big( \Big\| \xi_u^{k+1} \Big\|^2+\frac12\Big\| \xi_\psi^{k+1} \Big\|^2\Big)+\mathcal{C}\Big( \tau^4+N^{-2r} \Big)\\
&\leq \mathcal{C}\tau\sum\limits^{n}_{k=0}\Big( \Big\| \xi_u^{k+1} \Big\|^2+\frac12\Big\| \xi_\psi^{k+1} \Big\|^2
+\gamma\Big|\xi_{\phi}^{n+1} \Big|^2_{\alpha/2}+\eta^2\Big\| \xi_{\phi}^{n+1} \Big\|^2\Big)+\mathcal{C}\Big( \tau^4+N^{-2r} \Big).\nonumber
\end{align}
Utilizing the Gronwall inequality II (Lemma \ref{ineq3}) for \eqref{error7}, we conclude
\begin{align*}
\Big\| \xi_u^{n+1} \Big\|\leq \mathcal{C}( \tau^2+N^{-r} ),\quad \Big\| \xi_{\phi}^{n+1} \Big\| \leq \mathcal{C}( \tau^2+N^{-r} ),
\quad \Big|\xi_{\phi}^{n+1} \Big|_{\alpha/2}\leq \mathcal{C}( \tau^2+N^{-r} ).
\end{align*}
By applying Lemma \ref{orgalpha}, we derive
\begin{align*}
\Big|{\phi}^{n+1}-{\Phi}_N^{n+1} \Big|_{\alpha/2}\leq \Big|\eta_{\phi}^{n+1} \Big|_{\alpha/2}+\Big|\xi_{\phi}^{n+1} \Big|_{\alpha/2}\leq \mathcal{C}( \tau^2+N^{\alpha/2-r} ),
\end{align*}
\begin{align*}
\Big\| u^{n+1}-U_N^{n+1} \Big\|\leq \Big\| \eta_u^{n+1} \Big\|+\Big\| \xi_u^{n+1} \Big\| \leq \mathcal{C}( \tau^2+N^{-r} )
\end{align*}
and
\begin{align*}
\Big\| \phi^{n+1}-\Phi_N^{n+1} \Big\|\leq \Big\| \eta_\phi^{n+1} \Big\|+\Big\| \xi_\phi^{n+1} \Big\| \leq \mathcal{C}( \tau^2+N^{-r} ).
\end{align*}
Therefore, combining the above estimates and Lemma \ref{boundinf}, we get
\begin{align*}
\Big\| \phi^{n+1}-\Phi_N^{n+1} \Big\|_\infty\leq \mathcal{C}\sqrt{ \Big\| \phi^{n+1}-\Phi_N^{n+1} \Big\|^2+\Big|{\phi}^{n+1}-{\Phi}_N^{n+1} \Big|_{\alpha/2}^2 }
 \leq \mathcal{C}( \tau^2+N^{\alpha/2-r} ).
\end{align*}
When $\alpha=\frac32$, the inference procedures are similar to the case $\alpha\neq\frac32$, here we omit it. We obtain the following consequences
\begin{align*}
\Big\| u^n-U^n_N \Big\| \leq \mathcal{C}(\tau^2+N^{\epsilon-r}),\quad
\Big\| \phi^n-\Phi^n_N \Big\|\leq \mathcal{C}(\tau^2+N^{\epsilon-r}),
\quad \Big\| \phi^n-\Phi^n_N \Big\|_\infty \leq \mathcal{C}(\tau^2+N^{\frac{\alpha}{2}-r}),\quad 0<\epsilon<\frac12.
\end{align*}
Thus, the proof is completed.
\end{proof}

\section{Linearly implicit and decoupled conservative scheme}\label{esavsgm}
The ESAV formulation of the FNKGS equation \eqref{equ1}--\eqref{equ2} introduces the exponential scalar auxiliary variables
\begin{align*}
p(t)=\exp\Big( 2\int_\Omega (\kappa_1|u|^2 + \kappa_2|u|^4)\phi dx\Big).
\end{align*}
Taking the derivative of $\ln(p)$, we have
\begin{align*}
\frac{d}{dt}\ln(p)&=2\int_\Omega \Big( 2\kappa_1\textrm{Re}( \bar{u}u_t )+4\kappa_2|u|^2\textrm{Re}( \bar{u}u_t ) \Big)\phi+\Big(  \kappa_1|u|^2 + \kappa_2|u|^4 \Big)\phi_tdx\\
&=4\textrm{Re}\Big( (\kappa_1 + 2\kappa_2|u|^2)u\phi,u_t \Big)+2\Big( \kappa_1|u|^2+\kappa_2|u|^4,\phi_t \Big)\\
&=4\textrm{Re}\Big( p{F}(u,\phi)u\phi,u_t \Big)+2\Big( p{G}(u,\phi),\phi_t \Big),
\end{align*}
where
\begin{align*}
{F}(u,\phi)=\frac{\kappa_1+2\kappa_2|u|^2}{\exp\Big( 2\int_\Omega (\kappa_1|u|^2 + \kappa_2|u|^4)\phi dx\Big)},\quad
{G}(u,\phi)=\frac{\kappa_1|u|^2+\kappa_2|u|^4}{\exp\Big( 2\int_\Omega (\kappa_1|u|^2 + \kappa_2|u|^4)\phi dx\Big)}.
\end{align*}
Then, the system \eqref{weak1-1}--\eqref{weak1-3} can be reformulated into an equivalent form
\begin{align}
&\textrm{i} (u_t,w)-\frac{\lambda}{2}\mathcal{B}(u,w)+ (p{F}(u,\phi)u\phi,w)=0,\\
&(\phi_t,w)=(\psi,w),\\
&(\psi_{t},w)+\gamma\mathcal{B}(\phi,w)+\eta^2 (\phi,w)- (p{G}(u,\phi),w)=0,\\
&\frac{d}{dt}\ln(p)=4\textrm{Re}\Big( p{F}(u,\phi)u\phi,u_t \Big)+2\Big( p{G}(u,\phi),\phi_t \Big),
\end{align}
with
\begin{align*}
u({ x},0)=u_0({ x}),\quad \phi({ x},0)=\phi_0({ x}),\quad \phi_t(x,0)=\phi_1(x)\quad p_0=\exp\Big( 2\int_\Omega (\kappa_1|u_0|^2 + \kappa_2|u_0|^4)\phi dx\Big).
\end{align*}
The energy conservation in the ESAV formulation is
\begin{align*}
\frac{d}{dt}\Big(\Big\|\psi\Big\|^2+\gamma\Big| \phi \Big|^2_{\alpha/2}+\eta^2 \Big\| \phi \Big\|^2
+\lambda\Big| u \Big|^2_{\alpha/2}-\ln(p)\Big)=0
.
\end{align*}
Combining the implicit midpoint method and spectral Galerkin method, the ESAV--SGM for \eqref{equ1}--\eqref{equ2} is constructed as: finding $U^n_N,\Phi^n_N,\Psi^n_N\in X_N^0(\Omega)$, for $w_N\in X_N^0(\Omega)$, such that
\begin{small}
\begin{align}
&\textrm{i} (\delta_tU_N^{n+1/2},w_N)-\frac{\lambda}{2}\mathcal{B}(U_N^{n+1/2},w_N)
+\widetilde{P}^{n+1/2}\Big(F(\widetilde{U}_N^{n+1/2},\widetilde{\Phi}_N^{n+1/2})\widetilde{U}_N^{n+1/2}\widetilde{\Phi}_N^{n+1/2},w_N\Big)=0,\label{ESAVGLS1-1}\\
&(\delta_t \Phi_N^{n+1/2},w_N)=(\Psi_N^{n+1/2},w_N),\label{ESAVGLS1-2}\\
&(\delta_t \Psi_N^{n+1/2},w_N)+\gamma\mathcal{B}(\Phi_N^{n+1/2},w_N)+\eta^2 (\Phi_N^{n+1/2},w_N)-\widetilde{P}^{n+1/2}\Big(G(\widetilde{U}_N^{n+1/2},\widetilde{\Phi}_N^{n+1/2}),w_N\Big)=0,\label{ESAVGLS1-3}\\
&\frac{\ln({P}^{n+1})-\ln({P}^{n})}{\tau}=4\widetilde{P}^{n+1/2} \textrm{Re}\Big( {F}(\widetilde{U}_N^{n+1/2},\widetilde{\Phi}_N^{n+1/2})\widetilde{U}_N^{n+1/2}\widetilde{\Phi}_N^{n+1/2},\delta_t{U}_N^{n+1/2} \Big)\nonumber\\
&\qquad\qquad\qquad\qquad\qquad+2\widetilde{P}^{n+1/2}\Big(G(\widetilde{U}_N^{n+1/2},\widetilde{\Phi}_N^{n+1/2}),\delta_t{\Phi}_N^{n+1/2} \Big),\label{ESAVGLS1-4}
\end{align}
\end{small}
with the startup scheme
\begin{small}
\begin{align}
&\textrm{i} (\delta_tU_N^{1/2},w_N)-\frac{\lambda}{2}\mathcal{B}(U_N^{1/2},w_N)
+{P}^{1/2}\Big(F({U}_N^{1/2},\Phi_N^{1/2}){U}_N^{1/2}{\Phi}_N^{1/2},w_N\Big)=0,\label{ESAVGLS0-1}\\
&(\delta_t \Phi_N^{1/2},w_N)=(\Psi_N^{1/2},w_N),\label{ESAVGLS0-2}\\
&(\delta_t \Psi_N^{1/2},w_N)+\gamma\mathcal{B}(\Phi_N^{1/2},w_N)+\eta^2 (\Phi_N^{1/2},w_N)-{P}^{1/2}\Big(G({U}_N^{1/2},\Phi_N^{1/2}),w_N\Big)=0,\label{ESAVGLS0-3}\\
&\frac{\ln({P}^{1})-\ln({P}^{0})}{\tau}=4{P}^{1/2} \textrm{Re}\Big( {F}({U}_N^{1/2},\Phi_N^{1/2}){U}_N^{1/2}{\Phi}_N^{1/2},\delta_t{U}_N^{1/2} \Big)
+2{P}^{1/2}\Big(G({U}_N^{1/2},\Phi_N^{1/2}),\delta_t{\Phi}_N^{1/2} \Big).\label{ESAVGLS0-4}
\end{align}
\end{small}

\begin{thm}
The numerical solution of the ESAV--SGM \eqref{ESAVGLS1-1}-\eqref{ESAVGLS1-4} enjoys an energy preservation in the sense that
\begin{align*}
\mathcal{E}^n=\cdots=\mathcal{E}^0,\qquad n=0,1,\cdots,N_t,
\end{align*}
where
\begin{align*}
\mathcal{E}^n=\Big\|\Psi^n_N\Big\|^2+\gamma\Big| \Phi^n_N \Big|^2_{\alpha/2}+\eta^2 \Big\| \Phi^n_N \Big\|^2+\lambda\Big| U^n_N \Big|^2_{\alpha/2}-\ln(P^n).
\end{align*}
\end{thm}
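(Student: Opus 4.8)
The plan is to imitate the energy argument of Theorem \ref{EP1-1}, but now exploiting the discrete auxiliary-variable equation \eqref{ESAVGLS1-4} to absorb the nonlinear contributions rather than manipulating the nonlinearities directly. I would argue by induction on $n$: prove $\mathcal{E}^{n+1}=\mathcal{E}^n$ for the main scheme \eqref{ESAVGLS1-1}--\eqref{ESAVGLS1-4} when $n\geq 1$, and separately $\mathcal{E}^1=\mathcal{E}^0$ for the startup scheme \eqref{ESAVGLS0-1}--\eqref{ESAVGLS0-4}; the two arguments are identical up to an index shift, so I would carry out the first one in detail and only indicate the adaptation for the startup step.

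First, take $w_N=\delta_t U_N^{n+1/2}$ in \eqref{ESAVGLS1-1} and pass to the real part. As in \eqref{proofenergy1-1}, $\textrm{Re}(\textrm{i}\,\delta_t U_N^{n+1/2},\delta_t U_N^{n+1/2})=0$ and $\textrm{Re}\{\mathcal{B}(U_N^{n+1/2},\delta_t U_N^{n+1/2})\}=(|U_N^{n+1}|_{\alpha/2}^2-|U_N^n|_{\alpha/2}^2)/(2\tau)$, which gives
\begin{align*}
\widetilde{P}^{n+1/2}\textrm{Re}\Big(F(\widetilde{U}_N^{n+1/2},\widetilde{\Phi}_N^{n+1/2})\widetilde{U}_N^{n+1/2}\widetilde{\Phi}_N^{n+1/2},\delta_t U_N^{n+1/2}\Big)=\frac{\lambda}{4\tau}\Big(\big|U_N^{n+1}\big|_{\alpha/2}^2-\big|U_N^n\big|_{\alpha/2}^2\Big).
\end{align*}
Next, since \eqref{ESAVGLS1-2} holds for every $w_N\in X_N^0(\Omega)$ and both $\delta_t\Phi_N^{n+1/2}$ and $\Psi_N^{n+1/2}$ belong to $X_N^0(\Omega)$, one obtains the pointwise identity $\delta_t\Phi_N^{n+1/2}=\Psi_N^{n+1/2}$. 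Then choosing $w_N=\delta_t\Phi_N^{n+1/2}$ in \eqref{ESAVGLS1-3} and using the telescoping identities \eqref{proofenergy2-1}--\eqref{proofenergy2-3} (with $\delta_t\Phi_N^{n+1/2}=\Psi_N^{n+1/2}$ inserted in the first term) yields
\begin{align*}
\frac{\big\|\Psi_N^{n+1}\big\|^2-\big\|\Psi_N^n\big\|^2}{2\tau}+\gamma\frac{\big|\Phi_N^{n+1}\big|_{\alpha/2}^2-\big|\Phi_N^n\big|_{\alpha/2}^2}{2\tau}+\eta^2\frac{\big\|\Phi_N^{n+1}\big\|^2-\big\|\Phi_N^n\big\|^2}{2\tau}=\widetilde{P}^{n+1/2}\Big(G(\widetilde{U}_N^{n+1/2},\widetilde{\Phi}_N^{n+1/2}),\delta_t\Phi_N^{n+1/2}\Big).
\end{align*}

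Finally, I would substitute these two identities into the discrete auxiliary-variable equation \eqref{ESAVGLS1-4}: the term $4\widetilde{P}^{n+1/2}\textrm{Re}(F\widetilde{U}_N^{n+1/2}\widetilde{\Phi}_N^{n+1/2},\delta_t U_N^{n+1/2})$ becomes $(\lambda/\tau)(|U_N^{n+1}|_{\alpha/2}^2-|U_N^n|_{\alpha/2}^2)$, while $2\widetilde{P}^{n+1/2}(G,\delta_t\Phi_N^{n+1/2})$ becomes $(1/\tau)(\|\Psi_N^{n+1}\|^2-\|\Psi_N^n\|^2)+(\gamma/\tau)(|\Phi_N^{n+1}|_{\alpha/2}^2-|\Phi_N^n|_{\alpha/2}^2)+(\eta^2/\tau)(\|\Phi_N^{n+1}\|^2-\|\Phi_N^n\|^2)$. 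Multiplying \eqref{ESAVGLS1-4} through by $\tau$ and regrouping, the equation collapses to $\mathcal{E}^{n+1}=\mathcal{E}^n$ with $\mathcal{E}^n$ as stated. An induction then gives $\mathcal{E}^{n}=\cdots=\mathcal{E}^1$, and applying the identical computation to the startup scheme with the $\tfrac12$-indexed test functions $\delta_t U_N^{1/2}$ and $\delta_t\Phi_N^{1/2}$ (so that \eqref{ESAVGLS0-4} plays the role of \eqref{ESAVGLS1-4}) gives $\mathcal{E}^1=\mathcal{E}^0$, completing the chain.

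There is no genuine analytic obstacle here: the scheme has been engineered precisely so that \eqref{ESAVGLS1-4} is the discrete differential consequence that cancels the nonlinear inner products exactly. The only points requiring care are (i) deriving the pointwise identity $\delta_t\Phi_N^{n+1/2}=\Psi_N^{n+1/2}$ from \eqref{ESAVGLS1-2} before substituting it into \eqref{ESAVGLS1-3}, and (ii) tracking the numerical factors (the $\tfrac14$ versus $\tfrac12$ coming from the bilinear form, and the factors $4$ and $2$ appearing in \eqref{ESAVGLS1-4}) so that the telescoping is exact; I expect this bookkeeping, together with writing out the parallel startup step, to be the most error-prone part of the write-up.
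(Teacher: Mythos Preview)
Your proposal is correct and follows essentially the same approach as the paper's proof, which is only a one-sentence sketch: test \eqref{ESAVGLS1-1} with $\delta_t U_N^{n+1/2}$ and take the real part, test \eqref{ESAVGLS1-3} with $\delta_t\Phi_N^{n+1/2}$, and combine. You supply the details the paper omits---in particular, the explicit use of \eqref{ESAVGLS1-4} to absorb the nonlinear inner products (the paper leaves this implicit), the pointwise identity $\delta_t\Phi_N^{n+1/2}=\Psi_N^{n+1/2}$ from \eqref{ESAVGLS1-2}, and the separate treatment of the startup step, which the paper does not mention at all.
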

\begin{proof}
Setting $w_N=\delta_tU_N^{n+1/2}$ in \eqref{ESAVGLS1-1} and taking the real part, then choosing $w_N=\delta_t \Phi_N^{n+1/2}$ in \eqref{ESAVGLS1-3}, summing up the above resulting formulae similar to Theorem \ref{EP1-1}, the energy preservation is readily and immediately available.
\end{proof}

\section{Decoupled iterative implementation}\label{imple}
In this section, we focus on the implementation of CN--SGM \eqref{CNGLS1-1}--\eqref{CNGLS1-3}, the implementation of ESAV--SGM can be similarly modified. The associated numerical solutions $U_N^n$, $\Phi_N^n$ and $\Psi_N^n$ are of the form
\begin{align*}
U_N^n=\sum\limits^{N-2}_{k=0}\widehat{U}^n_{k}\varsigma_k(x),\qquad \Phi_N^n=\sum\limits^{N-2}_{k=0}\widehat{\Phi}^n_{k}\varsigma_k(x),\qquad \Psi_N^n=\sum\limits^{N-2}_{k=0}\widehat{\Psi}^n_{k}\varsigma_k(x)
\end{align*}
belong to the $N$-dimensional polynomial space $X_N^0(\Omega)$ which is given by
\begin{align*}
X_{N}^0(\Omega)=\textrm{span}\{ \varsigma_k(x):~k=0,1,\cdots,N-2 \},
\end{align*}
where $\varsigma_k(x)$ is determined by the following recurrence relation
\begin{align*}
&\varsigma_k(x)=L_k(\widehat{x})-L_{k+2}(\widehat{x}),\quad \widehat{x}\in[-1,1],\quad x=\frac{(b-a)\widehat{x}+(a+b)}{2}\in[a,b],
\end{align*}
in which $L_k(\hat{x})$ represents Legendre orthogonal polynomial \cite{shenj} which satisfies the following three-term recurrence relation
\begin{equation*}
\left\{
\begin{array}{ll}
L_0(\hat{x})=1,\quad L_1(\hat{x})=\hat{x},\\
(k+1)L_{k+1}(\hat{x})=(2k+1)\hat{x}L_k(\hat{x})-kL_{k-1}(\hat{x}),\quad k\geq 1.
\end{array}
\right.
\end{equation*}

The associated mass and stiff matrices of CN--SGM are all symmetric and computed by
\begin{align*}
\mathbb{M}_{l,k}=\Big(\varsigma_k({x}),\varsigma_l({x})\Big),\quad
\mathbb{S}_{l,k}
=\frac{1}{2\cos(\frac{\alpha\pi}{2})}\Big[\Big({}_{a}^{RL}\!D^{\alpha/2}_{x}\varsigma_k({x}),{}_{x}^{RL}\!D^{\alpha/2}_{b}\varsigma_l({x})\Big)
+\Big({}_{x}^{RL}\!D^{\alpha/2}_{b}\varsigma_k({x}),{}_{a}^{RL}\!D^{\alpha/2}_{x}\varsigma_l({x})\Big)\Big].
\end{align*}
The linearized and decoupled fixed-point iterative algorithm is presented for the CN--SGM in the following form
\begin{equation*}
\left\{
\begin{array}{ll}
\Big( \textrm{i}\mathbb{M}-\frac{\lambda\tau}{4} \mathbb{S} \Big)\widehat{U}^{n+1,s+1}=\Big( \textrm{i}\mathbb{M}+\frac{\lambda\tau}{4} \mathbb{S} \Big)\widehat{U}^{n}-\frac{\tau}{4}\Big( \kappa_1\mathcal{N}_1^{n+1,s}+\kappa_2\mathcal{N}_2^{n+1,s} \Big),\quad s=0,1,\cdots,\\\\
\Big( \mathbb{M}+\frac{\tau^2}{4}\mathbb{S}+\frac{\eta^2\tau^2}{4}\mathbb{M} \Big)\widehat{\Phi}^{n+1,s+1}
=\Big( \mathbb{M}-\frac{\tau^2}{4}\mathbb{S}-\frac{\eta^2\tau^2}{4}\mathbb{M} \Big)\widehat{\Phi}^{n}+\tau\mathbb{M}\widehat{\Psi}^{n}
+\frac{\tau^2}{4}\Big(\kappa_1\mathcal{N}_3^{n+1,s}+\kappa_2\mathcal{N}_4^{n+1,s}\Big),
\end{array}
\right.
\end{equation*}
where
\begin{equation*}
\widehat{U}^{n+1,0}=
\left\{
\begin{array}{ll} \widehat{U}^{0},\quad n=0,\\
2\widehat{U}^{n}-\widehat{U}^{n-1},\quad n\geq 1,
\end{array}
\right.
\qquad
\widehat{\Phi}^{n+1,0}=
\left\{
\begin{array}{ll} \widehat{\Phi}^{0},\quad n=0,\\
2\widehat{\Phi}^{n}-\widehat{\Phi}^{n-1},\quad n\geq 1
\end{array}
\right.
\end{equation*}
and
\begin{equation*}
\left\{
\begin{array}{ll}
(\mathcal{N}_1^{n+1,s})_{l}=\Big(( U_N^{n}+U_N^{n+1,s} )( \Phi_N^{n}+\Phi_N^{n+1,s} ),\varsigma_l(x)\Big),~l=0,1,\cdots,N-2,\\\\
(\mathcal{N}_2^{n+1,s})_{l}=\Big((|U_N^{n}|^2+|U_N^{n+1,s}|^2)( U_N^{n}+U_N^{n+1,s} )( \Phi_N^{n}+\Phi_N^{n+1,s} ),\varsigma_l(x)\Big),~l=0,1,\cdots,N-2,\\\\
(\mathcal{N}_3^{n+1,s})_{l}=\Big(|U_N^{n}|^2+|U_N^{n+1,s}|^2,\varsigma_l(x)\Big),~l=0,1,\cdots,N-2,\\\\
(\mathcal{N}_4^{n+1,s})_{l}=\Big(|U_N^{n}|^4+|U_N^{n+1,s}|^4,\varsigma_l(x)\Big),~l=0,1,\cdots,N-2.
\end{array}
\right.
\end{equation*}
Then, $U_N^{n+1,s+1}$ and $\Phi_N^{n+1,s+1}$ numerically converge to the numerical solutions $U_N^{n+1}$ and $\Phi_N^{n+1}$, respectively, if there satisfies $\Big\| U_N^{n+1,s+1}-U_N^{n+1,s} \Big\|+\Big\| \Phi_N^{n+1,s+1}-\Phi_N^{n+1,s} \Big\|\leq tol$ for the given stopping criterion $tol$ \cite{anj}.

\section{Numerical experiments}\label{numerexper}
In this section, some numerical results are reported to verify the proposed spectral Galerkin method. All the simulations are implemented by using Matlab R2018a software on a computer with Intel Core i7 and 16 GB RAM. Without special instructions, we always take the stopping criterion $tol=10^{-14}$.

In numerical tests, we compute the $L^2-$ and $L^\infty-$norm errors at $t=N_t\tau$ by
\begin{align*}
\|u^{N_t}-U_N^{N_t}\|=\sqrt{\int_\Omega \Big|  u^{N_t}(x)-U_N^{N_t}(x)  \Big|^2dx}\approx \sqrt{\frac{b-a}{2}\sum\limits^{M}_{j=0} \Big| u^{N_t}(x_j)-U^{N_t}_N(x_j) \Big|^2\varpi_j}
\end{align*}
and
\begin{align*}
\Big\| u^{N_t}-U_N^{N_t} \Big\|_\infty:=\underset{0\leq j\leq M }{\max}\Big| u^{N_t}(x_j)-U^{N_t}_N(x_j) \Big|\quad \textrm{with}\quad x_j=\frac{(b-a)\widehat{x}_j+(a+b)}{2},
\end{align*}
where $\{\widehat{x}_j\}$ and $\{\varpi_j\}$ are points and weights of the Legendre--Gauss--Lobatto quadrature, respectively, and $M=\mu N$ $(\mu>1, \mu\in \mathbb{N}_+)$. In convergence test, for the case that the solution of equation is unknown, we intend to regard the more accurate numerical solutions as the reference solutions. To illustrate the energy preservation of the proposed numerical schemes, define the following relative mass and energy deviations
\begin{align*}
\textrm{RM}^n = \frac{|\mathcal{M}^n-\mathcal{M}^0|}{\mathcal{M}^0},\qquad \textrm{RE}^n = \frac{|\mathcal{E}^n-\mathcal{E}^0|}{\mathcal{E}^0}.
\end{align*}

\begin{ex}\label{ex1}
\emph{Consider the FNKGS system \eqref{equ1}--\eqref{equ2} with $\lambda=1$, $\kappa_1=1$, $\kappa_2=0$, $\gamma=1$ and $\eta=1$ in domain $(-20,20)\times(0,T]$. When $\alpha=2$,  the system has the exact solitary wave solutions as follows:
\begin{align}
&u(x,t,\nu)=\frac{3\sqrt{2}}{4\sqrt{1-\nu^2}} \sech^2\frac{x-\nu t-\chi_0}{2\sqrt{1-\nu^2}}
\exp\Big(  \textrm{i}( \nu x+\frac{1-\nu^2+\nu^4}{2(1-\nu^2)})t \Big),\label{exact1-1}\\
&\phi(x,t,\nu)=\frac{3}{4(1-\nu^2)}\sech^2\frac{x-\nu t-\chi_0}{2\sqrt{1-\nu^2}},\label{exact1-2}\\
&\phi_t(x,t,\nu)=\frac{3\nu}{4(1-\nu^2)^\frac32}\sech^2\frac{x-\nu t-\chi_0}{2\sqrt{1-\nu^2}}\tanh\frac{x-\nu t-\chi_0}{2\sqrt{1-\nu^2}}.\label{exact1-3}
\end{align}
Here, the initial datum $u_0(x)$, $\phi_0(x)$ and $\phi_1(x)$ are determined by the exact solutions \eqref{exact1-1}--\eqref{exact1-3}, where $\nu=0.8$ and $\chi_0=-10$.
}
\end{ex}
Firstly, we carry out testing the convergence accuracy of CN--SGM in time and space. For the fixed temporal step, we find from Fig. \ref{numer0} that the errors are exponentially decaying in $L^2$ and $L^\infty$ norm. For the fixed polynomial degree $N=150$, the temporal numerical results are listed in Table \ref{TAB-1}. The numerical results show that the proposed CN--SGM possesses the second-order accuracy in $L^2$ and $L^\infty$ norm, which are in great accordance with the theoretical results. Secondly, we show the effect of fractional order $\alpha$ on numerical profiles. It is obvious to observe from Fig. \ref{numer1} that
the fractional order $\alpha$ will dramatically affect the shapes of the solitons, we refer readers to \cite{liNA} for more details. Finally, for the purpose of numerical comparisons, we utilize the CN--SGM, the ESAV--SGM and the finite difference scheme (LF-FDM \cite{shiy}) to solve Example \ref{ex1} until $t=100$. Fig. \ref{numer2} and Fig. \ref{numer3} illustrates that the CN--SGM uniformly preserves the mass and energy to machine accuracy. Fig. \ref{numer3} verifies that the ESAV--SGM preserves the energy well, but the mass preservation can not be done from Fig. \ref{numer2}. Actually, the mass conservation is also an important structure for the convergence and stability analyses of a conservative algorithm. Fig. \ref{numer2} and Fig. \ref{numer3} shows that the LF-FDM is not up to the task of long-time mass and energy preservations. These numerical pictures illustrate that the CN--SGM enjoys a great superior than the ESAV--SGM and the LF-FDM for energy preservation. Subsequently, we plot the numerical errors and associated CPU time of the CN--SGM and the ESAV--SGM in Fig. \ref{cpu1-1} by setting the various temporal step $\tau$ and the same polynomial degree $N$. From the numerical results, we observe that the ESAV--SGM performs more effective than the CN--SGM, in the other word, the ESAV--SGM needs less time to obtain the same errors. It is worth noticing that the CN--SGM can obtain the more accurate numerical results than the ESAV--SGM for the same temporal step $\tau$ and polynomial degree $N$. Therefore, the CN--SGM and the ESAV--SGM each have their own advantages in long-time computations.
\begin{figure}[!htb]
\centering
\includegraphics[width=0.3\linewidth]{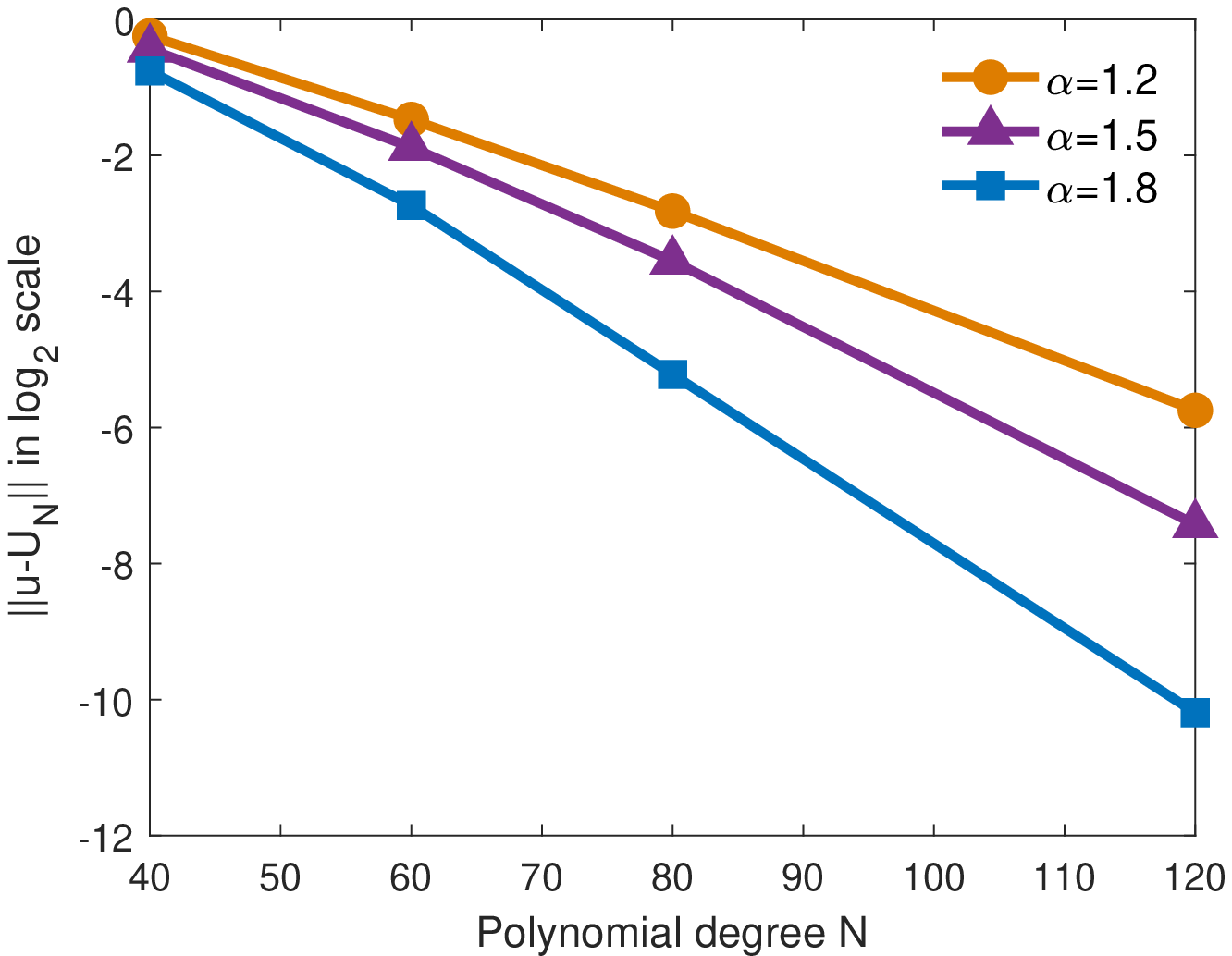}
\includegraphics[width=0.3\linewidth]{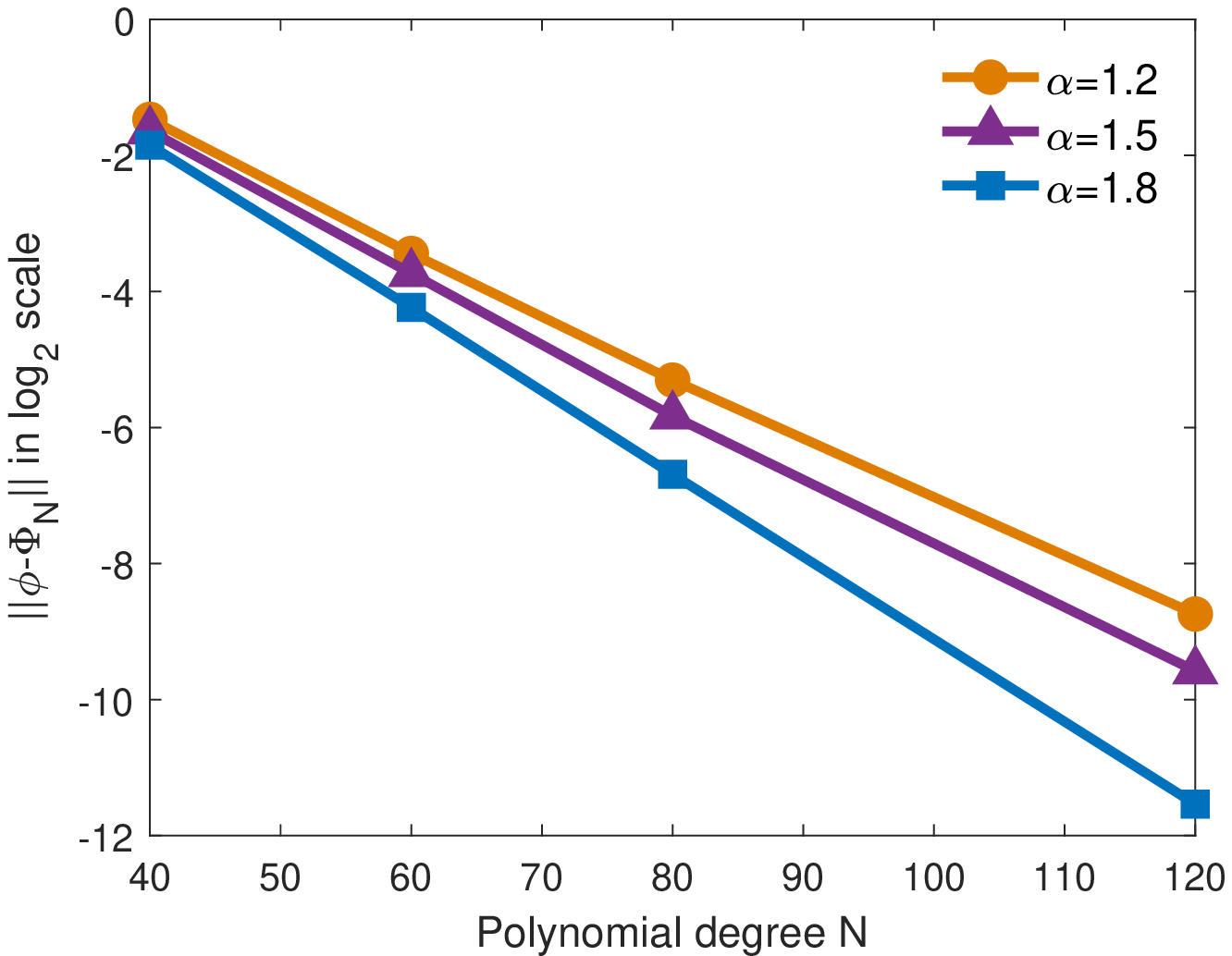}
\includegraphics[width=0.3\linewidth]{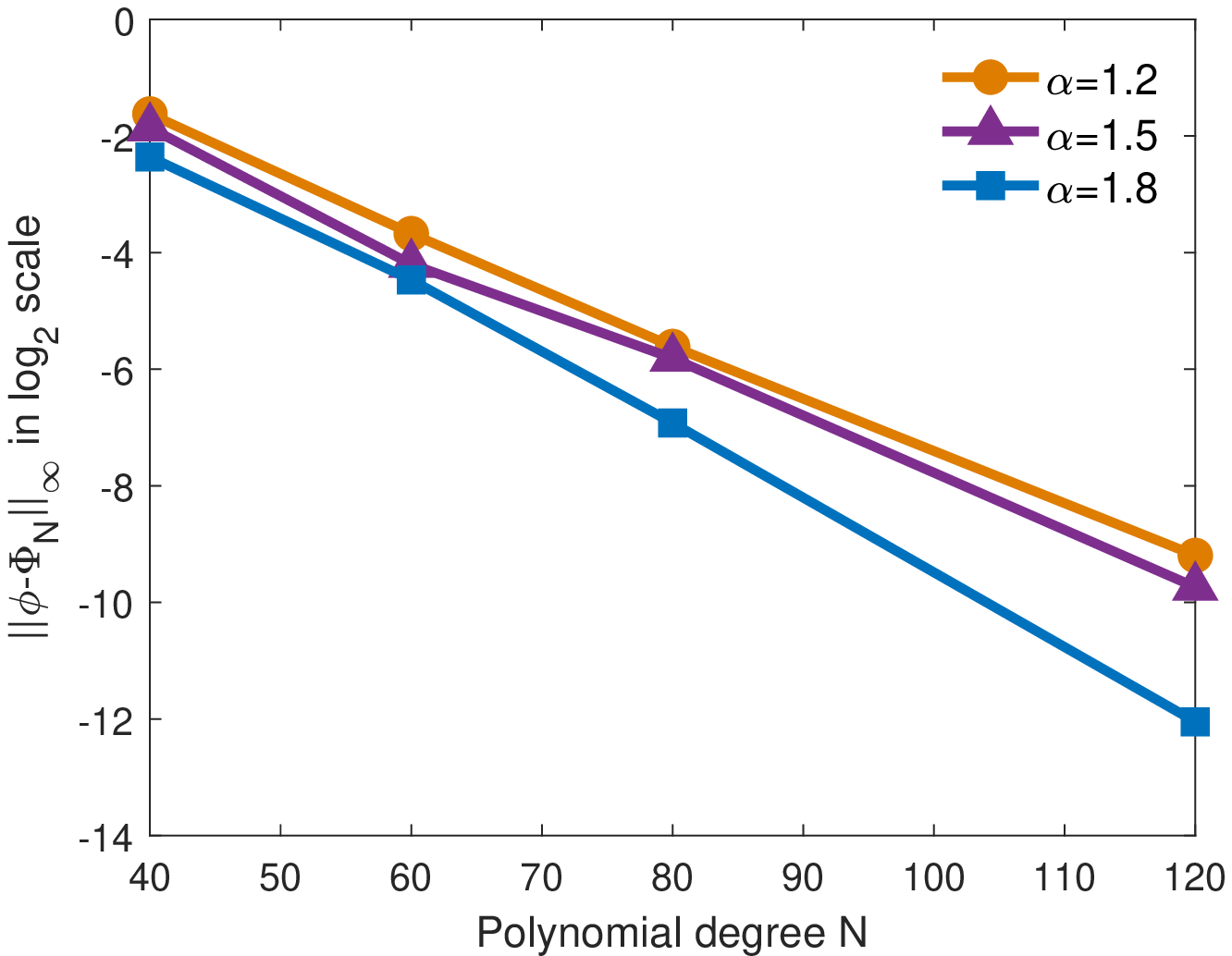}
\caption{\small{Spectral   accuracy tests of CN--SGM for Example \ref{ex1} with $\tau=1/1000$ at t=1.}}\label{numer0}
\end{figure}

\begin{table}[!htb]
\centering
\caption{\small{Temporal accuracy tests of CN--SGM for Example \ref{ex1} with $N=150$ at $t=1$.}}\label{TAB-1}
\resizebox{\textwidth}{!}{
\begin{tabular}{||cc|ccccccc||}
\hline 
$\alpha$ & $\tau$ & $\|u^{Nt}-U_N^{Nt}\|$ & \texttt{Conv.rate} & $\|\phi^{Nt}-\Phi_N^{Nt}\|$  & \texttt{Conv.rate} & $\|\phi^{Nt}-\Phi_N^{Nt}\|_\infty$ & \texttt{Conv.rate} & CPU time   \\
\hline
1.2& 0.1   & 4.5813e-03 &       -& 1.1683e-03 &       -& 12134e-03 &      - & 1.03s \\
   & 0.05  & 1.1492e-03 & 1.9951 & 2.9386e-04 & 1.9912 & 30651e-04 & 1.9851 & 1.54s \\
   & 0.025 & 2.8769e-04 & 1.9981 & 7.3579e-05 & 1.9978 & 76822e-05 & 1.9963 & 2.43s \\
   & 0.0125& 7.2149e-05 & 1.9955 & 1.8402e-05 & 1.9994 & 19218e-05 & 1.9991 & 4.00s \\
   \hline
1.5& 0.1   & 4.8761e-03 &       -& 1.0746e-03 &       -& 1.0233e-03 &      - & 1.05s\\
   & 0.05  & 1.2232e-03 & 1.9951 & 2.7114e-04 & 1.9866 & 2.6063e-04 & 1.9731 & 1.69s\\
   & 0.025 & 3.0612e-04 & 1.9985 & 6.7948e-04 & 1.9965 & 6.5450e-05 & 1.9935 & 2.69s\\
   & 0.0125& 7.6661e-05 & 1.9975 & 1.6996e-04 & 1.9992 & 1.6384e-05 & 1.9981 & 4.30s\\
   \hline
1.8& 0.1   & 5.4219e-03 &       -& 9.0221e-04 &       -& 8.5874e-04 &      - & 1.04s\\
   & 0.05  & 1.3661e-03 & 1.9888 & 2.2578e-04 & 1.9985 & 2.1491e-04 & 1.9989 & 1.69s\\
   & 0.025 & 3.4227e-04 & 1.9968 & 5.6461e-05 & 1.9996 & 5.3741e-05 & 1.9997 & 2.43s\\
   & 0.0125& 8.5705e-05 & 1.9977 & 1.4124e-05 & 1.9991 & 1.3429e-05 & 2.0012 & 3.99s\\
\hline
\end{tabular}
}
\end{table}
\begin{figure}[!htb]
\centering
\includegraphics[width=0.32\linewidth]{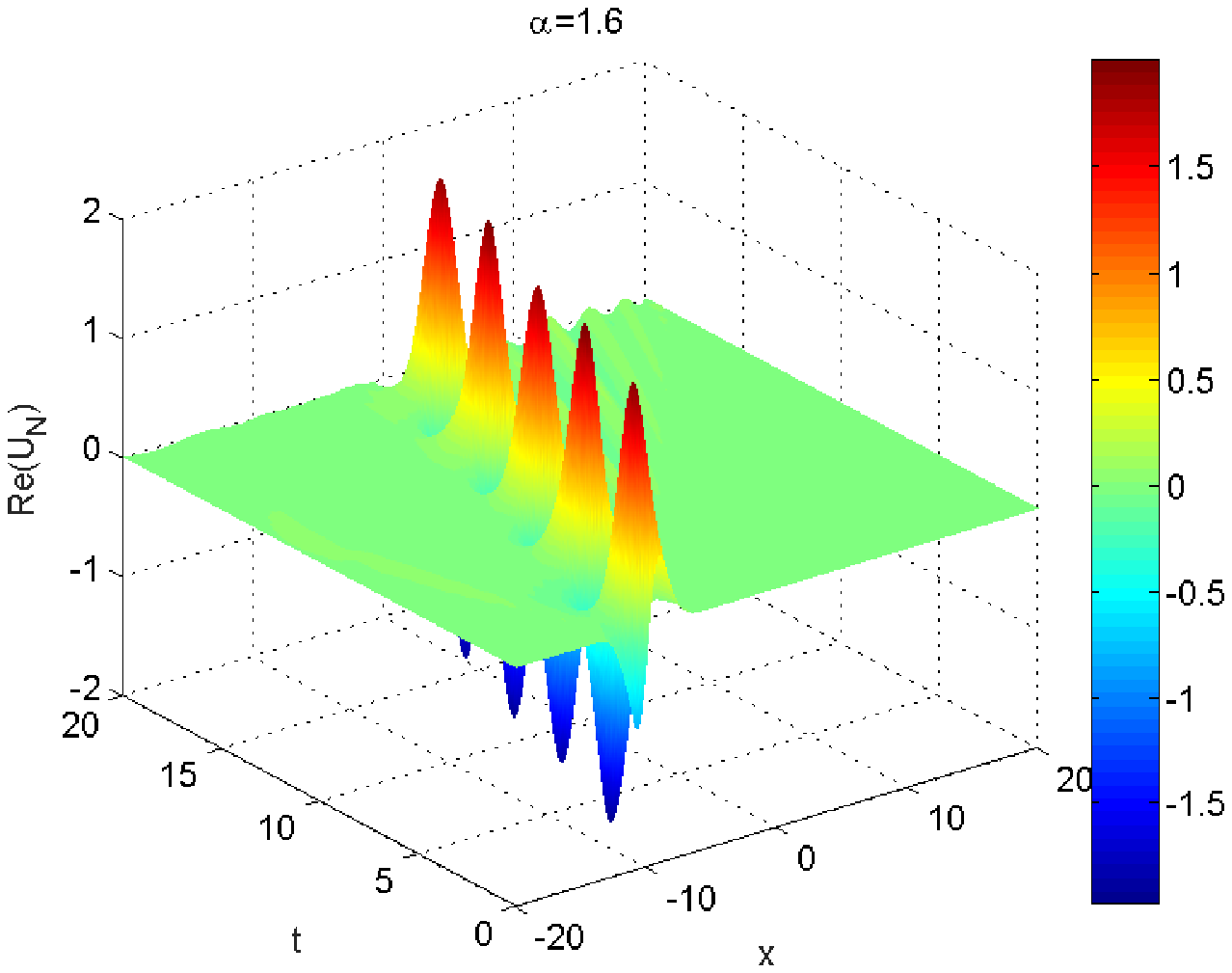}
\includegraphics[width=0.32\linewidth]{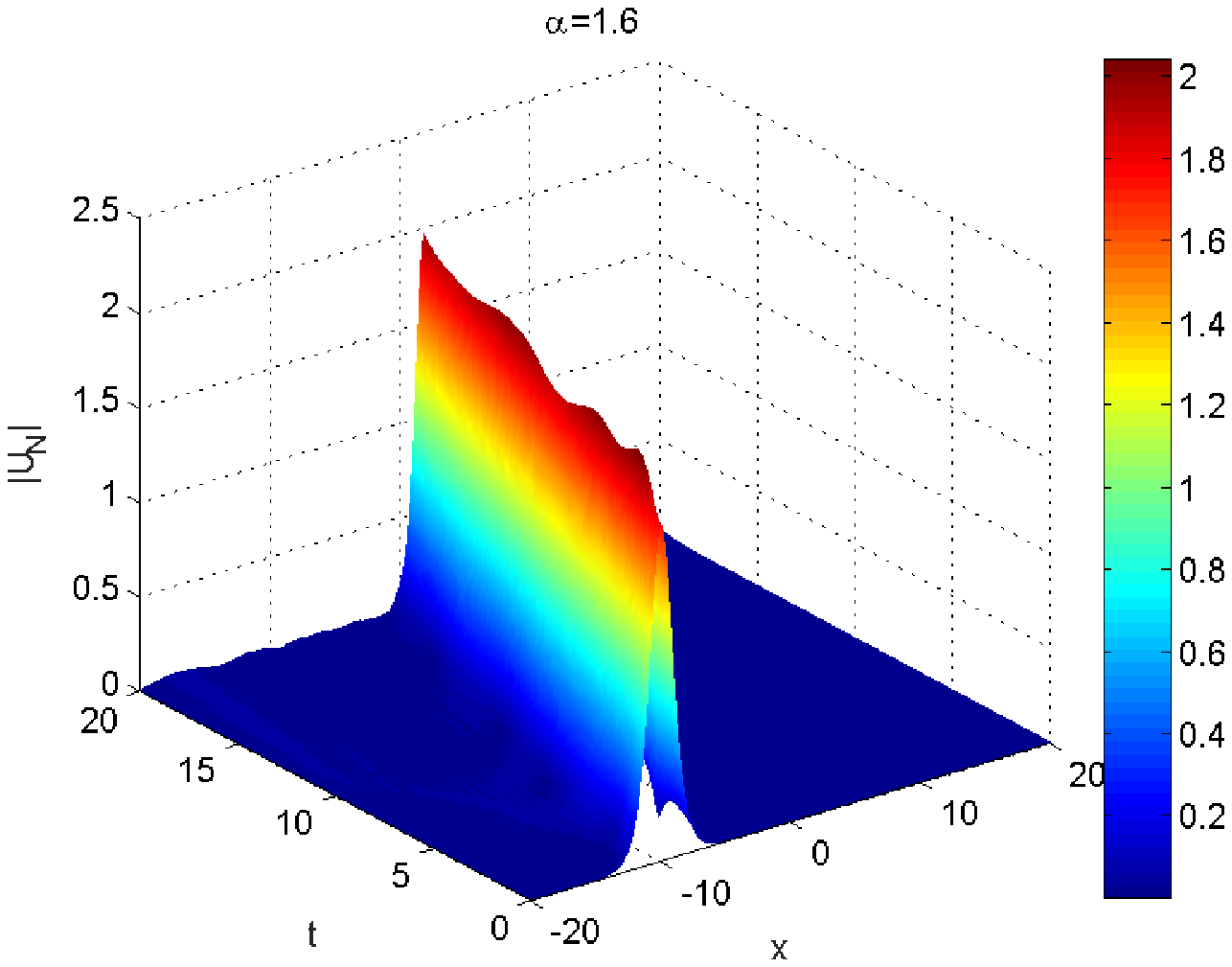}
\includegraphics[width=0.32\linewidth]{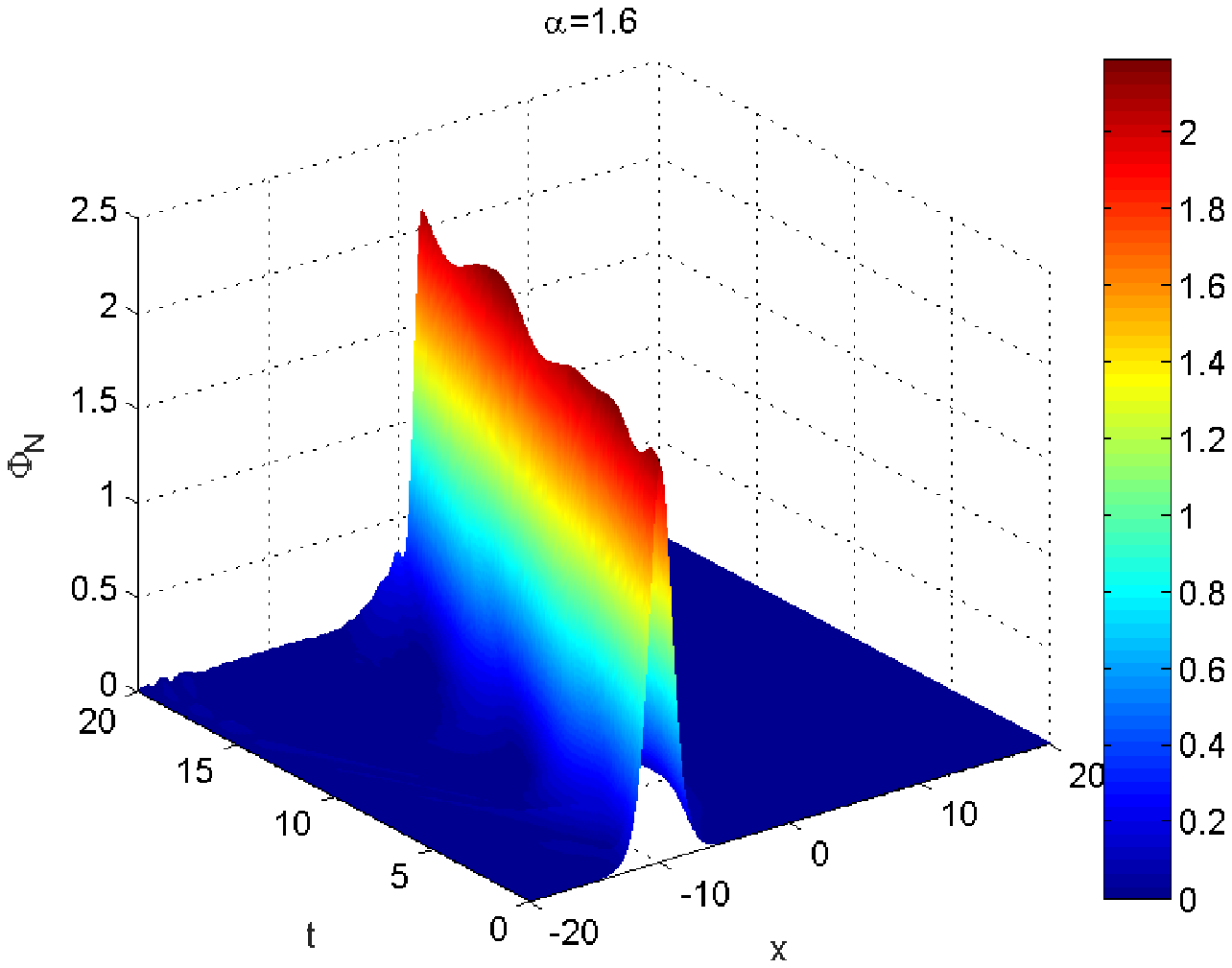}
\includegraphics[width=0.32\linewidth]{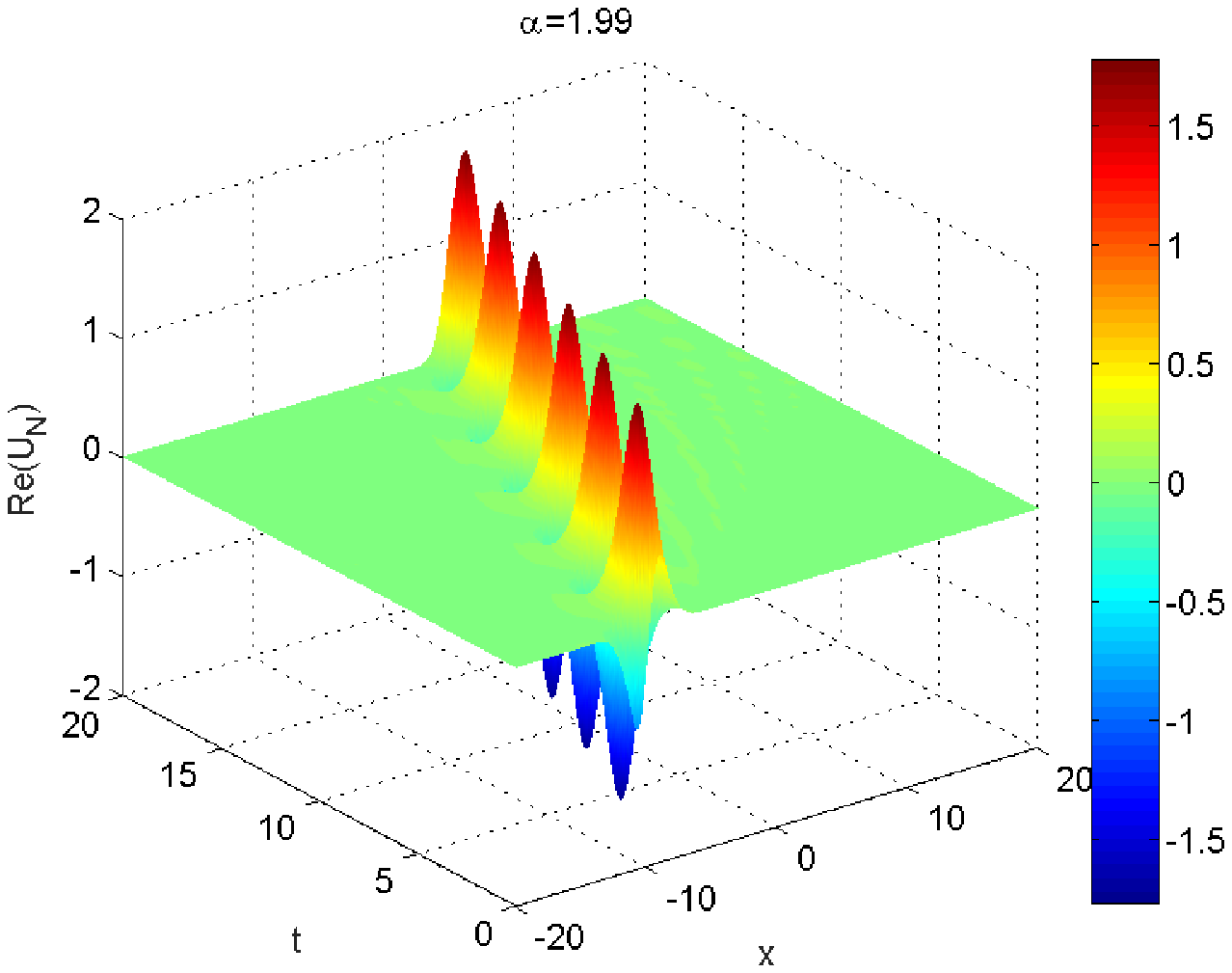}
\includegraphics[width=0.32\linewidth]{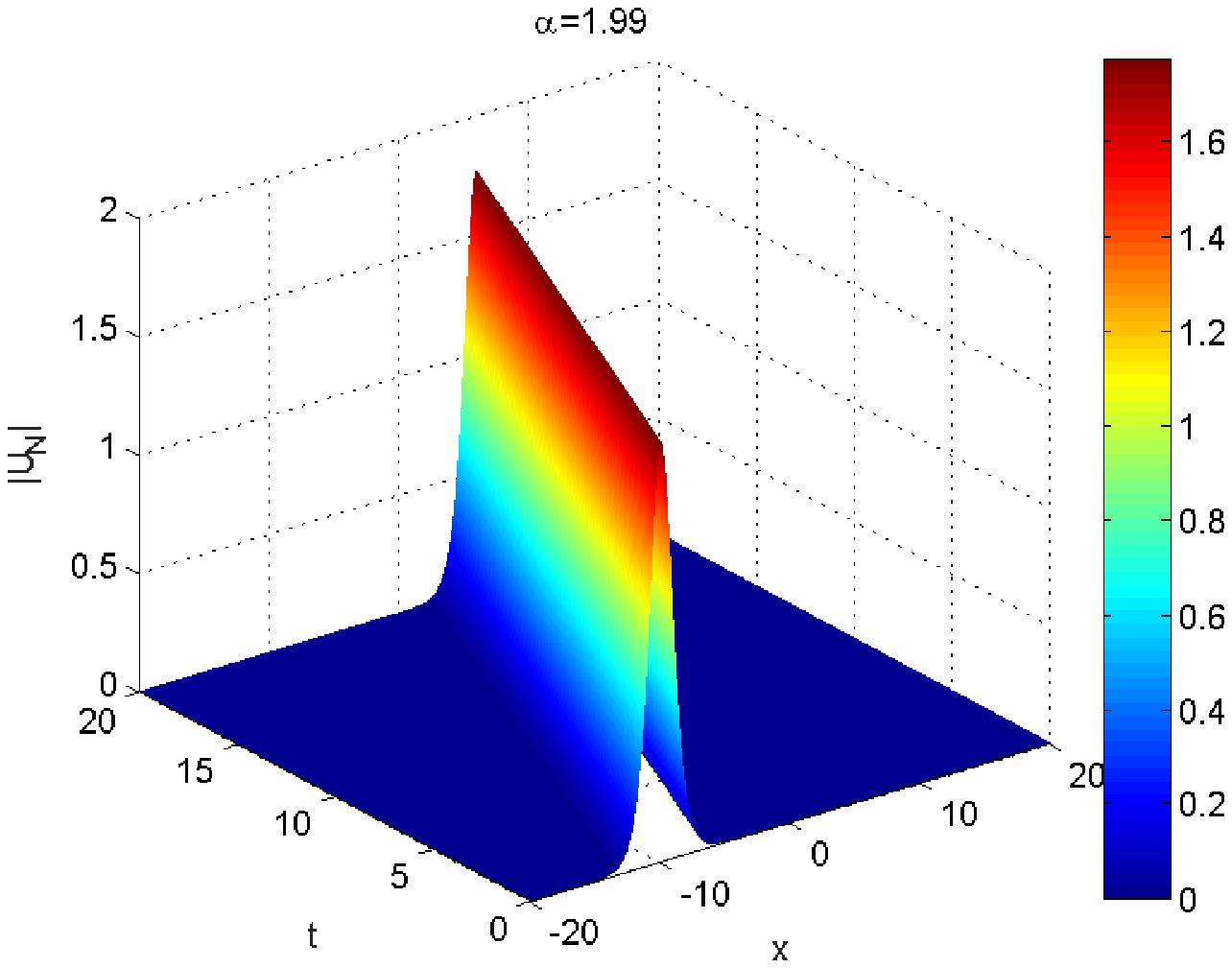}
\includegraphics[width=0.32\linewidth]{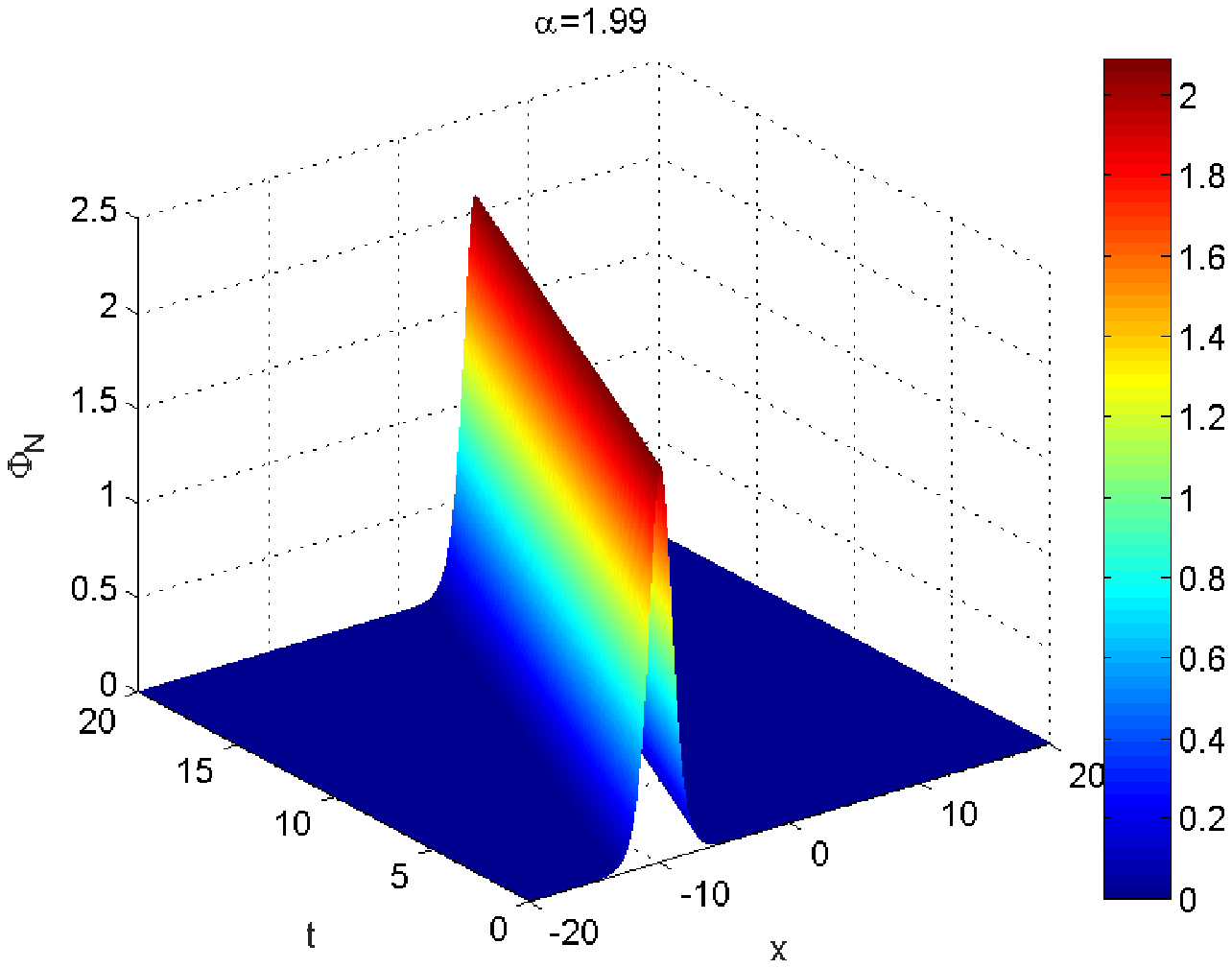}
\caption{\small{Time evolutions of the relative energy and mass of CN--SGM for Example \ref{ex1} with $\tau=1/10$ and $N=100$.}}\label{numer1}
\end{figure}

\begin{figure}[!htb]
\centering
\includegraphics[width=0.32\linewidth]{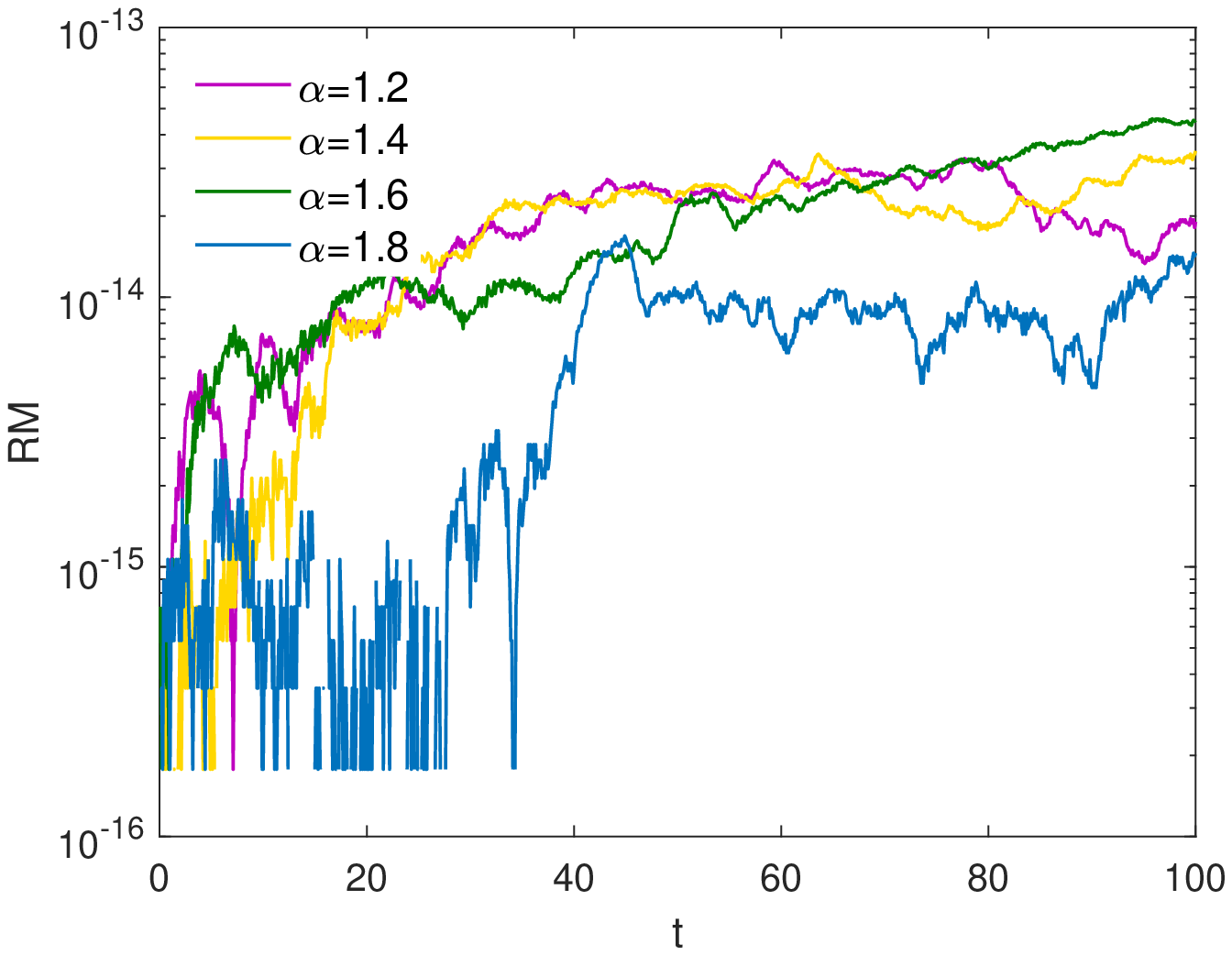}
\includegraphics[width=0.32\linewidth]{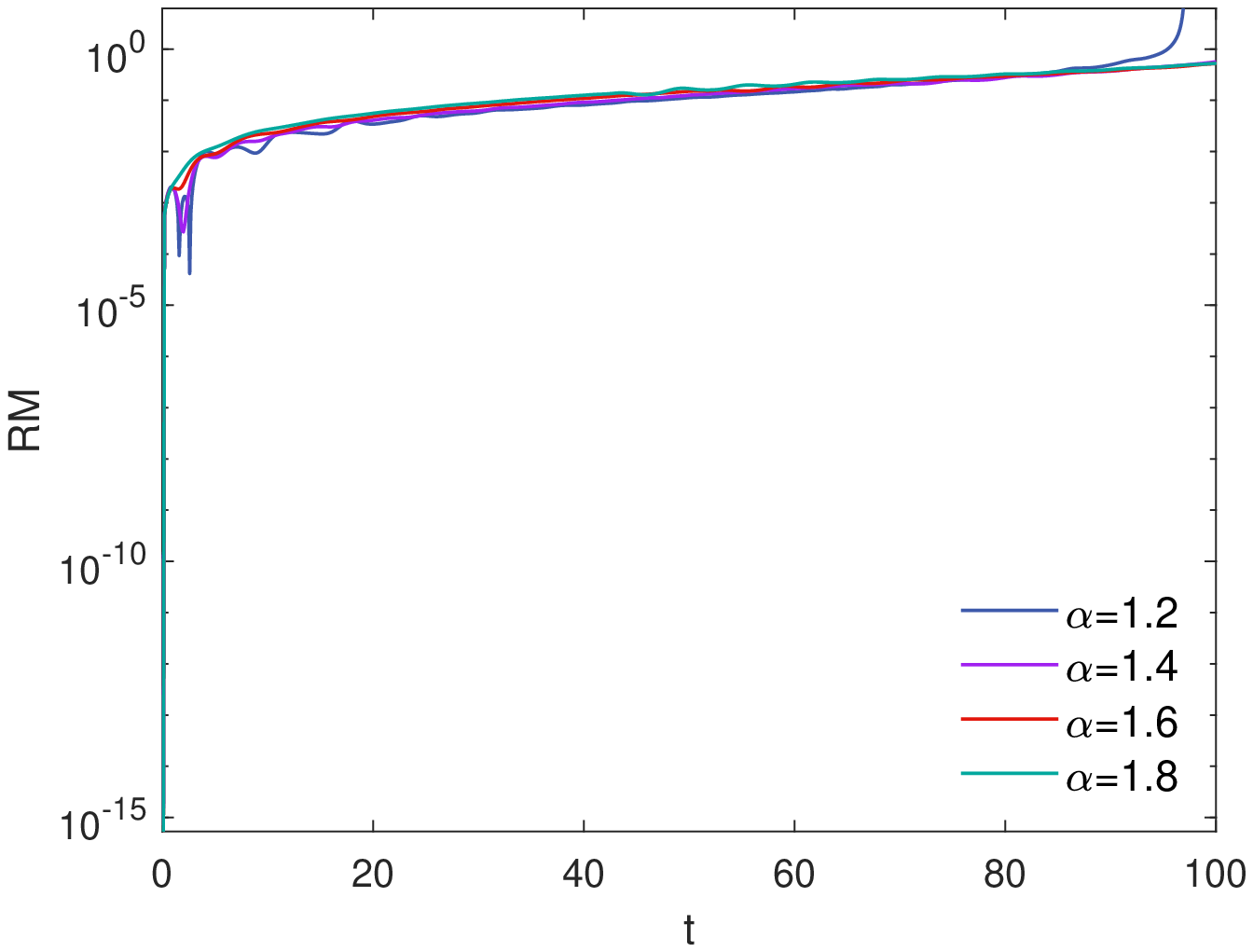}
\includegraphics[width=0.32\linewidth]{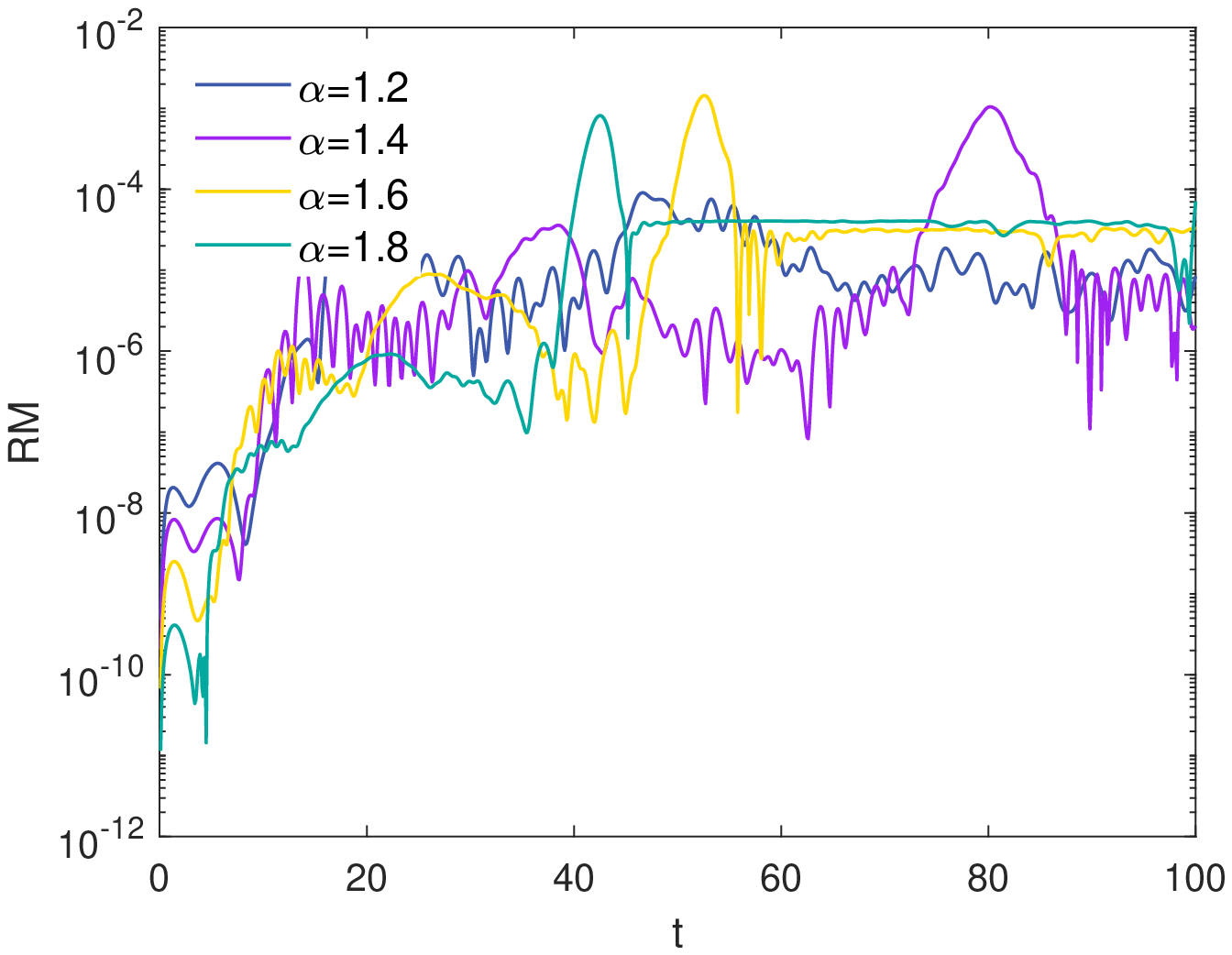}
\caption{\small{Time evolutions of the relative mass deviations of CN--SGM (Left), ESAV--SGM (Middle) and LF--FDM (Right) for Example \ref{ex1} with $\tau=1/10$ and $N=100$.}}\label{numer2}
\end{figure}

\begin{figure}[!htb]
\centering
\includegraphics[width=0.32\linewidth]{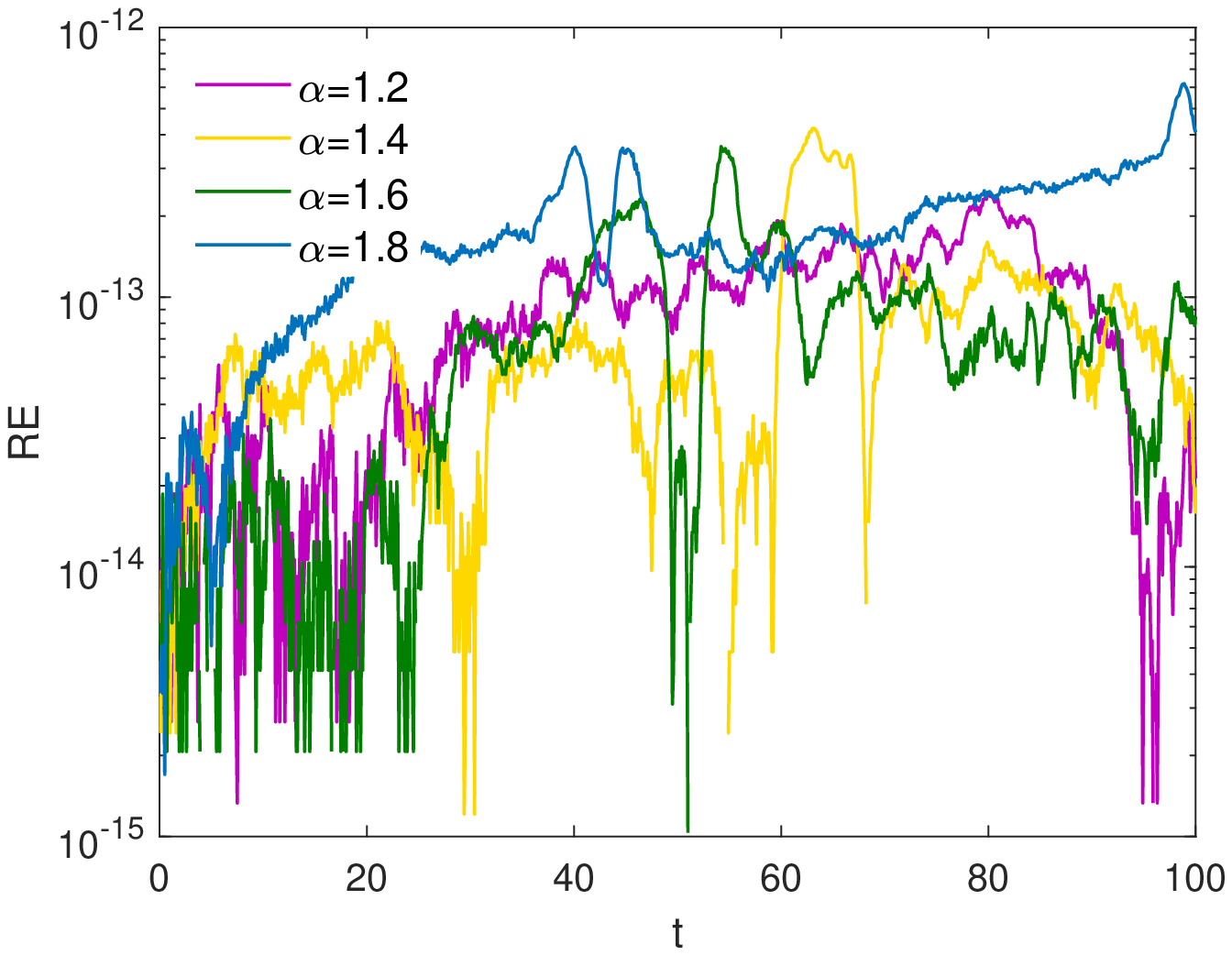}
\includegraphics[width=0.32\linewidth]{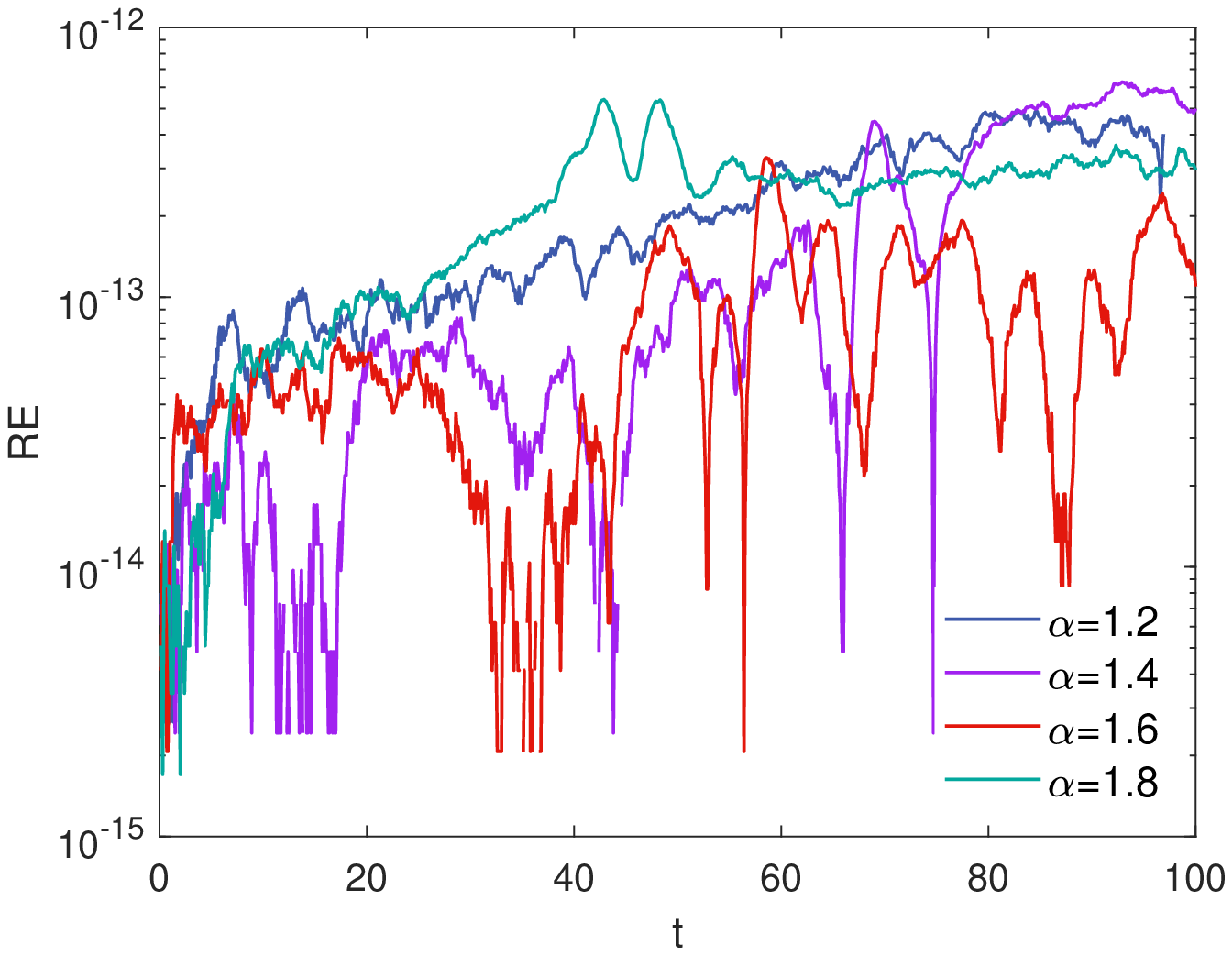}
\includegraphics[width=0.32\linewidth]{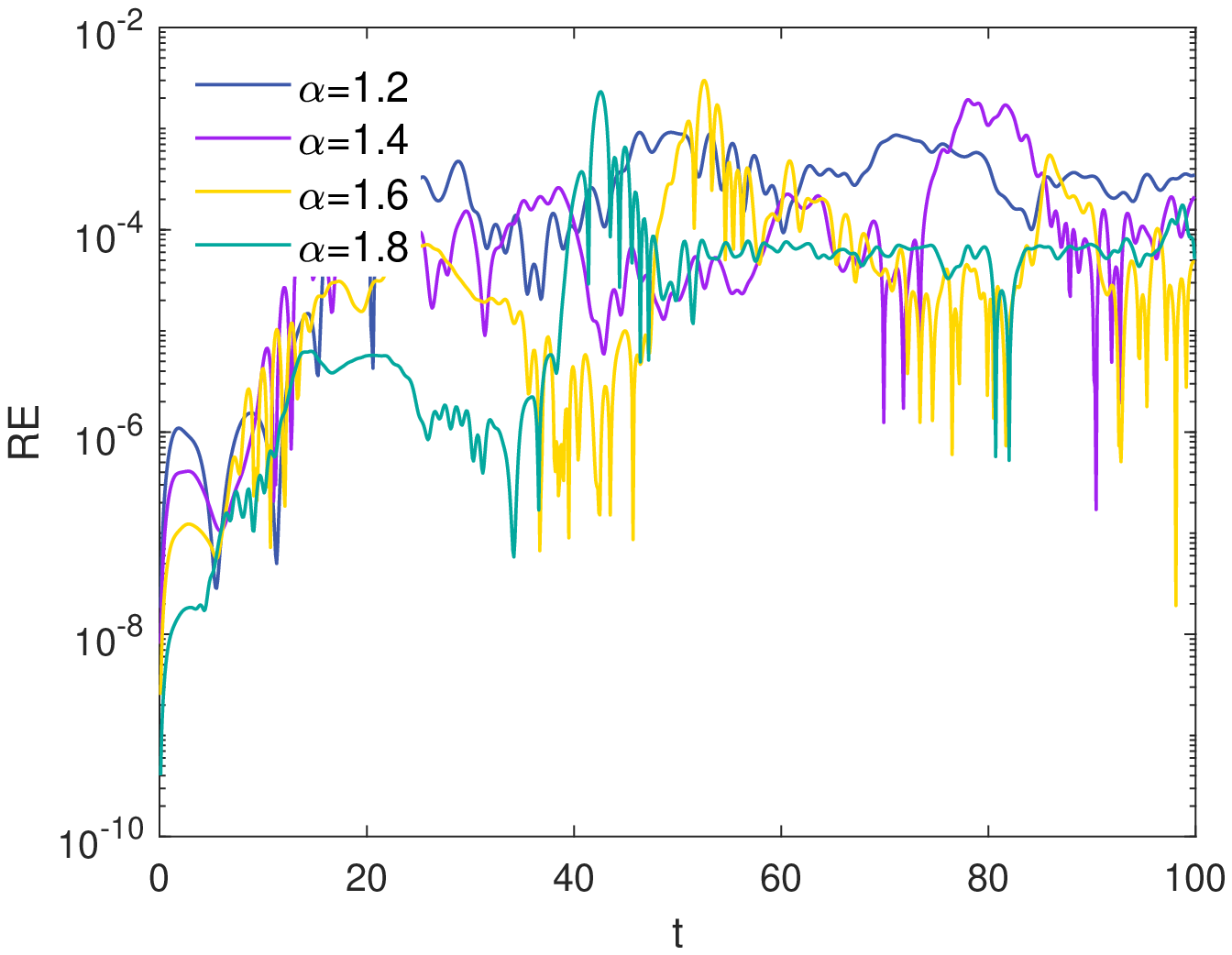}
\caption{\small{Time evolutions of the relative energy deviations of CN--SGM (Left), ESAV--SGM (Middle) and LF--FDM (Right) for Example \ref{ex1} with $\tau=1/10$ and $N=100$.}}\label{numer3}
\end{figure}

\begin{figure}[!htb]
\centering
\includegraphics[width=0.32\linewidth]{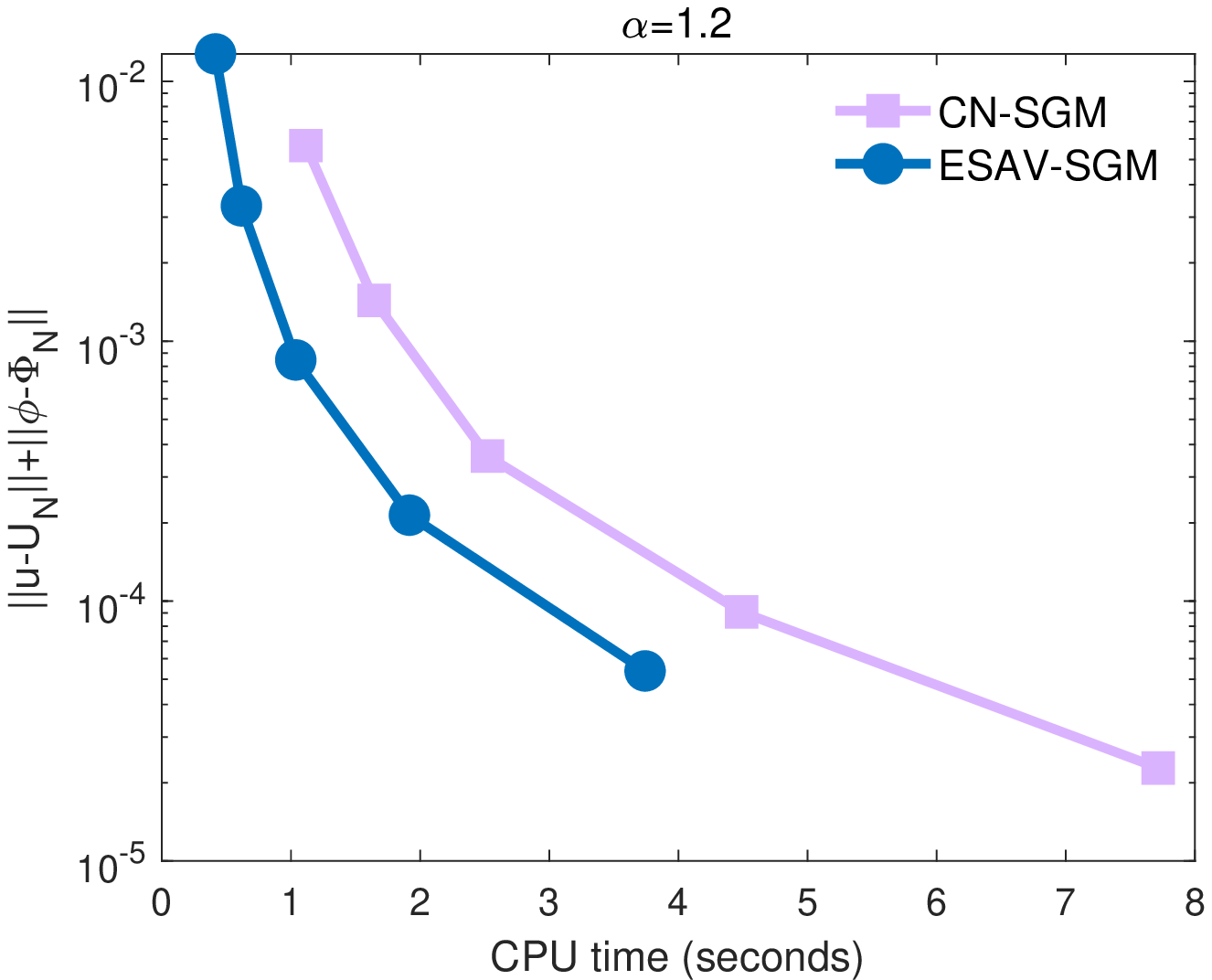}
\includegraphics[width=0.32\linewidth]{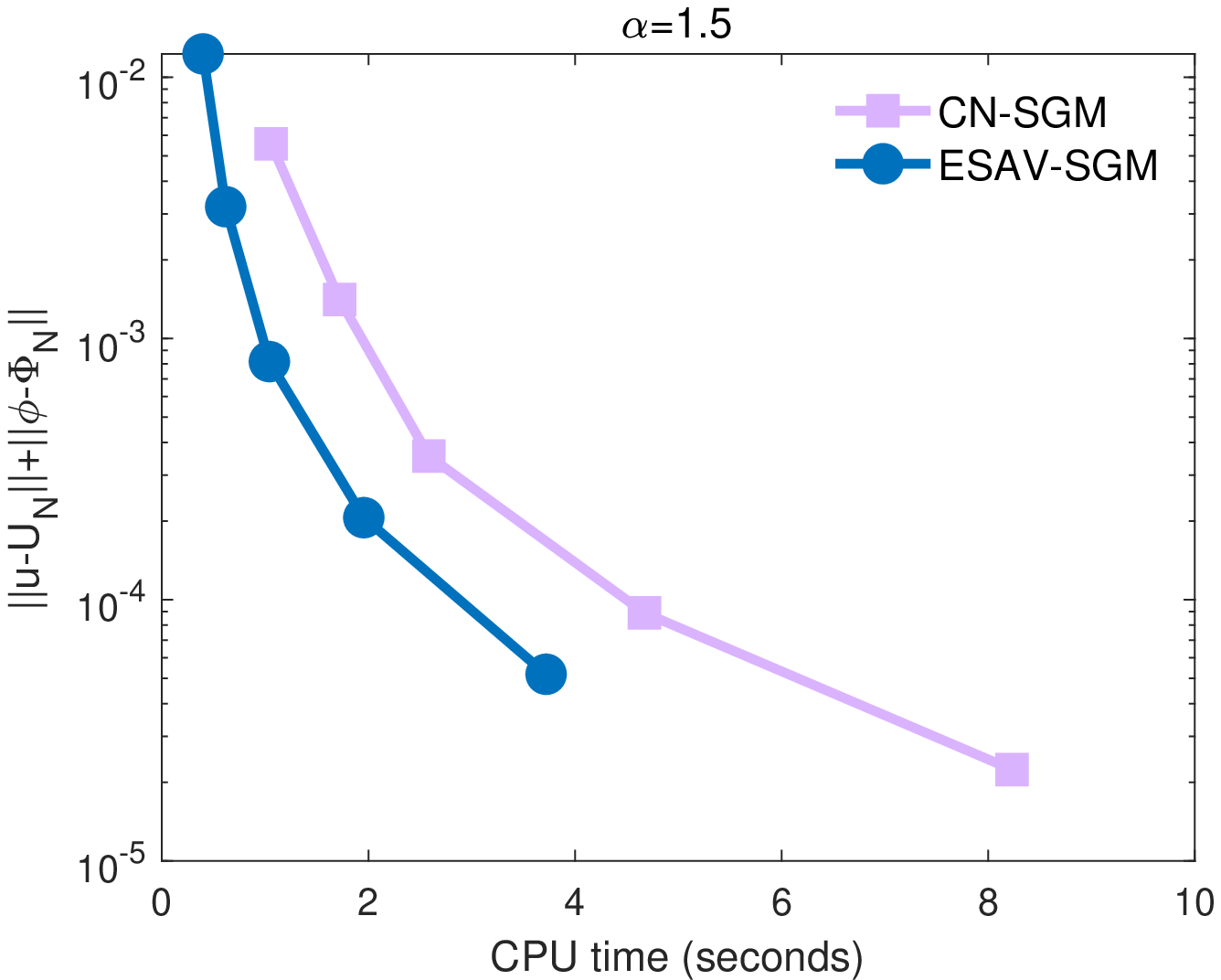}
\includegraphics[width=0.32\linewidth]{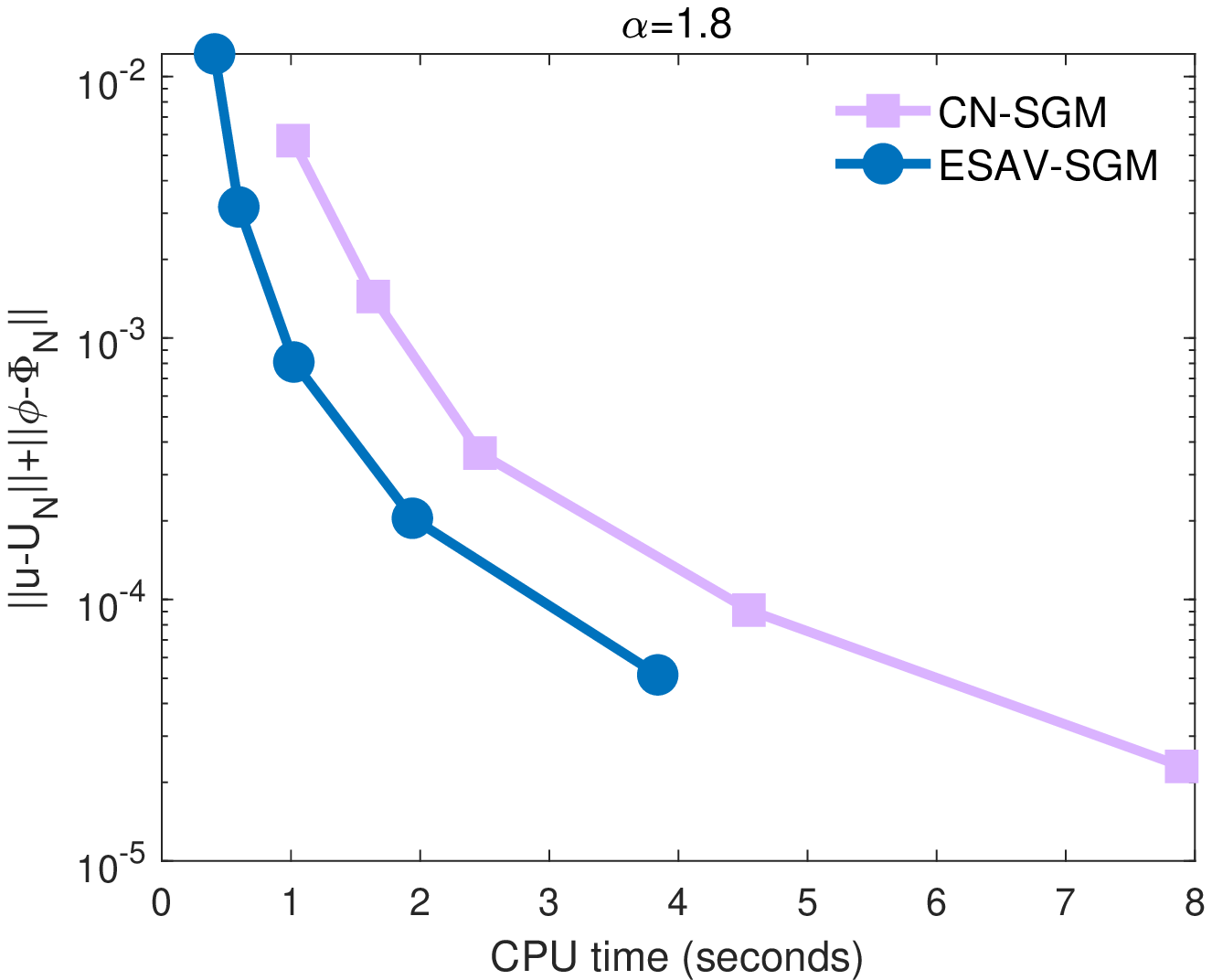}
\caption{\small{Errors $\|u^{Nt}-U_N^{Nt}\|+\|\phi^{Nt}-\Phi_N^{Nt}\|$ and the corresponding CPU time costs for Example \ref{ex1} with $N=150$ and $\tau=1/20, 1/40, 1/80, 1/160, 1/320$.}}\label{cpu1-1}
\end{figure}

\begin{ex}\label{ex2}
\emph{Consider the FNKGS system \eqref{equ1}--\eqref{equ2} with $\lambda=1$, $\kappa_1=1$, $\kappa_2\geq 0$, $\gamma=1$ and $\eta=1$ in domain $(-20,20)\times(0,T]$. We choose the following initial datum with two solitons:
\begin{align*}
&u_0(x)=u(x-p_1,0,\nu_1)+u(x-p_2,0,\nu_2),\\
&\phi_0(x)=\phi(x-p_1,0,\nu_1)+\phi(x-p_2,0,\nu_2),\\
&\phi_1(x)=\phi_t(x-p_1,0,\nu_1)+\phi_t(x-p_2,0,\nu_2),
\end{align*}
where $p_1=-p_2=-10$, $\nu_1=-\nu_2=0.8$ and $\chi_0=0$.}
\end{ex}

For the FNKGS system with Yukawa interaction, we intend to perform the effect of fractional orders on the numerical accuracy and discrete conservation laws. We apply the CN--SGM to solve Example \ref{ex2}, the convergence rate and the associated CPU time for various $\kappa_2$ are shown in Table \ref{TAB-2}--Table \ref{TAB-4}, the numerical results verify the second-order accuracy in time, which are in good agreement with theoretical results. For the fixed temporal step $\tau=1/50$ and polynomial degree $N=200$, the numerical solutions $|U_N^{n}(x)|$ and $\Phi_N^{n}(x)$ are plotted in Fig. \ref{numer4}, we observe that the collisions will occur earlier as $\alpha$ increases, and the effect of fractional order $\alpha$ on two solitons collide is distinct. By setting $\tau=1/50$, $N=150$ and $\kappa_2=0.01$, the time evolution of relative mass and relative energy errors are presented in Fig. \ref{numer5}, the numerical results exactly confirm that the proposed spectral Galerkin scheme preserves the mass and energy well in long-time computations.  In the end, one can also notice from Fig. \ref{numer6} that the discrete energy occurs negative if we choose a large $\kappa_2$, which demonstrates that the analytical solutions of the FNKGS system \eqref{equ1}--\eqref{equ2} will be blow up in finite time \cite{Boulenger2016JFA}. Also, the blow-up time will get smaller when the ESAV--SGM is adopted, which implies that the CN--SGM is more suitable than the ESAV--SGM to capture blow up.

\begin{table}[!htb]
\centering
\caption{\small{Temporal accuracy of CN--SGM for Example \ref{ex2} with $\kappa_2=0$ and $N=150$ at $t=1$.}}\label{TAB-2}
\resizebox{\textwidth}{!}{
\begin{tabular}{||cc|ccccccc||}
\hline 
$\alpha$ & $\tau$ & $\|u^{Nt}-U_N^{Nt}\|$ & \texttt{Conv.rate} & $\|\phi^{Nt}-\Phi_N^{Nt}\|$  & \texttt{Conv.rate} & $\|\phi^{Nt}-\Phi_N^{Nt}\|_\infty$ & \texttt{Conv.rate} & CPU time   \\
\hline
1.2& 0.1   & 6.4901e-03 &       -& 1.6517e-03 &       -& 1.2104e-03 &      - & 1.15s \\
   & 0.05  & 1.6281e-03 & 1.9950 & 4.1544e-04 & 1.9912 & 3.0573e-04 & 1.9851 & 1.73s \\
   & 0.025 & 4.0760e-04 & 1.9980 & 1.0402e-04 & 1.9978 & 7.6629e-05 & 1.9963 & 2.44s \\
   & 0.0125& 1.0228e-04 & 1.9946 & 2.6016e-05 & 1.9994 & 1.9169e-05 & 1.9991 & 4.15s \\
   \hline
1.5& 0.1   & 6.8998e-03 &       -& 1.5194e-03 &       -& 1.0228e-03 &      - & 1.03s \\
   & 0.05  & 1.7308e-03 & 1.9951 & 3.8338e-04 & 1.9866 & 2.6053e-04 & 1.9729 & 1.60s \\
   & 0.025 & 4.3318e-04 & 1.9984 & 9.6079e-05 & 1.9965 & 6.5416e-05 & 1.9937 & 2.42s \\
   & 0.0125& 1.0851e-04 & 1.9972 & 2.4032e-05 & 1.9993 & 1.6379e-05 & 1.9978 & 4.10s \\
   \hline
1.8& 0.1   & 7.6685e-03 &       -& 1.2759e-03 &       -& 8.5880e-04 &      - & 1.01s \\
   & 0.05  & 1.9321e-03 & 1.9888 & 3.1930e-04 & 1.9985 & 2.1494e-04 & 1.9984 & 1.71s \\
   & 0.025 & 4.8409e-04 & 1.9968 & 7.9848e-05 & 1.9996 & 5.3753e-05 & 1.9995 & 2.48s \\
   & 0.0125& 1.2122e-04 & 1.9976 & 1.9986e-05 & 1.9983 & 1.3424e-05 & 2.0015 & 4.18s \\
\hline
\end{tabular}
}
\end{table}

\begin{table}[!htb]
\centering
\caption{\small{Temporal accuracy of CN--SGM for Example \ref{ex2} with $\kappa_2=0.01$ and $N=150$ at $t=1$.}}\label{TAB-3}
\resizebox{\textwidth}{!}{
\begin{tabular}{||cc|ccccccc||}
\hline 
$\alpha$ & $\tau$ & $\|u^{Nt}-U_N^{Nt}\|$ & \texttt{Conv.rate} & $\|\phi^{Nt}-\Phi_N^{Nt}\|$  & \texttt{Conv.rate} & $\|\phi^{Nt}-\Phi_N^{Nt}\|_\infty$ & \texttt{Conv.rate} & CPU time   \\
\hline
1.2& 0.1   & 7.6888e-03 &       -& 1.6980e-03 &       -& 1.2975e-03 &      - & 1.09s \\
   & 0.05  & 1.9294e-03 & 1.9946 & 4.2726e-04 & 1.9906 & 3.2780e-04 & 1.9848 & 1.62s \\
   & 0.025 & 4.8297e-04 & 1.9981 & 1.0699e-04 & 1.9976 & 8.2164e-05 & 1.9962 & 2.52s \\
   & 0.0125& 1.2105e-04 & 1.9963 & 2.6759e-05 & 1.9994 & 2.0555e-05 & 1.9990 & 4.09s \\
   \hline
1.5& 0.1   & 8.1416e-04 &       -& 1.6199e-03 &       -& 1.2303e-03 &      - & 1.05s \\
   & 0.05  & 2.0423e-04 & 1.9951 & 4.0976e-04 & 1.9830 & 3.1367e-04 & 1.9716 & 1.64s \\
   & 0.025 & 5.1110e-04 & 1.9985 & 1.0275e-04 & 1.9956 & 7.8783e-05 & 1.9933 & 2.62s \\
   & 0.0125& 1.2794e-04 & 1.9981 & 2.5706e-05 & 1.9990 & 1.9725e-05 & 1.9978 & 4.18s \\
   \hline
1.8& 0.1   & 9.2848e-03 &       -& 1.0966e-03 &       -& 6.9719e-04 &      - & 1.17s \\
   & 0.05  & 2.3492e-03 & 1.9827 & 2.7501e-04 & 1.9955 & 1.7479e-04 & 1.9959 & 1.55s \\
   & 0.025 & 5.8940e-04 & 1.9948 & 6.8813e-05 & 1.9987 & 4.3738e-05 & 1.9987 & 2.56s \\
   & 0.0125& 1.4756e-04 & 1.9979 & 1.7233e-05 & 1.9975 & 1.0917e-05 & 2.0024 & 4.04s \\
\hline
\end{tabular}
}
\end{table}

\begin{table}[!htb]
\centering
\caption{\small{Temporal accuracy of CN--SGM for Example \ref{ex2} with $\kappa_2=0.1$ and $N=150$ at $t=1$.}}\label{TAB-4}
\resizebox{\textwidth}{!}{
\begin{tabular}{||cc|ccccccc||}
\hline 
$\alpha$ & $\tau$ & $\|u^{Nt}-U_N^{Nt}\|$ & \texttt{Conv.rate} & $\|\phi^{Nt}-\Phi_N^{Nt}\|$  & \texttt{Conv.rate} & $\|\phi^{Nt}-\Phi_N^{Nt}\|_\infty$ & \texttt{Conv.rate} & CPU time   \\
\hline
1.2& 0.1   & 5.0647e-02 &       -& 3.9455e-03 &       -& 3.7289e-03 &      - & 1.78s \\
   & 0.05  & 1.2893e-02 & 1.9739 & 1.0040e-03 & 1.9745 & 9.5011e-04 & 1.9726 & 2.19s \\
   & 0.025 & 3.2382e-03 & 1.9933 & 2.5212e-04 & 1.9935 & 2.3866e-04 & 1.9931 & 3.34s \\
   & 0.0125& 8.1054e-04 & 1.9982 & 6.3100e-05 & 1.9984 & 5.9737e-05 & 1.9983 & 5.28s \\
   \hline
1.5& 0.1   & 5.4014e-02 &       -& 6.6000e-03 &       -& 5.9641e-03 &      - & 1.76s \\
   & 0.05  & 1.3701e-02 & 1.9790 & 1.6773e-03 & 1.9763 & 1.5190e-03 & 1.9731 & 2.22s \\
   & 0.025 & 3.4379e-03 & 1.9947 & 4.2105e-04 & 1.9941 & 3.8152e-04 & 1.9933 & 3.41s \\
   & 0.0125& 8.6037e-04 & 1.9985 & 1.0537e-04 & 1.9985 & 9.5486e-05 & 1.9984 & 5.46s \\
   \hline
1.8& 0.1   & 5.1174e-02 &       -& 6.3376e-03 &       -& 5.2555e-03 &      - & 1.67s \\
   & 0.05  & 1.2977e-02 & 1.9795 & 1.6217e-03 & 1.9664 & 1.3529e-03 & 1.9578 & 2.12s \\
   & 0.025 & 3.2563e-03 & 1.9946 & 4.0782e-04 & 1.9915 & 3.4072e-04 & 1.9894 & 3.22s \\
   & 0.0125& 8.1506e-04 & 1.9982 & 1.0211e-04 & 1.9978 & 8.5323e-05 & 1.9976 & 5.18s \\
\hline
\end{tabular}
}
\end{table}

\begin{figure}[!htb]
\centering
\includegraphics[width=0.32\linewidth]{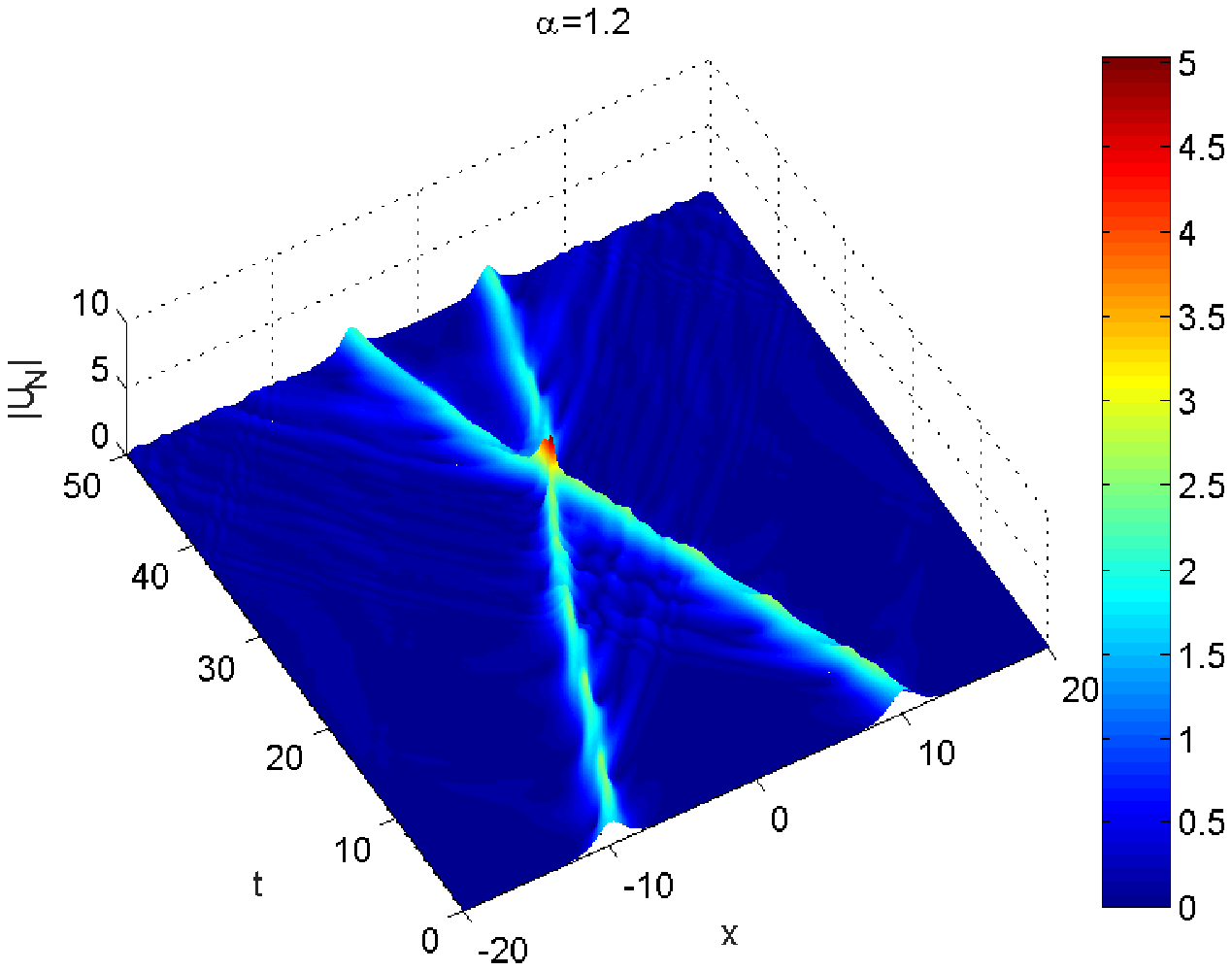}
\includegraphics[width=0.32\linewidth]{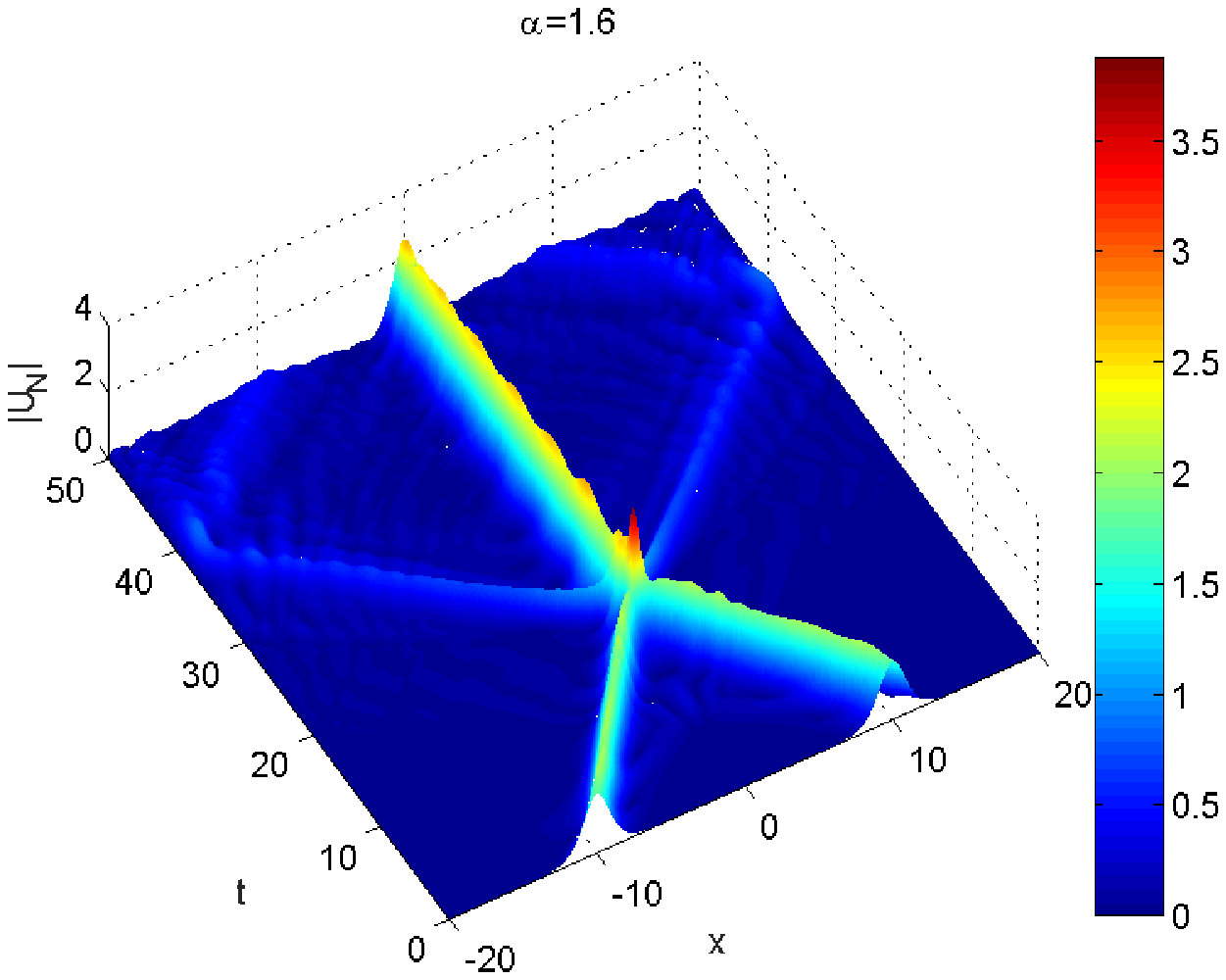}
\includegraphics[width=0.32\linewidth]{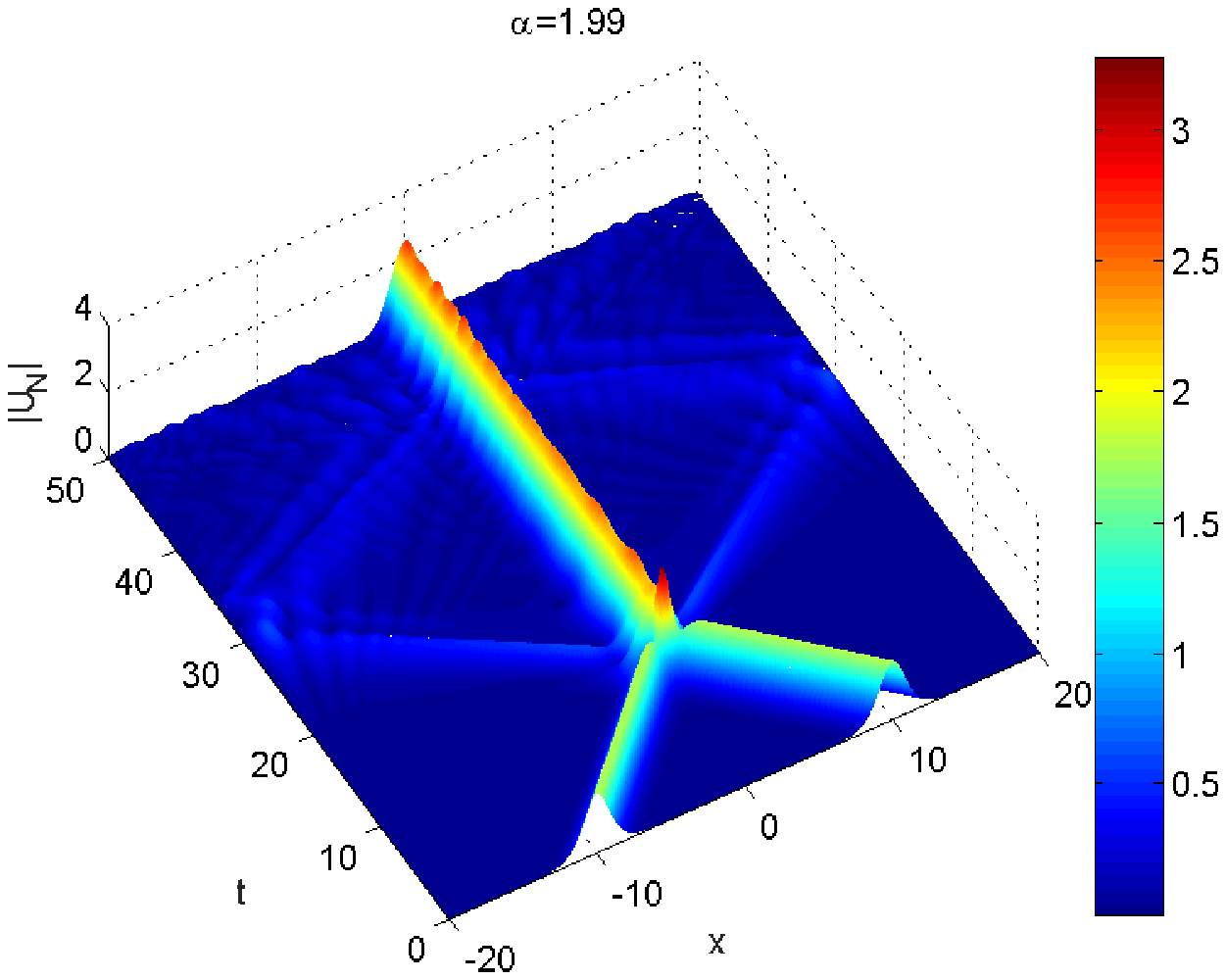}
\includegraphics[width=0.32\linewidth]{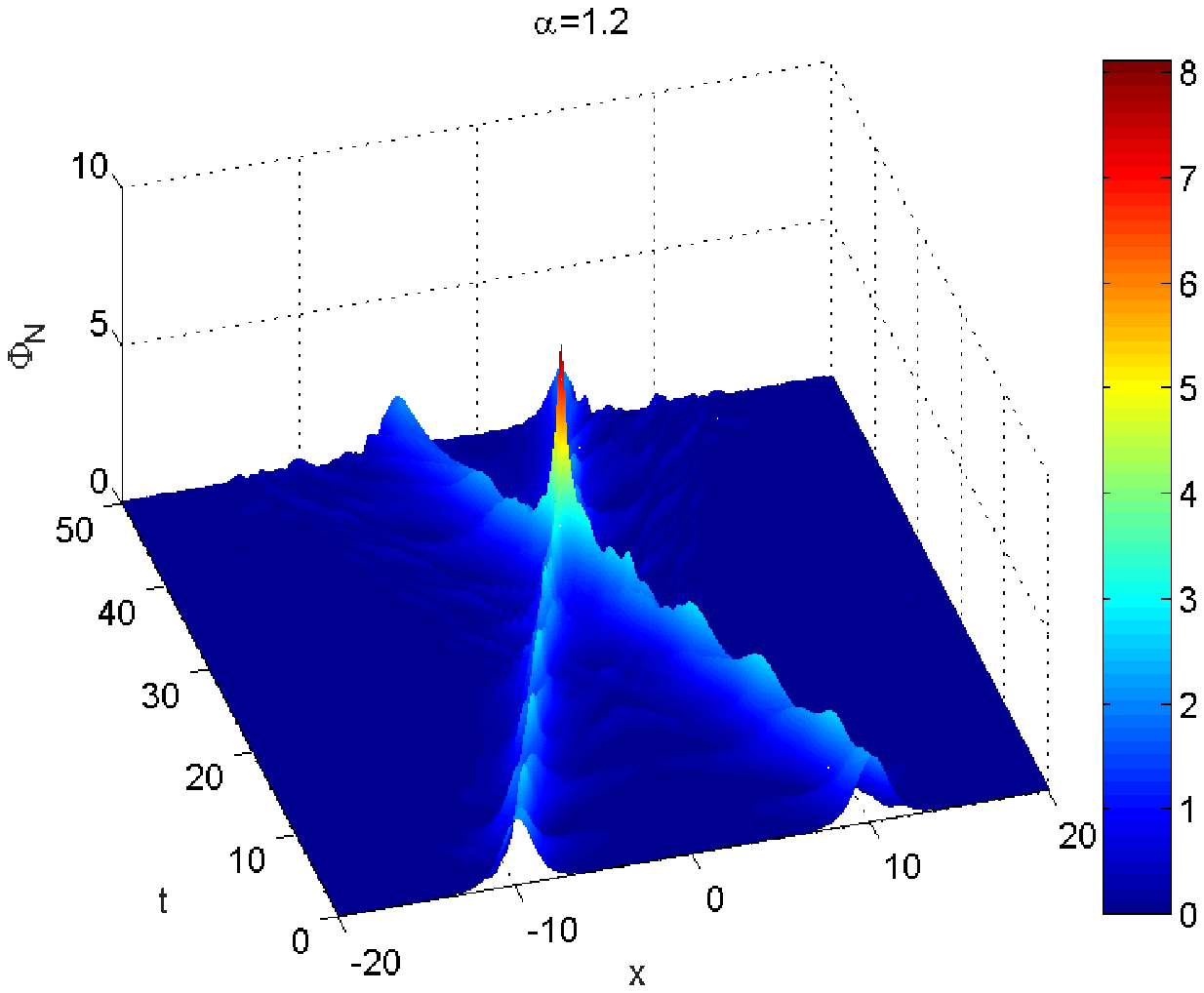}
\includegraphics[width=0.32\linewidth]{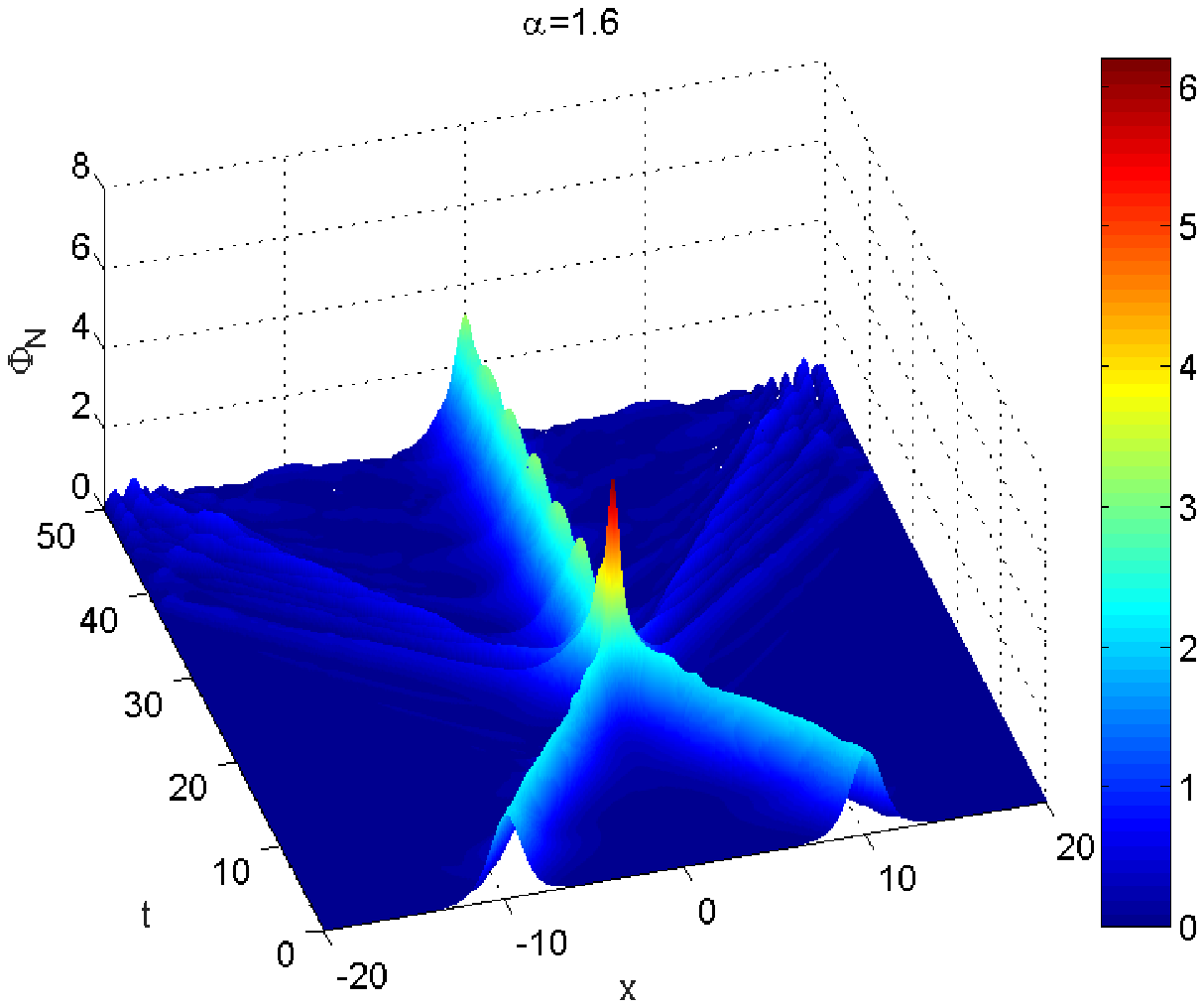}
\includegraphics[width=0.32\linewidth]{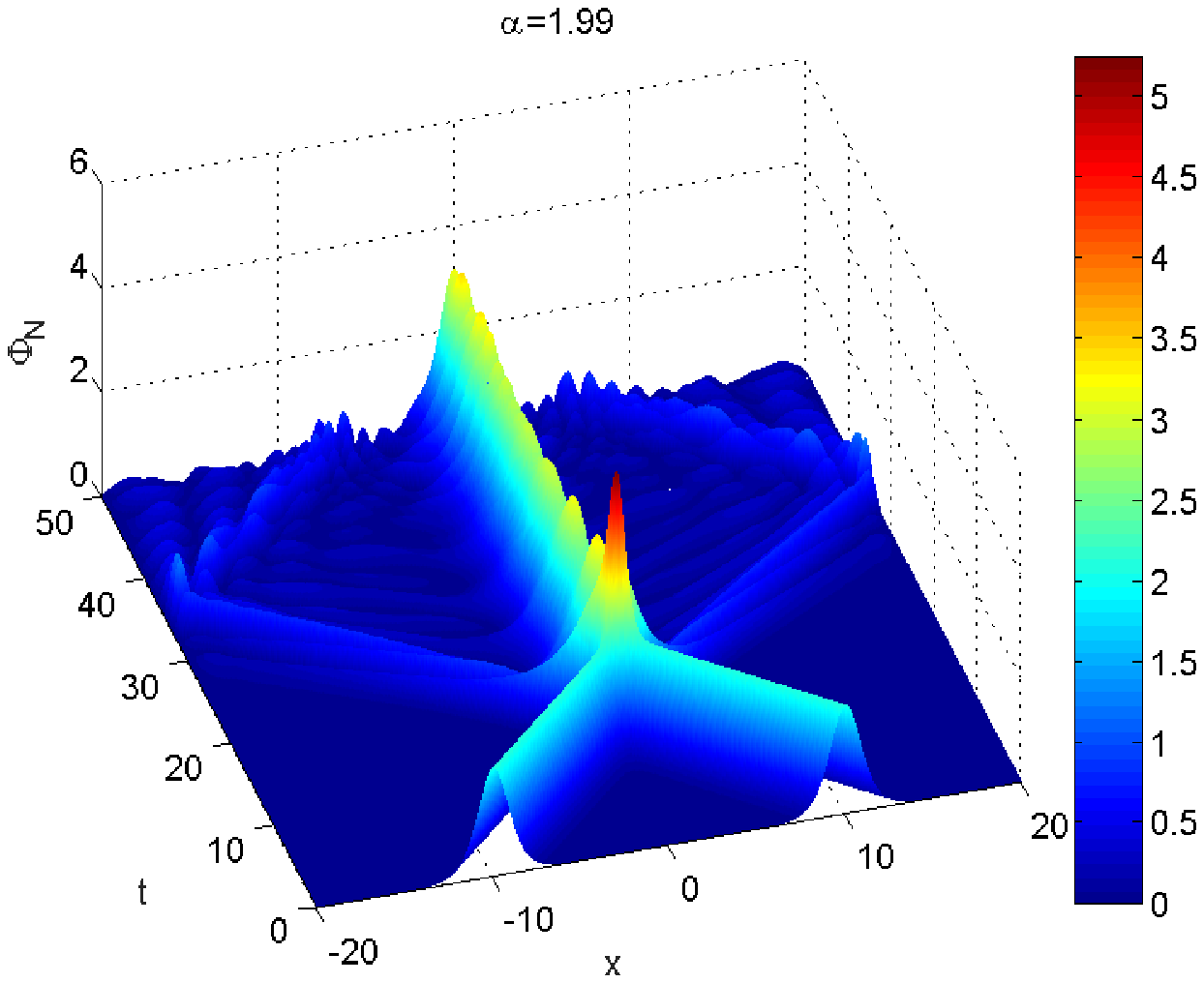}
\caption{\small{Time evolution of numerical solutions of CN--SGM for Example \ref{ex2} with $\tau=1/50$ and $N=200$ for $\kappa_2=0$.}}\label{numer4}
\end{figure}

\begin{figure}[!htb]
\centering
\includegraphics[width=0.49\linewidth]{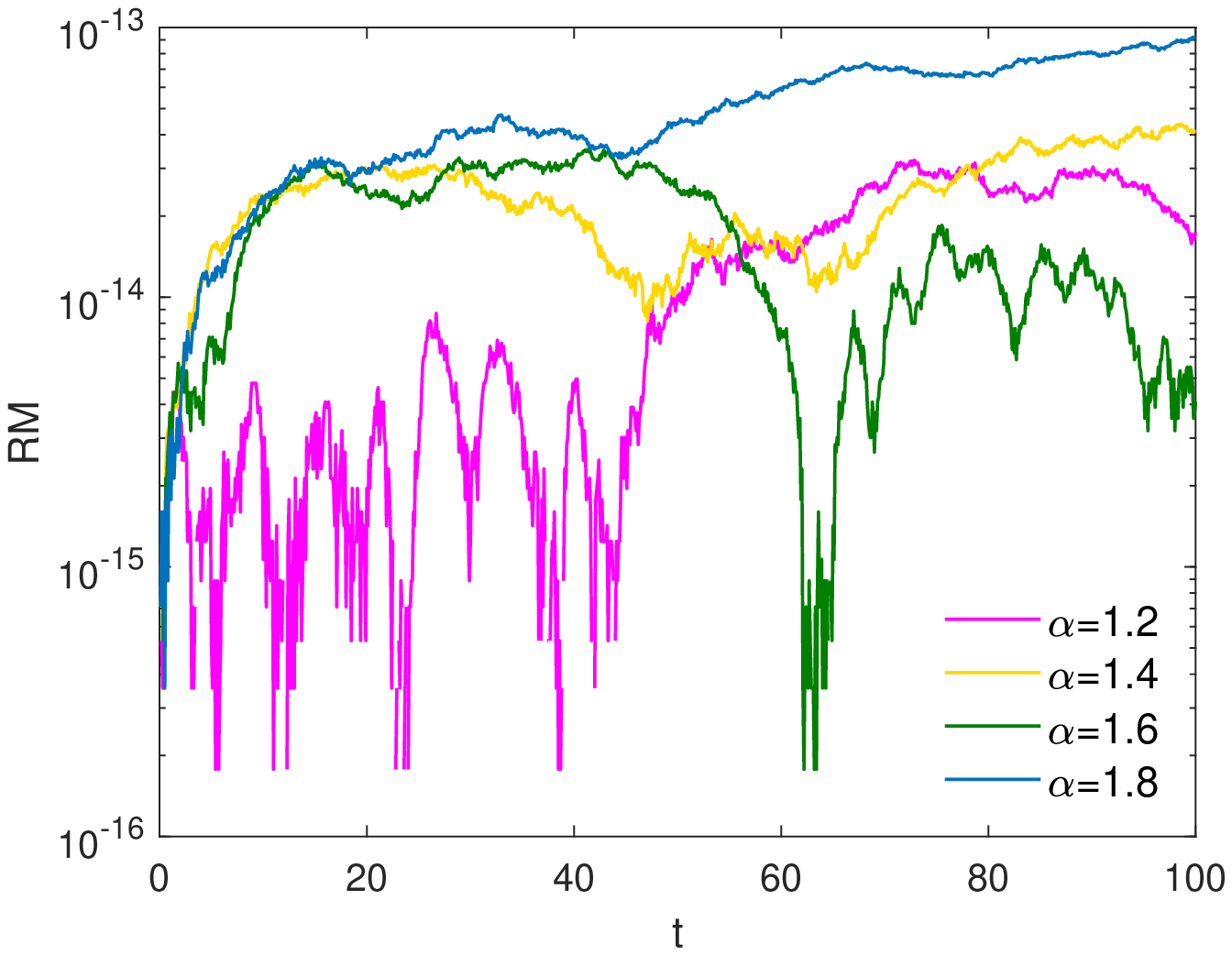}
\includegraphics[width=0.49\linewidth]{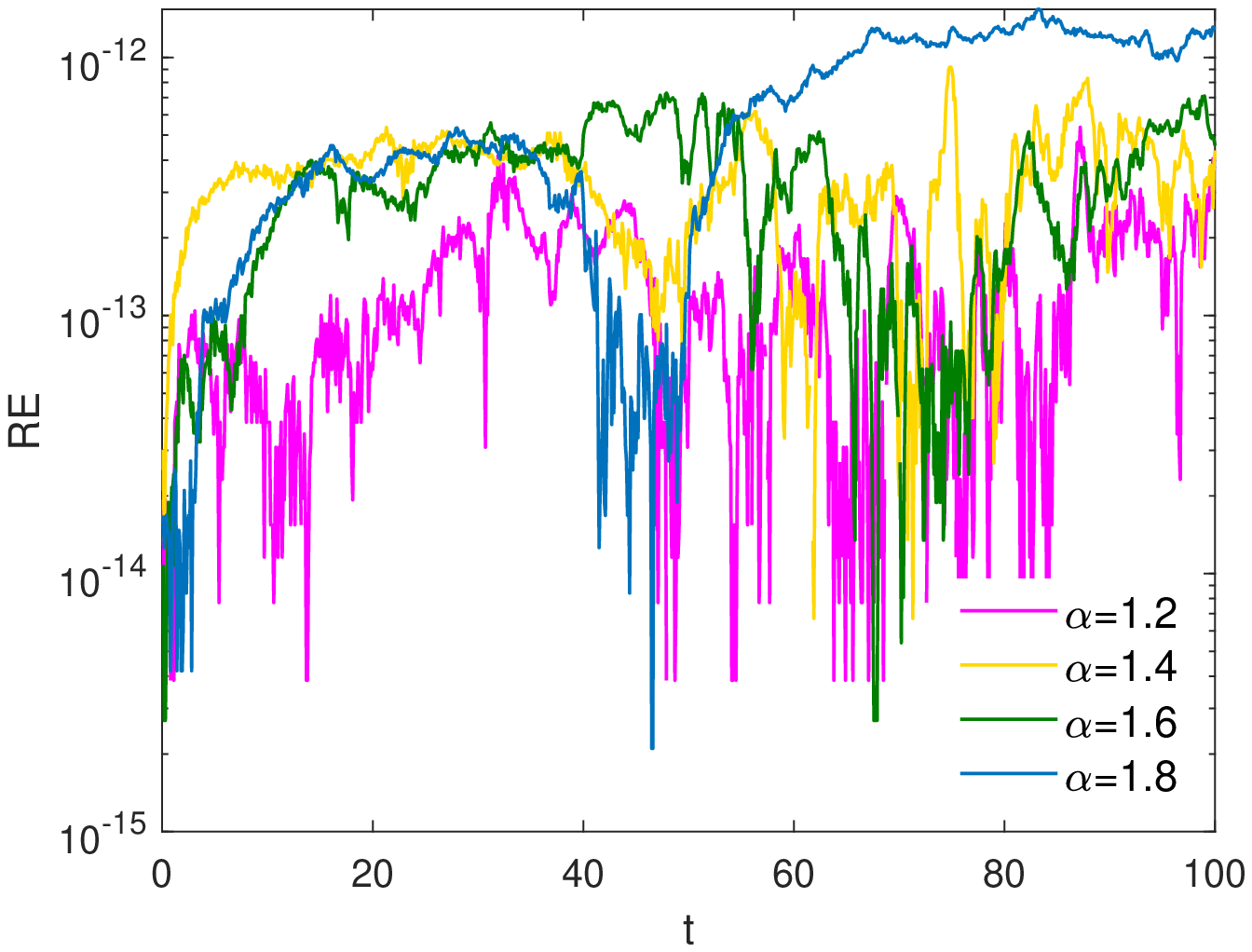}
\caption{\small{Time evolution of the relative mass and energy deviations of CN--SGM for Example \ref{ex2} with $\tau=1/50$ and $N=150$ for $\kappa_2=0.01$.}}\label{numer5}
\end{figure}

\begin{figure}[!htb]
\centering
\includegraphics[width=0.32\linewidth]{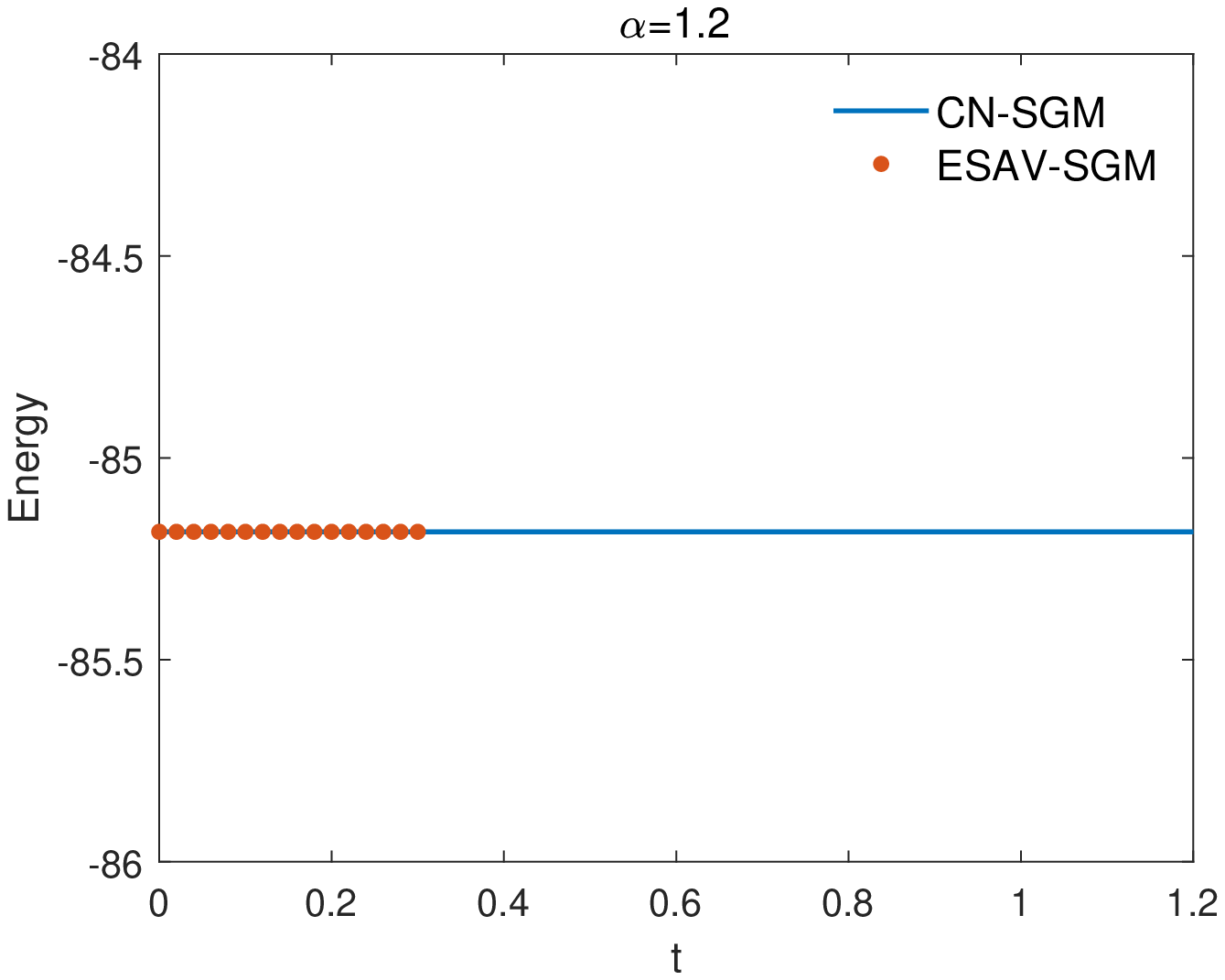}
\includegraphics[width=0.32\linewidth]{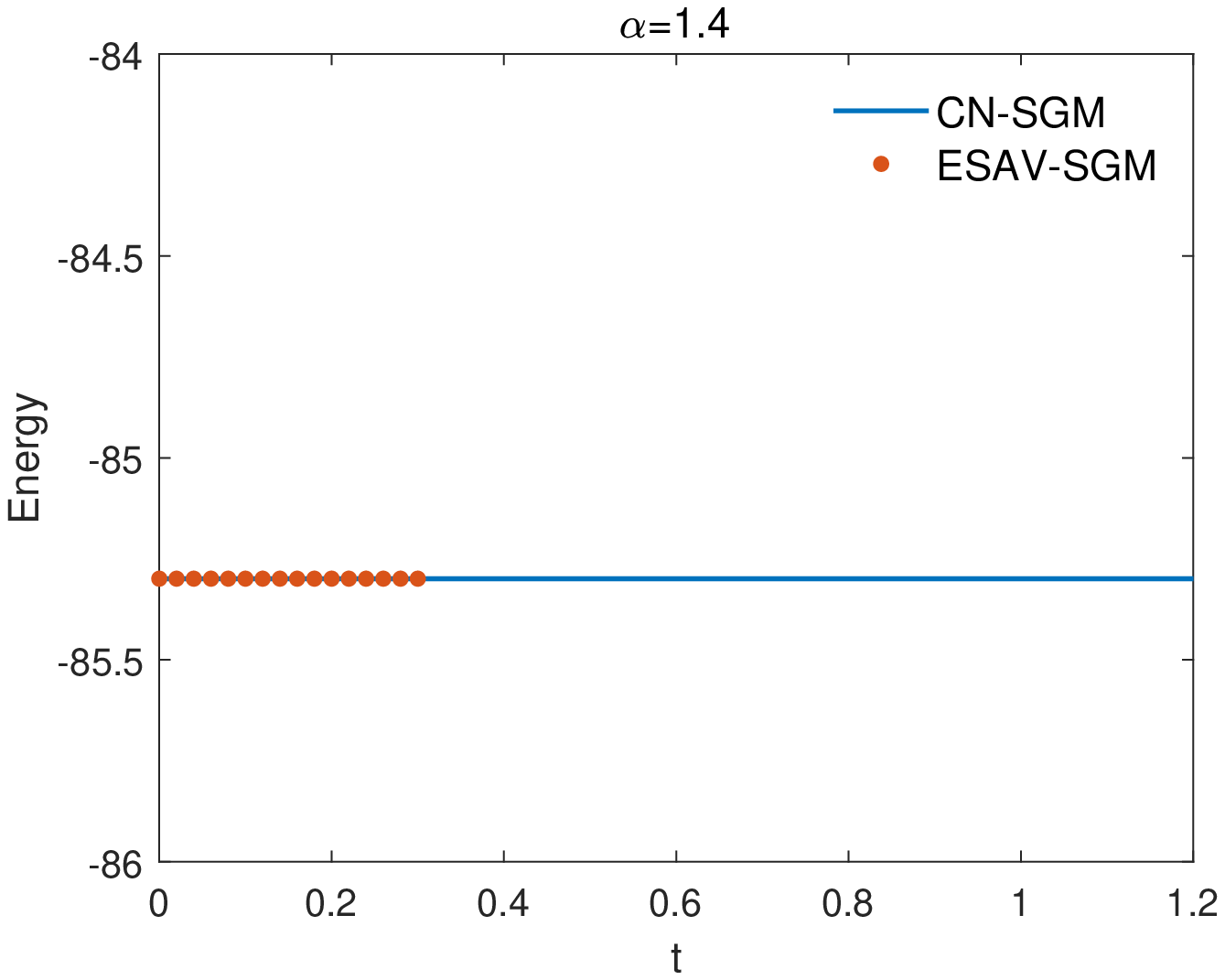}
\includegraphics[width=0.32\linewidth]{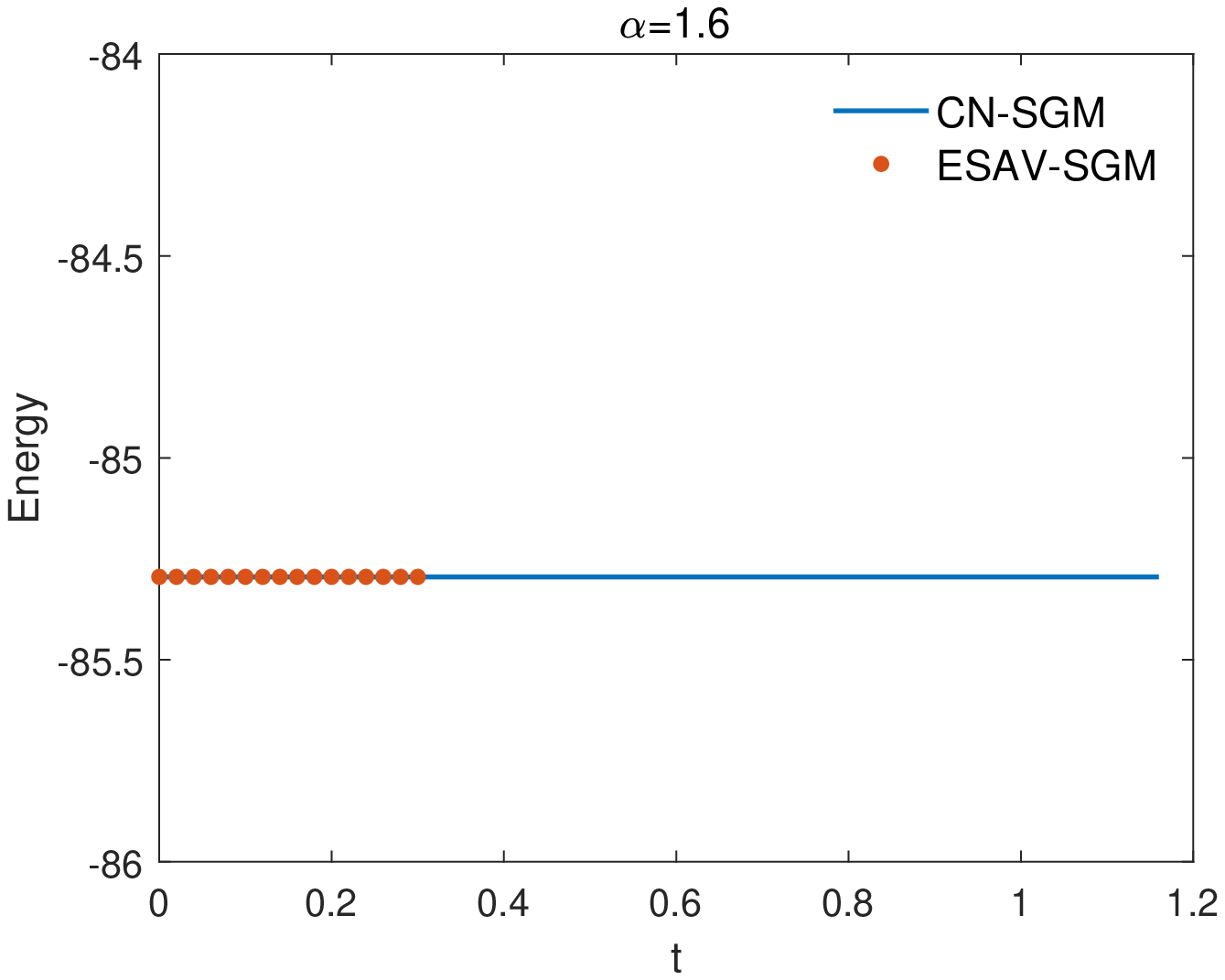}
\caption{\small{Time evolution of the mass and energy of CN--SGM and ESAV--SGM for Example \ref{ex2} with $\tau=1/50$ and $N=150$ for $\kappa_2=1$ till the solutions blow up.}}\label{numer6}
\end{figure}

\section{Conclusions}\label{conclu}
In this paper, we focus on the spectral Galerkin methods for the coupled fractional nonlinear Klein-Gordon-Schr\"odinger equation. We have proposed two structure-preserving schemes, also analyzed the advantages and disadvantages of each other by numerical comparisons.  In theoretical aspects, the maximum-norm boundness of numerical solutions of the CN--SGM are proved, and the unique solvability is obtained with the help of Browder fixed point theorem and maximum-norm boundness of the numerical solutions. Moreover, the unconditional convergence are analyzed without any restriction on grid ratio. Numerical results are reported to verify that the proposed numerical schemes have excellent capabilities in numerical accuracy and long-time conservations. In future work, the convergence proof of the ESAV-SGM will be carried out, also, our schemes and theoretical results will be extended to the high-dimensional FNKGS equation.

\section{Acknowledgements}
This work is partially supported by the National Natural Science Foundation of China (Grant Nos.~12171245, 11971242) and the Research Start-up Foundation of Jiangxi Normal University.

\section{Declarations}
\textbf{Conflict of interest} The authors declare that they have no conflict of interest.

\appendix
\section{Appendix: Proof of Theorem \ref{unique}}\label{app}

The proof of Theorem \ref{unique} is divided into two parts, including the existence and uniqueness.

(I)~\textbf{Existence}:
\begin{proof}
It is worth noting that $U_N^{n+1}=2U_N^{n+1/2}-U_N^{n}$ and $\delta_t\Phi_N^{n+1/2}=\Psi_N^{n+1/2}$. We reformulate the spectral Galerkin scheme \eqref{CNGLS1-1}--\eqref{CNGLS1-3} into the following form
\begin{align}
& (U_N^{n+1/2}-U_N^{n},w_N)+\frac{\lambda\tau}{4}\textrm{i}\mathcal{B}(U_N^{n+1/2},w_N)-\frac{\kappa_1\tau}{2}\textrm{i} (U_N^{n+1/2}\Phi_N^{n+1/2},w_N)\nonumber\\
&\qquad\qquad\qquad\qquad-\frac{\kappa_2\tau}{2} \textrm{i}\Big((|2U_N^{n+1/2}-U_N^{n}|^2+|U_N^{n}|^2)U_N^{n+1/2}\Phi_N^{n+1/2},w_N\Big)=0,\label{app0-1}\\
&(2\Phi_N^{n+1/2}-2\Phi_N^{n}-\tau\Psi_N^{n},w_N)+\frac{\gamma\tau^2}{2}\mathcal{B}(\Phi_N^{n+1/2},w_N)+\frac{\eta^2\tau^2}{2} (\Phi_N^{n+1/2},w_N)\nonumber\\
&\qquad-\frac{\kappa_1\tau^2}{4} \Big((|2U_N^{n+1/2}-U_N^{n}|^2+|U_N^{n}|^2),w_N\Big)-\frac{\kappa_2\tau^2}{4} \Big((|2U_N^{n+1/2}-U_N^{n}|^4+|U_N^{n}|^4),w_N\Big)=0.\label{app0-2}
\end{align}

Now, we carry out proving the existence of $U_N^{n+1/2}$ and $\Phi_N^{n+1/2}$. For convenience, we define the map $\textbf{s}=(s_1,s_2)$, $\mathscr{F}=(\mathscr{F}_1,\mathscr{F}_2):$ $(X_N^0(\Omega),X_N^0(\Omega))\rightarrow (X_N^0(\Omega),X_N^0(\Omega))$, such that
\begin{align}\label{app1}
(\mathscr{F}_1(\textbf{s}),w_N)=&(s_1-U_N^n,w_N)+\frac{\lambda\tau}{4}\textrm{i}\mathcal{B}( s_1,w_N )-\frac{\kappa_1\tau}{2}\textrm{i}(s_1s_2,w_N)\\
&-\frac{\kappa_2\tau}{2}\textrm{i}\Big( ( |2s_1-U_N^n|^2+|U_N^n|^2 )s_1s_2,w_N\Big),\quad \forall w_N\in X_N^0(\Omega)\nonumber
\end{align}
and
\begin{align}\label{app2}
(\mathscr{F}_2(\textbf{s}),w_N)=&(2s_2-2\Phi_N^{n}-\tau\Psi_N^{n},w_N)+\frac{\gamma\tau^2}{2}\mathcal{B}( s_2,w_N )+\frac{\eta^2\tau^2}{2}(
s_2,w_N)\\
&-\frac{\kappa_1\tau^2}{4} \Big(|2s_1-U_N^n|^2+|U_N^{n}|^2,w_N \Big)-\frac{\kappa_2\tau^2}{4} \Big(|2s_1-U_N^n|^4+|U_N^{n}|^4,w_N \Big),\quad \forall w_N\in X_N^0(\Omega).\nonumber
\end{align}

Choosing $w_N=s_1$ in \eqref{app1} and taking the real part, from the Young's inequality, we derive
\begin{align*}
\textrm{Re}(\mathscr{F}_1(\textbf{s}),s_1)=&\textrm{Re}(s_1-U_N^n,s_1)\}=\|s_1\|^2-\textrm{Re}( U_N^n,s_1 )\geq \frac12\|s_1\|^2-\frac12  \|U_N^n\|^2\geq \frac12\|s_1\|^2-\mathcal{C}.
\end{align*}
Setting $w_N=s_2$ in \eqref{app2}, by utilizing the Cauchy-Schwarz inequality, we obtain
\begin{align}\label{app4}
&(\mathscr{F}_2(\textbf{s}),s_2)\geq(2s_2-2\Phi_N^{n}-\tau\Psi_N^{n},s_2)-\frac{\kappa_1\tau^2}{4} \Big(|2s_1-U_N^n|^2+|U_N^{n}|^2,s_2 \Big)-\frac{\kappa_2\tau^2}{4} \Big(|2s_1-U_N^n|^4+|U_N^{n}|^4,s_2 \Big)\nonumber\\
&=2\Big\|s_2\Big\|^2-(2\Phi_N^{n}+\tau\Psi_N^{n},s_2)-\frac{\kappa_1\tau^2}{4} \Big(|2s_1-U_N^n|^2+|U_N^{n}|^2,s_2 \Big)-\frac{\kappa_2\tau^2}{4} \Big(|2s_1-U_N^n|^4+|U_N^{n}|^4,s_2 \Big).
\end{align}
By employing the boundness of the numerical solutions (see Theorem \ref{BOUND}) and the Young's inequality, for sufficient small $\tau$, we conclude
\begin{align}\label{app4-1}
(2\Phi_N^{n}+\tau\Psi_N^{n},s_2)\leq \Big\| 2\Phi_N^{n}+\tau\Psi_N^{n} \Big\| \Big\| s_2 \Big\|\leq  \frac12\Big\| 2\Phi_N^{n}+\tau\Psi_N^{n} \Big\|^2 +\frac12 \Big\| s_2 \Big\|^2,
\end{align}
\begin{align}\label{app4-2}
\frac{\kappa_1\tau^2}{4}\Big(|2s_1-U_N^n|^2+|U_N^{n}|^2,s_2 \Big)&\leq \frac{\kappa_1^2\tau^4}{32} \Big\| |2s_1-U_N^n|^2+|U_N^{n}|^2 \Big\|^2 + \frac{1}{2} \Big\| s_2 \Big\|^2\leq \mathcal{C}+\frac{1}{2} \Big\| s_2 \Big\|^2
\end{align}
and
\begin{align}\label{app4-3}
\frac{\kappa_2\tau^2}{2}\Big(|2s_1-U_N^n|^4+|U_N^{n}|^4,s_2 \Big)\leq \mathcal{C}+\frac{1}{2} \Big\| s_2 \Big\|^2.
\end{align}
Substituting \eqref{app4-1}--\eqref{app4-3} into \eqref{app4}, we arrive at
\begin{align*}
(\mathscr{F}_2(\textbf{s}),s_2)\geq \frac{1}{2} \Big\| s_2 \Big\|^2-\frac{1}{2}\Big\| 2\Phi_N^{n}+\tau\Psi_N^{n} \Big\|^2-\mathcal{C}.
\end{align*}
Therefore, we have
\begin{align*}
\textrm{Re} (\mathscr{F}(\textbf{s}),s_1)&=\textrm{Re} (\mathscr{F}_1(\textbf{s}),s_2)+\textrm{Re} (\mathscr{F}_2(\textbf{s}),s)\nonumber\\
&\geq \frac12 \Big\| \textbf{s} \Big\|^2-\frac12\Big\| 2\Phi_N^{n}+\tau\Psi_N^{n} \Big\|^2-\mathcal{C}
=\frac12\Big(  \Big\| \textbf{s} \Big\|^2 - \Big\| \Big(  2\Phi_N^{n}+\tau\Psi_N^{n},  \sqrt{2\mathcal{C}}\Big) \Big\|^2  \Big).
\end{align*}
Taking $\delta=\Big\| \Big(  2\Phi_N^{n}+\tau\Psi_N^{n},  \sqrt{2\mathcal{C}}\Big) \Big\|$, which satisfies the condition of Lemma \ref{Browder}, we derive
\begin{align*}
\text{Re}( \mathscr{F}(\textbf{s}),\textbf{s}) \geq 0,\quad \forall \textbf{s}:~~\|\textbf{s}\|=\delta.
\end{align*}
This proves the existence of the numerical solution of \eqref{CNGLS1-1}--\eqref{CNGLS1-3}.
\end{proof}

Next, we prove the uniqueness of the numerical solution.

(II)~\textbf{Uniqueness}:
\begin{proof}
We prove the theorem by introduction. It is obvious to find from \eqref{initialvalue} that the numerical solution $(U_N^0,\Phi^0_N,\Psi_N^0)\in (X_N^0(\Omega),X_N^0(\Omega),X_N^0(\Omega))$ exists and is unique. Assume that $(U_N^n,\Phi^n_N,\Psi_N^n)$ is the unique solution of \eqref{CNGLS1-1}--\eqref{CNGLS1-3} for $n=0,1,\cdots,N_t-1$. Next, we prove the uniqueness of the solution $(U_N^{n+1/2},\Phi^{n+1/2}_N)$. Assume there are two solutions $X^{n+1/2}=(X_1^{n+1/2},X_2^{n+1/2})$ and $Y^{n+1/2}=(Y_1^{n+1/2},Y_2^{n+1/2})$ for scheme \eqref{CNGLS1-1}--\eqref{CNGLS1-3}. Then $X_1^{n+1/2}-Y_1^{n+1/2}$ and $X_2^{n+1/2}-Y_2^{n+1/2}$ satisfy \eqref{app1} and \eqref{app2} as follows
\begin{align*}
(\mathscr{F}_1({X}^{n+1/2})-\mathscr{F}_1({Y}^{n+1/2}),X_1^{n+1/2}-Y_1^{n+1/2})=0,\\
(\mathscr{F}_2({X}^{n+1/2})-\mathscr{F}_2({Y}^{n+1/2}),X_2^{n+1/2}-Y_2^{n+1/2})=0.
\end{align*}
By the definition of $\mathscr{F}_1$, we have
\begin{align}\label{unique1}
\Big\| X_1^{n+1/2}-Y_1^{n+1/2} \Big\|^2&+\frac{\lambda\tau}{4} \textrm{i}\mathcal{B}(X_1^{n+1/2}-Y_1^{n+1/2},X_1^{n+1/2}-Y_1^{n+1/2})\nonumber\\
&-\frac{\kappa_1\tau}{2}\textrm{i}(X_1^{n+1/2}X_2^{n+1/2}
-Y_1^{n+1/2}Y_2^{n+1/2},X_1^{n+1/2}-Y_1^{n+1/2})\\
&-\frac{\kappa_2\tau}{2}\textrm{i}\Big( ( |2X_1^{n+1/2}-U_N^n|^2+|U_N^n|^2 )X_1^{n+1/2}X_2^{n+1/2}\nonumber\\
&-( |2Y_1^{n+1/2}-U_N^n|^2+|U_N^n|^2 )Y_1^{n+1/2}Y_2^{n+1/2},X_1^{n+1/2}-Y_1^{n+1/2}\Big)=0.\nonumber
\end{align}
Taking the real part of \eqref{unique1}, by admitting the boundness of the numerical solutions (see Theorem \ref{BOUND}), Lemma \ref{nonlinear} and the Cauchy-Schwarz inequality, we obtain
\begin{align}\label{unique2}
\Big\| X_1^{n+1/2}-Y_1^{n+1/2} \Big\|^2&=-\frac{\kappa_1\tau}{2}\textrm{Im}(X_1^{n+1/2}X_2^{n+1/2}-Y_1^{n+1/2}Y_2^{n+1/2},X_1^{n+1/2}-Y_1^{n+1/2})\nonumber\\
&\quad+\frac{\kappa_2\tau}{2}\textrm{Im}\Big( ( |2X_1^{n+1/2}-U_N^n|^2+|U_N^n|^2 )X_1^{n+1/2}X_2^{n+1/2}\\
&\quad-( |2Y_1^{n+1/2}-U_N^n|^2+|U_N^n|^2 )Y_1^{n+1/2}Y_2^{n+1/2},X_1^{n+1/2}-Y_1^{n+1/2}\Big)\nonumber\\
&\leq \mathcal{C}\tau\Big(\Big\| X_1^{n+1/2}-Y_1^{n+1/2} \Big\|^2+\Big\| X_1^{n+1/2}-Y_1^{n+1/2} \Big\|^2 \Big).\nonumber
\end{align}
Taking into account of the definition of $\mathscr{F}_2$, we get
\begin{align*}
&\Big\| X_2^{n+1/2}-Y_2^{n+1/2} \Big\|^2+\frac{\gamma\tau^2}{4}\mathcal{B}(X_2^{n+1/2}-Y_2^{n+1/2},X_2^{n+1/2}-Y_2^{n+1/2})\nonumber\\
&+\frac{\eta^2\tau^2}{4} (X_2^{n+1/2}-Y_2^{n+1/2},X_2^{n+1/2}-Y_2^{n+1/2})\\
&\qquad-\frac{\kappa_1\tau^2}{8} \Big((|2X_2^{n+1/2}-U_N^{n}|^2)-(|2Y_2^{n+1/2}-U_N^{n}|^2),X_2^{n+1/2}-Y_2^{n+1/2}\Big)\nonumber\\
&-\frac{\kappa_2\tau^2}{8} \Big((|2X_2^{n+1/2}-U_N^{n}|^4)-(|2Y_2^{n+1/2}-U_N^{n}|^4),X_2^{n+1/2}-Y_2^{n+1/2}\Big)=0.\nonumber
\end{align*}
By employing Theorem \ref{BOUND}, Lemma \ref{nonlinear} and the Cauchy-Schwarz inequality, we deduce
\begin{align}\label{unique4}
\Big\| X_2^{n+1/2}-Y_2^{n+1/2} \Big\|^2&\leq\Big\| X_2^{n+1/2}-Y_2^{n+1/2} \Big\|^2+\frac{\gamma\tau^2}{4}\Big|X_2^{n+1/2}-Y_2^{n+1/2}\Big|^2_{\alpha/2}+\frac{\eta^2\tau^2}{4} \Big\|X_2^{n+1/2}-Y_2^{n+1/2}\Big\|^2\nonumber\\
&=\frac{\kappa_1\tau^2}{8} \Big((|2X_2^{n+1/2}-U_N^{n}|^2)-(|2Y_2^{n+1/2}-U_N^{n}|^2),X_2^{n+1/2}-Y_2^{n+1/2}\Big)\nonumber\\
&\qquad+\frac{\kappa_2\tau^2}{8} \Big((|2X_2^{n+1/2}-U_N^{n}|^4)-(|2Y_2^{n+1/2}-U_N^{n}|^4),X_2^{n+1/2}-Y_2^{n+1/2}\Big)\\
&\leq \mathcal{C}\tau^2 \Big\| X_2^{n+1/2}-Y_2^{n+1/2} \Big\|^2.\nonumber
\end{align}
Summing up \eqref{unique2} and \eqref{unique4}, for a sufficient small $\tau$, we arrive at
\begin{align*}
\Big\| X^{n+1/2}-Y^{n+1/2} \Big\|^2&=\Big\| X_1^{n+1/2}-Y_1^{n+1/2} \Big\|^2+\Big\| X_2^{n+1/2}-Y_2^{n+1/2} \Big\|^2\nonumber\\
&\leq \mathcal{C}\tau\Big\| X_1^{n+1/2}-Y_1^{n+1/2} \Big\|^2+\mathcal{C}\tau \Big\| X_2^{n+1/2}-Y_2^{n+1/2} \Big\|^2+\mathcal{C}\tau^2 \Big\| X_2^{n+1/2}-Y_2^{n+1/2} \Big\|^2\nonumber\\
&\leq \mathcal{C}\tau\Big\| X_1^{n+1/2}-Y_1^{n+1/2} \Big\|^2+\mathcal{C}\tau \Big\| X_2^{n+1/2}-Y_2^{n+1/2} \Big\|^2\\
&= \mathcal{C}\tau\Big\| X^{n+1/2}-Y^{n+1/2} \Big\|^2.\nonumber
\end{align*}
By assuming $\mathcal{C}\tau<1$, it leads to $\Big\| X^{n+1/2}-Y^{n+1/2} \Big\|=0$, which implies $X_1^{n+1/2}=Y_1^{n+1/2}$ and $X_2^{n+1/2}=Y_2^{n+1/2}$. This ends the proof of the  uniqueness.
\end{proof}

\end{document}